\newlength{\defbaselineskip} \setlength{\defbaselineskip}{\baselineskip}
\newtheorem{thm}{Theorem}[section]
\newtheorem{cor}[thm]{Corollary}
\newtheorem{lemm}[thm]{Lemma}
\newtheorem{lem}[thm]{Lemma}
\newtheorem{prop}[thm]{Proposition}
\theoremstyle{definition}
\newtheorem{rem}[thm]{Remark}
\tikzset{
  edge node/.code={%
      \expandafter\def\expandafter\tikz@tonodes\expandafter{\tikz@tonodes #1}}}
\tikzset{
  subseteq/.style={
    draw=none,
    edge node={node [sloped, allow upside down, auto=false]{$\subseteq$}}},
  Subseteq/.style={
    draw=none,
    every to/.append style={
      edge node={node [sloped, allow upside down, auto=false]{$\subseteq$}}}
  }
}
 \numberwithin{equation}{section}
\numberwithin{equation}{section} \theoremstyle{definition}
          \newcommand\PP{{\mathbb{P}}}
           \newcommand\F{{\mathcal F}}
          \newcommand\oo{\mathcal O}
          \newcommand\Z{\mathbb{Z}}
\definecolor{zielony}{rgb}{0.5, 0.9, 0.1}
\definecolor{czerwony}{rgb}{0.8, 0.2, 0.1}
\definecolor{niebieski}{rgb}{0.3, 0.1, 0.9}
\newcounter{appendice}
\begin{document}
\title{EPW cubes}
\dedicatory{Dedicated to Piotr Pragacz on the occasion of his 60th birthday.}

\author[A.~Iliev]{Atanas Iliev}
\address{Seoul National University, Department of Mathematics, Gwanak Campus, Bldg. 27, Seoul 151-747, Korea}
\email{ailiev@snu.ac.kr}
\author[G.~Kapustka]{Grzegorz Kapustka}
\address{Institute of Mathematics of the Polish Academy of Sciences, ul. \'Sniadeckich 8, P.O. Box 21, 00-956 Warszawa, Poland}
\address{Jagiellonian University in Krak\'ow, ul. \L ojasiewicza 6, 30-348 Krak\'ow, Poland }
\email{grzegorz.kapustka@uj.edu.pl}
\author[M.~Kapustka]{Micha\l{} Kapustka}
\address{University of  Stavanger, Department of Mathematics and Natural Sciences, NO-4036 Stavanger, Norway}
\address{Jagiellonian University in Krak\'ow, ul. \L ojasiewicza 6, 30-348 Krak\'ow, Poland }
\email{michal.kapustka@uis.no}
\author[K.~Ranestad]{Kristian Ranestad}
\address{University of Oslo, Department of Mathematics, PO Box 1053, Blindern, N-0316 Oslo, Norway}
\email{ranestad@math.uio.no}
\keywords{Irreducible symplectic manifolds, hyperk\"ahler varieties, Lagrangian degeneracy loci}
\subjclass[2000]{14J10,14J40}

\begin{abstract}
We construct a new 20-dimensional family of projective 6-dimensional irreducible holomorphic symplectic manifolds. 
The elements of this family are deformation equivalent with the Hilbert scheme of three points on a K3 surface and are constructed as natural double covers of special codimension $3$ subvarieties of the Grassmanian $G(3,6)$. 
These codimension 3 subvarieties are defined
as Lagrangian degeneracy loci and their construction is parallel to that of EPW sextics, we call them the EPW cubes.
As a consequence we prove that the moduli space of polarized IHS sixfolds of K3-type, Beauville-Bogomolov degree 4 and divisibility 2 is unirational. 
\end{abstract}

\maketitle

\section{Introduction}
By an irreducible holomorphic symplectic (IHS) $2n$-fold we mean
a $2n$-dimensional simply connected compact K\"{a}hler manifold with trivial canonical
bundle that admits a unique (up to a constant)
closed non-degenerate holomorphic $2$-form and is not a product of two
manifolds (see \cite{Beauville}). The IHS manifolds are also known as hyperk\"ahler and irreducible symplectic manifolds, in dimension $2$ they are called $K3$ surface.

Moduli spaces of polarized K3 surfaces are a historically old subject, studied by the classical Italian geometers.
Mukai extended the classical constructions and proved unirationality results for the moduli spaces $\mathcal{M}_{2d}$ parametrizing polarized K3 surfaces of degree $2d$ for many cases with $d \leq19$ see \cite{M1}, \cite{M2}, \cite{M3}. On the other hand it was proven in \cite{GHS} that $\mathcal{M}_{2d}$ is of general type for
$d>61$ and some smaller values. Note that when the Kodaria dimension of such moduli space is positive the generic element of such moduli space is believed to be non-constructible. 

There are only five known descriptions of the moduli space of higher dimensional IHS manifolds (all these examples are deformations equivalent to $K3^{[n]}$). In dimension four we have the following unirational moduli spaces:
\begin{itemize}
\item double EPW sextics with Beauville-Bogomolov degree $q=2$ (see \cite{Ogrady-IHS}),
\item Fano scheme of lines on four dimensional cubic hypersurfaces with $q=6$ (see \cite{BeauvilleDonagi}),
\item $VSP(F,10)$ where $F$ define a cubic hypersurface of dimension $4$ with $q=38$ (see \cite{IlievRanestad}),
\item zero locus of a section of a vector bundle on $G(6,10)$ with $q=22$ described in \cite{DV}.
\end{itemize}
Moreover, there is only one more known family in dimension $8$ with $q=2$ studied in \cite{LehnLehnSorgervanStraten}.
Analogously to the case of $K3$ surfaces there are results in \cite{GritsenkoHulekSankaran} about the Kodaira dimension of the  moduli spaces of polarized IHS fourfolds of $K3^{[2]}$-type: In particular it is proven that such moduli spaces with split polarization of Beauville-Bogomolov degree $q\geq 24$ are of general type
(and for $q=18,22$ are of positive Kodaira dimension). We expect that the number of constructible families in higher dimension becomes small.

According to O'Grady \cite{Ogrady-IHS}, the $20$-dimensional family of natural double covers of special sextic hypersurfaces in $\PP^5$ (called EPW sextics) gives a maximal dimensional family
of polarized IHS fourfold deformation equivalent to the Hilbert scheme of two points on a $K3$-surface (this is a maximal dimensional family since $b_2(S^{[2]})=23$ for $S$ a $K3$-surface). 
%Only a few other maximal dimensional families of polarized IHS manifolds are known, see \cite{BeauvilleDonagi, DV, IlievRanestad, LehnLehnSorgervanStraten}. 
 Our aim is to perform a construction parallel to that of O'Grady to obtain a unirational $20$-dimensional family (also of maximal dimension) of polarized IHS sixfolds deformation equivalent to the Hilbert scheme of three points on a $K3$-surface (i.e.~of $K3^{[3]}$ type).
The elements of this family are natural double covers of special codimension $3$ subvarieties of the Grassmannian $G(3,6)$ that we call EPW cubes. 

Let us be more precise. Let $W$ be a complex $6$-dimensional vector space.
 We fix an isomorphism  $j: \wedge^6 W\to \mathbb{C}$ and the skew symmetric form
\begin{equation}\label{def of eta}\textstyle 
\eta: \wedge^3W\times \wedge^3 W\to \mathbb{C}, \quad (u,v)\mapsto j(u\wedge v).
 \end{equation} %Let $A$ be a ten dimensional Lagrangian space $A\subset \wedge^3W$ with respect to this form. 
 We denote by $LG_\eta(10, \wedge^3W)$  the variety of $10$-dimensional Lagrangian subspaces of $\wedge^3W$ with respect to $\eta$.
 For any $3$-dimensional subspace $U\subset W$, the $10$-dimensional subspace 
 \[\textstyle 
 T_U:=\wedge^2U\wedge W\subset \wedge^3W
 \]
 belongs to $LG_\eta(10, \wedge^3W)$, and $\PP(T_U)$ is the projective tangent space to $$G(3,W)\subset \PP(\wedge^3 W)$$ at $[U]$.
 
 For any  $[A]\in LG_\eta(10, \wedge^3W)$ and $k\in \mathbb{N}$, %be any $10$-dimensional Lagrangian space $A\subset \wedge^3W$. 
 we consider the following Lagrangian degeneracy locus, with natural scheme structure (see \cite{PragaczRatajski}),
 \[
 D_k^A=\{[U]\in G(3,W)\;|\; \dim A\cap T_U\geq k\}\subset G(3,W).
 \]
% with 
% $D_k^A$ be the scheme (see \cite{PragaczRatajski}) supported on the set of points $[U]\in G(3,W)$ such that $\PP(T_U)$ intersects $\PP(A)$ along a $(k-1)$-dimensional linear space.
For the fixed $[A]\in LG_\eta(10, \wedge^3W)$ we call  the scheme $D^A_2$ an \emph{EPW cube}. We prove that if $A$ is generic then $D_2^A$ is a sixfold singular only along the threefold  $D_3^A$ and that $D_4^A$ is empty. Moreover, $D_3^A$ is smooth such that the
singularities of $D_2^A$ are transversal $\frac{1}{2}(1,1,1)$ singularities along $D_3^A$.

Before we state our main theorem we shall need some more notation.
 The projectivized representation $\wedge^3$ 
 of $PGL(W)$ on $\wedge^3 W$
splits ${\PP}^{19} = {\PP}(\wedge^3 W)$ into a disjoint union of 4 orbits
$${\PP}^{19} = ({\PP}^{19}\setminus W) \cup (F\setminus  \Omega) \cup (\Omega \setminus G(3,W)) \cup G(3,W),$$
where $G(3,W) \subset \Omega \subset F \subset {\PP}^{19}$,
$\dim(\Omega) = 14$, $\operatorname{Sing}(\Omega) = G(3,W)$,
$\dim(F) = 18$, $\operatorname{Sing}(F) = \Omega$, see \cite{Donagi}.
We call the invariant sets $G,\Omega,F$ and ${\PP}^{19}$ the (projective)
orbits of $\wedge^3$ for $PGL(6)$. See \cite[Appendix]{GKapustkab23} for some results about the geometry 
of $\Omega$ and its relations with EPW sextics.
For any nonzero vector $w\in W$, denote by
$$F_{[w]}=\langle w\rangle \wedge (\wedge^2 W)$$ the 10-dimensional subspace of $\wedge^3 W$, such that 
$$\bigcup_{[w]\in \PP(W)}\PP(F_{[w]})= \Omega \subset \PP(\wedge^3 W).$$
We denote, after O'Grady \cite{EPWMichigan}, 
$$\Sigma =\{ [A]\in LG_\eta(10,\wedge^3W)|\quad  \PP(A)\cap G(3,W)\not=\emptyset \}$$ 
and
 $$\Delta=\{ [A]\in LG_\eta(10,\wedge^3W)| \quad \exists w\in W \colon \dim A\cap F_{[w]}\geq 3 \}. $$
We also consider a third subset
$$\Gamma= \{A\in LG_\eta(10,\wedge^3W)|\quad  \exists [U]\in G(3,W) \colon \dim A\cap T_U\geq 4\}.$$ 
 Denote by $$LG_\eta^1(10,\wedge^3W):=LG_\eta(10,\wedge^3W)\setminus (\Sigma\cup \Gamma).$$
All three subsets $\Sigma$, $\Delta$, $\Gamma$ are divisors (see \cite{EPWMichigan} and Lemma \ref{Gamma  div}) and $LG_\eta^1(10,\wedge^3W)$ is hence a dense open subset of $LG_\eta(10,\wedge^3W)$.
 Our main result is the following:
\begin{thm}\label{main}
 If $[A]\in LG^1_\eta(10, \wedge^3W)$, then there exists a natural double cover $Y_A$ of the EPW cube $D_2^A$ branched along its singular locus $D_3^A$ such that
 $Y_A$ is an IHS sixfold of $K3^{[3]}$-type with polarization of Beauville-Bogomolov degree $q=4$ and divisibility $2$. In particular, the moduli space of polarized IHS sixfolds of $K3^{[3]}$-type, Beauville-Bogomolov degree 4 and divisibility 2 is unirational. 
\end{thm}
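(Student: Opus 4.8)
The plan is to run a parallel to O'Grady's analysis of double EPW sextics, replacing the universal hyperplane-section setup on $\PP^5$ with the Lagrangian degeneracy data on $G(3,W)$. First I would construct the double cover $Y_A \to D_2^A$ sheaf-theoretically: on $G(3,W)$ one has the tautological complex of vector bundles computing the degeneracy loci $D_k^A$, and the cokernel sheaf $\mathcal{F}_A$ supported on $D_2^A$ carries, away from $D_3^A$, a rank-one structure; the key point is to produce a commutative multiplication $\mathcal{F}_A \otimes \mathcal{F}_A \to \mathcal{O}$ (a symmetric pairing valued in an appropriate line bundle) coming from the Lagrangian condition $\eta(A,A)=0$, exactly as in \cite{Ogrady-IHS} and \cite{EPWMichigan}. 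This makes $\mathcal{O}_{D_2^A}\oplus \mathcal{F}_A$ (suitably twisted) into a sheaf of algebras whose $\operatorname{Spec}$ is $Y_A$, automatically ramified along the locus where $\mathcal{F}_A$ drops rank, i.e.\ $D_3^A$. The hypothesis $[A]\in LG^1_\eta$, i.e.\ $\PP(A)\cap G(3,W)=\emptyset$ and $\dim A\cap T_U\le 3$ everywhere, is precisely what forces $D_2^A$ to be a normal sixfold with the stated transversal $\tfrac12(1,1,1)$ singularities along the smooth threefold $D_3^A$ and $D_4^A=\emptyset$; one then checks that the double cover resolves these quotient singularities, so that $Y_A$ is smooth.

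Next I would prove that $Y_A$ is an IHS manifold. Smoothness gives a projective sixfold; triviality of the canonical bundle and the holomorphic symplectic form should follow by adjunction together with an explicit computation of $K_{D_2^A}$ from the degeneracy-locus structure (Thom--Porteous / the Eagon--Northcott resolution of $\mathcal{F}_A$) and the ramification formula for the double cover — the branch divisor and the canonical class conspire so that $K_{Y_A}=0$. For simple connectedness and the $b_2$, $h^{2,0}$ computations I would argue by deformation: exhibit one special $A_0$ for which $Y_{A_0}$ is visibly of $K3^{[3]}$-type (or degenerates to something whose topology is computable), then use that $LG^1_\eta$ is connected and the family $\{Y_A\}_{[A]\in LG^1_\eta}$ is smooth and proper over it, so all $Y_A$ are deformation equivalent; $K3^{[3]}$-type is a deformation-invariant notion, which pins down $H^2$ and the Fujiki relations. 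A clean choice for $A_0$ is one adapted to a hyperplane in $W$ or one built from a K3 surface so that the EPW cube degenerates and its double cover is recognizably a known hyperk\"ahler sixfold.

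Then I would identify the polarization. The natural ample class on $Y_A$ is the pullback $H$ of the Pl\"ucker-type class on $D_2^A\subset G(3,W)$; to compute its Beauville--Bogomolov square one uses the Fujiki relation $q(H)^3 \cdot c = \deg(H^6)$ (with Fujiki constant $c$ for $K3^{[3]}$-type known) together with a direct intersection-theoretic computation of $\deg D_2^A$ in $G(3,W)$ via the Chern classes of the bundles in the degeneracy complex, doubled. This should yield $q(H)=4$. Divisibility $2$ means $H$ is twice a generator modulo the sublattice it generates' orthogonality constraints, equivalently $(H,H^2(Y_A,\Z)) \subset 2\Z$; this I would get from the same deformation/monodromy argument, matching against the list of $K3^{[3]}$-type lattices, or by exhibiting a covering-involution eigenspace decomposition showing the Pl\"ucker class is non-primitive of the expected index. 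Finally, unirationality of the moduli space follows formally: the open set $LG^1_\eta$ is a dense open subset of the rational variety $LG_\eta(10,\wedge^3W)$, the assignment $[A]\mapsto (Y_A,H)$ gives a dominant $\operatorname{PGL}(W)$-invariant map to the moduli space (dominance because the $20$-dimensional source surjects onto a $20$-dimensional family of maximal dimension, $b_2=23$ leaving $21-1=20$ moduli), hence the moduli space is dominated by a rational variety.

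The main obstacle is establishing the local structure of $D_2^A$ along $D_3^A$ and the consequent smoothness of the double cover: one must show that at every point of $D_3^A$ the degeneracy locus $D_2^A$ looks analytically like $\C^3\times(\text{cone over the Veronese})$, i.e.\ a transversal $\tfrac12(1,1,1)$ singularity, which requires a careful local analysis of the Lagrangian pairing restricted to the relevant $4$-dimensional space $A\cap T_U$ and the vanishing of the obstruction that would produce worse (non-$\tfrac12(1,1,1)$) singularities — this is exactly where the conditions $\Gamma$ (no $\dim A\cap T_U\ge 4$) and $\Sigma$ (no point of $\PP(A)$ on $G(3,W)$) are used, and getting the normal form right, together with the fact that the canonical resolution of $\tfrac12(1,1,1)$ by a double cover keeps $K=0$, is the technical heart of the argument; the deformation-invariance input (connectedness of $LG^1_\eta$ and existence of one good $A_0$) is the other place where care is needed, since one must rule out that the family leaves the $K3^{[3]}$-deformation class.
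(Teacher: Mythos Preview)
Your outline matches the paper's large-scale architecture (construct the cover, show $K=0$ and smoothness when $[A]\notin\Sigma\cup\Gamma$, specialize to identify the deformation type, compute $q$ via Fujiki and the degree of $D_2^A$, get divisibility from the covering involution, deduce unirationality from the rational parameter space). Two points deserve comment.

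\textbf{A different route to the double cover.} You propose the sheaf-theoretic construction \`a la O'Grady, turning $\mathcal{O}\oplus\mathcal{F}_A$ into an algebra via a Lagrangian pairing. The paper does not do this. Instead it passes to the incidence resolution $\tilde{D}_2^A\subset G(3,W)\times G(2,A)$, computes $K_{\tilde{D}_2^A}$ by embedding $\tilde{D}_2^A$ into the Lagrangian-Grassmannian bundle $\tilde{\mathbb{D}}_2^A\to G(2,A)$ and pulling back, shows the exceptional divisor satisfies $E=2K_{\tilde{D}_2^A}$, takes the smooth double cover of $\tilde{D}_2^A$ branched along $E$, and then contracts. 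Your sheaf approach is plausible but not automatic in codimension three; the paper's blow-up/contract route reduces everything to concrete Chern-class identities and Mori-theoretic contractions. (Minor slip: along $D_3^A$ one has $\dim(A\cap T_U)=3$, not $4$; dimension $\ge 4$ is exactly what $[A]\notin\Gamma$ rules out.)

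\textbf{A gap at the specialization step.} You say one should ``exhibit one special $A_0$ for which $Y_{A_0}$ is visibly of $K3^{[3]}$-type \dots\ built from a K3 surface'', and this is where the paper puts most of its work, but the mechanism is not a degeneration. For $[A_0]\in\Delta\setminus(\Sigma\cup\Gamma)$ the paper associates a genus-6 K3 surface $S_{A_0}$ and constructs an explicit rational $2{:}1$ map $\psi:S_{A_0}^{[3]}\dashrightarrow D_2^{A_0}$; the crucial extra input is that the indeterminacy and ramification loci of $\psi$ have codimension $\ge 2$, so that over a large open $\mathcal{D}\subset D_2^{A_0}\setminus D_3^{A_0}$ both $\psi$ and $f_{A_0}:Y_{A_0}\to D_2^{A_0}$ restrict to connected \'etale double covers. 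Since $S_{A_0}^{[3]}$ minus codimension $2$ is simply connected, both are the universal cover of $\mathcal{D}$, hence $Y_{A_0}$ and $S_{A_0}^{[3]}$ are \emph{birational}. From this one gets simple connectedness of $Y_{A_0}$ and $h^{2,0}=1$ (equal Hodge numbers for birational smooth varieties with trivial canonical class), so Beauville's decomposition gives IHS, and Huybrechts' theorem on birational IHS manifolds gives $K3^{[3]}$-type. Your outline does not supply this bridge; note that $Y_{A_0}$ is still smooth at $A_0$, so no degeneration is occurring --- the link to $S_{A_0}^{[3]}$ is genuinely birational and has to be built by hand.

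Your divisibility argument is essentially the paper's: the covering involution is anti-symplectic, hence acts on $H^2$ by $v\mapsto -v+\tfrac12(v,[P])[P]$, forcing $(v,[P])\in 2\Z$.
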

We prove the theorem in Section \ref{proof} at the very end of the paper. 
 The plan of the proof is the following: 
 In Proposition \ref{existence and smoothness of double cover} we prove that for $[A]\in LG_\eta^1(10,\wedge^3W),$ the variety $D_2^A$ is singular only along the locus $D_3^A$ and that it admits a smooth double cover $Y_{A} \to D_2^A$ branched along $D_3^A$ 
%that is smooth and has 
with a trivial canonical class. The proof of the Proposition is based on a general study of Lagrangian degeneracy loci contained in Section \ref{general lagrangian loci}.  By globalizing the construction of the double cover to the whole affine variety $LG_\eta^1(10,\wedge^3W)$ we obtain a smooth family 
$$\mathcal{Y}\to LG^1_{\eta}(10,\wedge^3 W)$$
 with fibers $\mathcal{Y}_{[A]}=Y_A$. Note that the family $\mathcal{Y}$ is naturally a family of polarized varieties with the polarization given by the divisors defining the double cover.

In Lemma \ref{distinct delta gamma} we prove that $\Delta  \setminus (\Gamma\cup \Sigma)$ is nonempty. Following \cite[Section 4.1]{EPWMichigan}, we associate to a general $[A_0]\in\Delta  \setminus (\Gamma\cup \Sigma)$ a K3 surface $S_{A_0}$.   Then, in Proposition \ref{prop specialA}, we prove that
%for $[A]\in\Delta  \setminus (\Gamma\cup \Sigma)$  
there exists a rational $2:1$ map from the Hilbert scheme $S_{A_0}^{[3]}$ of length 3 subschemes on $S_{A_0}$ to the EPW cube $D_2^{A_0}$. We infer in Section \ref{proof} that in this case the sixfold $Y_{A_0}$ is birational to $S^{[3]}_{A_0}$.  Together with the fact that $Y_{A_0}$ is smooth, irreducible and has trivial canonical class, this proves that $Y_{A_0}$ is IHS. 
  
Since flat deformations of IHS manifolds are still IHS,  the family $\mathcal{Y}$ is a family of smooth IHS sixfolds.
The fact that the obtained IHS manifolds are of  $K3^{[3]}$-type is a straightforward consequence of Huybrechts theorem \cite[Thm.~4.6]{Huybrechts}. 

During the proof of Theorem \ref{main} we retrieve also some information on the constructed varieties. We prove in Section \ref{invariants} that the polarization $\xi$ giving the double cover $Y_A\to D_2^A$ has Beauville-Bogomolov degree $q(\xi)=4$ and is primitive. Moreover, the degree of an EPW cube $D^A_2\subset G(3,6)\subset\PP^{19}$  is $480$. 

Note that  the coarse moduli space $\mathcal{M}$ of polarized  IHS sixfolds of $K3^{[3]}$-type and Beauville-Bogomolov degree $4$ has two components distinguished by divisibility. 
We conclude the paper by proving that the image of the moduli map $LG^1_{\eta}(10,\wedge^3 W) \to \mathcal{M}$ defined by $\mathcal{Y}$ is a 20 dimensional open and dense subset of the component of $\mathcal{M}$ corresponding to divisibility 2 (see Proposition \ref{divisibility}).

{\bf Acknowledgements.} We thank Olivier Debarre, Alexander Kuznetsov and Kieran O'Grady for useful comments, O'Grady in particular for pointing out a proof of Proposition \ref{divisibility}. A. Iliev was supported by SNU grant 0450-20130016, G. Kapustka by NCN grant  2013/08/A/ST1/00312, M. Kapustka by NCN grant 2013/10/E/ST1/00688 and K. Ranestad by RCN grant 239015.

%It is a natural problem to study the degenerations of the  IHS fourfold (double EPW sextics) and 
%sixfold (double EPW cubes)  corresponding to $A\in \Gamma$; this will be addressed in a future paper.
  
%\input{Lagrangian}

 \begin{section}{Lagrangian degeneracy loci} \label{general lagrangian loci}
 In this section we study resolutions of Lagrangian degeneracy loci.
 Let us start with fixing some notation and definitions. We fix a vector space $W_{2n}$ of dimension $2n$ and a symplectic form $\omega\in \wedge^2 W_{2n}^*$.
 Let $X$ be a smooth manifold and let $\mathcal{W}=W_{2n}\times \mathcal{O}_X$ be the trivial bundle with fiber
 $W_{2n}$ on $X$ equipped with a nondegenerate symplectic form $\tilde{\omega}$ induced on each fiber by $\omega$.
 Consider $J\subset \mathcal{W}$ a Lagrangian vector subbundle i.e. a subbundle of rank $n$ whose fibers are isotropic
 with respect to $\tilde{\omega}$. Let $A\subset W_{2n}$ be a Lagrangian vector subspace inducing a trivial subbundle
 $\mathcal{A}\subset\mathcal{W}$.
 For each $k\in \mathbb{N}$ we define the set
 $$D^A_k=\{x\in X | \dim (J_x\cap \mathcal{A}_x)\geq k\}\subset X$$
 where $J_x$ and $\mathcal{A}_x$ denote the fibers of the bundles $J$ and $\mathcal{A}$ as subspaces in the fiber $\mathcal{W}_x$.
 Let us now define $LG_{\omega}(n,W_{2n})$ to be the Lagrangian Grassmannian parametrizing all subspaces of $W_{2n}$ which
 are Lagrangian with respect to $\omega$. Then $J$ defines a map $\iota:X \to LG_{\omega}(n,W_{2n})$ in such a way that
 $J=\iota^*\mathcal{L}$ where $\mathcal{L}$ denotes the tautological bundle on the Lagrangian Grassmannian $LG_{\omega}(n,W_{2n})$.
 Moreover, similarly as on $X$, we can define
 $$\mathbb{D}^A_k=\{[L]\in LG_{\omega}(n,W_{2n}) | \dim (L\cap A_{[L]})\geq k\}\subset LG_{\omega}(n,W_{2n}),$$
and $\mathbb{D}^A_k$ admits a natural scheme structure as a degeneracy locus.
 We then have $D^A_k=\iota^{-1} \mathbb{D}^A_k$, i.e. the scheme structure on $D^A_k$ is defined by the inverse image of the
 ideal sheaf of $\mathbb{D}^A_k$  \cite[p.163]{Hartshorne}.

 \subsection{Resolution of $\mathbb{D}^A_k$ }
 For each $k\in \mathbb{N}$, let $G(k,A)$ be the Grassmannian of $k$-dimensional subspaces of $A$ and let
 \[
 \tilde{\mathbb{D}}^A_k=\{([L],[U])\in LG_{\omega}(n,W_{2n})\times G(k,A) | L\supset U\}.
 \]
 By \cite{PragaczRatajski}, $\tilde{\mathbb{D}}^A_k$ is a resolution of $\mathbb{D}^A_k$.
 We shall describe the above variety more precisely.
 First of all we have the following incidence described more generally in \cite{PragaczRatajski}:

 \begin{center}
  \begin{tikzpicture}
  \matrix (m) [matrix of math nodes,row sep=3em,column sep=2em,minimum width=1em]
  { & \tilde{\mathbb{D}}^A_k&   \\
     \mathbb{D}_k^A & & G(k,A) \\};
  \path[-stealth]
    (m-1-2) edge node [left] {$\phi$} (m-2-1)
            edge node [right] {$\pi$} (m-2-3);
\end{tikzpicture}
\end{center}
The projection $\phi$ is clearly birational, whereas $\pi$ is a fibration with fibers isomorphic to a Lagrangian Grassmannian $LG(n-k,2n-2k)$.
In particular $\tilde{\mathbb{D}}^A_k$ is a smooth manifold of Picard number two with Picard group generated by
$H$, the pullback of the hyperplane section of $LG(n,W_{2n})$ in its Pl\"ucker embedding, and $R$, the pullback of the hyperplane section
of $G(k, A)$ in its Pl\"ucker embedding. Denote by $\mathcal{Q}$ the tautological bundle on $G(k,A)$ seen as a subbundle of the trivial symplectic
bundle $W_{2n}\otimes \oo_{G(k,A)}$.
Consider the subbundle $\mathcal{Q}^{\perp}\subset W_{2n}\otimes \oo_{G(k,A)}$ perpendicular to $\mathcal{Q}$ with respect the symplectic form.
The following was observed in \cite{PragaczRatajski}.
\begin{lem}
The variety $\tilde{\mathbb{D}}^A_k$ is isomorphic to the Lagrangian bundle 
\[
\F:=LG(n-k,\mathcal{Q}^{\perp}/\mathcal{Q} ).
\]
\end{lem}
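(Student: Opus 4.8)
The plan is to exhibit mutually inverse morphisms between $\tilde{\mathbb{D}}^A_k$ and $\F$ over the common base $G(k,A)$, both of which are identified fibrewise by elementary symplectic linear algebra.

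First I would record the pointwise statement. Fix $[U]\in G(k,A)$ and let $L\in LG_\omega(n,W_{2n})$ with $L\supset U$. Since $L$ is isotropic and contains $U$, one has $L\subset U^\perp$, so $\bar L:=L/U$ is an $(n-k)$-dimensional subspace of $U^\perp/U$. Because $A$ is Lagrangian and $U\subset A$, the space $U$ is isotropic, hence $U\subset U^\perp$ and $\omega$ descends to a bilinear form on $U^\perp/U$; its radical is $(U^\perp)^\perp/U=U/U=0$, so $U^\perp/U$ is a symplectic space of dimension $2n-2k$ and $\bar L$ is Lagrangian in it. Conversely, given a Lagrangian $\bar L\subset U^\perp/U$, its preimage $L$ under $U^\perp\to U^\perp/U$ is an $n$-dimensional isotropic, hence Lagrangian, subspace of $W_{2n}$ containing $U$. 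These two constructions are inverse to one another, so $\pi^{-1}([U])$ is in natural bijection with $LG(n-k,U^\perp/U)$.

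Next I would globalise. Over $G(k,A)$ the tautological bundle $\mathcal{Q}$ has fibre $U$ at $[U]$, so $\mathcal{Q}^\perp$ has fibre $U^\perp$, the quotient $\mathcal{Q}^\perp/\mathcal{Q}$ has fibre $U^\perp/U$ and carries the fibrewise-nondegenerate symplectic form induced by $\omega$ as above; thus $\F=LG(n-k,\mathcal{Q}^\perp/\mathcal{Q})$ is a smooth $LG(n-k,2n-2k)$-bundle over $G(k,A)$, matching $\pi$. On $\tilde{\mathbb{D}}^A_k$ the tautological bundle of $LG_\omega(n,W_{2n})$ pulls back to a rank-$n$ bundle $\mathcal{L}$ with $\pi^*\mathcal{Q}\subset\mathcal{L}\subset\pi^*\mathcal{Q}^\perp$ (these inclusions hold because they hold fibrewise), so $\mathcal{L}/\pi^*\mathcal{Q}$ is a Lagrangian subbundle of $\pi^*(\mathcal{Q}^\perp/\mathcal{Q})$; by the universal property of the relative Lagrangian Grassmannian this determines a morphism $\Phi:\tilde{\mathbb{D}}^A_k\to\F$ over $G(k,A)$. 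In the other direction, writing $\rho:\F\to G(k,A)$ and $\mathcal{S}\subset\rho^*(\mathcal{Q}^\perp/\mathcal{Q})$ for the universal Lagrangian subbundle, the preimage of $\mathcal{S}$ in $\rho^*\mathcal{Q}^\perp\subset W_{2n}\otimes\oo_{\F}$ is a rank-$n$ isotropic subbundle containing $\rho^*\mathcal{Q}$, hence by the universal properties of $LG_\omega(n,W_{2n})$ and of $\tilde{\mathbb{D}}^A_k$ it defines a morphism $\Psi:\F\to\tilde{\mathbb{D}}^A_k$ over $G(k,A)$.

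Finally I would check that $\Phi$ and $\Psi$ are inverse to each other: both compositions can be tested on the tautological (Lagrangian) subbundles, where by the pointwise step they induce the identity, so $\Phi\circ\Psi$ and $\Psi\circ\Phi$ are the respective identity morphisms by the same universal properties. The only genuinely substantive point, and the one I would be most careful about, is the fibrewise nondegeneracy of the induced form on $\mathcal{Q}^\perp/\mathcal{Q}$ — equivalently the identity $(U^\perp)^\perp=U$ — which uses that $\omega$ is nondegenerate on $W_{2n}$ and that $U$ is isotropic (automatic since $U\subset A$ with $A$ Lagrangian); everything else is routine bookkeeping with universal properties of Grassmannian and Lagrangian-Grassmannian bundles.
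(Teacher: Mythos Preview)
Your argument is correct: the pointwise bijection between Lagrangians $L\supset U$ in $W_{2n}$ and Lagrangians in the symplectic reduction $U^\perp/U$ is standard, and your globalisation via the tautological subbundle $\phi^*\mathcal{L}/\pi^*\mathcal{Q}$ and the universal property of the relative Lagrangian Grassmannian is exactly the right way to upgrade it to an isomorphism of varieties over $G(k,A)$.

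The paper itself does not actually prove this lemma; it simply records it as an observation from \cite{PragaczRatajski}. Your proposal therefore supplies the details the paper omits, and does so along precisely the lines the surrounding text suggests (note that immediately after the lemma the paper identifies the tautological subbundle on $\F$ with $\phi^*\mathcal{L}/\pi^*\mathcal{Q}$, which is exactly the bundle you use to define $\Phi$). One small remark: in your write-up you use $\mathcal{L}$ for the pullback of the tautological bundle to $\tilde{\mathbb{D}}^A_k$, whereas in the paper $\mathcal{L}$ denotes the tautological bundle on $LG_\omega(n,W_{2n})$ itself and the pullback is written $\phi^*\mathcal{L}$; this is only notational, but worth aligning if you incorporate the argument.
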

Of course the tautological Lagrangian subbundle on $LG(n-k,\mathcal{Q}^{\perp}/\mathcal{Q})$ can be identified with the bundle
$\phi^{\ast}\mathcal{L}/\pi^{\ast}\mathcal{Q}=:\mathcal{W}$. In particular, we have $c_1(\mathcal{W})=
\phi^{\ast} c_1(\mathcal{L})-\pi^{\ast}c_1(\mathcal{Q})=R-H$.
\begin{lem}
 The relative tangent bundle $T_{\pi}$ of $\pi\colon \F \to G(k,A)$ is the bundle $S^2(\mathcal{W}^{\vee})$.
\end{lem}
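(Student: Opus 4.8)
The plan is to identify $T_\pi$ fibrewise via the standard description of the tangent bundle of a Lagrangian Grassmannian, and then to observe that all the identifications involved are canonical, hence globalize over $G(k,A)$.

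First I would set up the players: $\F=LG(n-k,\mathcal E)$, where $\mathcal E:=\mathcal Q^{\perp}/\mathcal Q$ is a rank $2(n-k)$ bundle on $G(k,A)$. Since $\mathcal Q$ is isotropic we have $\mathcal Q\subset\mathcal Q^{\perp}$, and the symplectic form $\tilde\omega$ descends to a skew form $\bar\omega$ on $\mathcal E$ which is fibrewise nondegenerate because $(\mathcal Q^{\perp})^{\perp}=\mathcal Q$; thus the relative Lagrangian Grassmannian $LG(n-k,\mathcal E)$ is a well-defined smooth bundle over $G(k,A)$. Let $\mathcal S\subset\pi^{\ast}\mathcal E$ denote its tautological Lagrangian subbundle; by the remark preceding the lemma, $\mathcal S=\phi^{\ast}\mathcal L/\pi^{\ast}\mathcal Q=\mathcal W$. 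So it is enough to prove $T_\pi\cong S^2(\mathcal S^{\vee})$.

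Next I would carry out the pointwise computation. Fix a point of $\F$ over $[U]\in G(k,A)$, corresponding to a Lagrangian subspace $L\subset E$ with $E=\mathcal E_{[U]}$ and form $\bar\omega_{[U]}$. The relative tangent space there is the tangent space to the fibre $LG(n-k,E)$, a linear subspace of $T_{[L]}G(n-k,E)=\Hom(L,E/L)$. As $L$ is Lagrangian, $L=L^{\perp}$ and $\bar\omega_{[U]}$ gives an isomorphism $E/L\xrightarrow{\sim}L^{\vee}$, $[w]\mapsto\bar\omega_{[U]}(w,-)|_L$, whence $\Hom(L,E/L)\cong\Hom(L,L^{\vee})\cong L^{\vee}\otimes L^{\vee}$. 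Differentiating the Lagrangian condition $\bar\omega_{[U]}|_{L_t}\equiv 0$ along a curve $L_t$ with $L_0=L$ and derivative $\dot\varphi\in\Hom(L,E/L)$ yields, for all $u,v\in L$,
\[
\bar\omega_{[U]}(\dot\varphi(u),v)+\bar\omega_{[U]}(u,\dot\varphi(v))=0,
\]
which, under the identification above, says precisely that the corresponding tensor in $L^{\vee}\otimes L^{\vee}$ lies in the symmetric part. Hence $T_{[L]}LG(n-k,E)\cong S^2(L^{\vee})$.

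Finally, since these isomorphisms are functorial in $L$ (and in the base), they patch into a bundle isomorphism $T_\pi\cong S^2(\mathcal S^{\vee})=S^2(\mathcal W^{\vee})$, as claimed. I expect the only slightly delicate points to be the verification that $\bar\omega$ really is nondegenerate on the quotient $\mathcal Q^{\perp}/\mathcal Q$ (needed for $\F$ to be a smooth Lagrangian Grassmannian bundle) and the sign bookkeeping in the differentiated Lagrangian condition that pins down the symmetric — rather than the alternating — summand of $L^{\vee}\otimes L^{\vee}$; both are routine. Alternatively, one may simply invoke the relative tangent bundle formula for Lagrangian Grassmannian bundles in \cite{PragaczRatajski}.
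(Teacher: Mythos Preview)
Your proof is correct and follows the same approach as the paper: the paper simply remarks that the result is obtained by globalizing the standard description of the tangent space of a Lagrangian Grassmannian (citing \cite{Mukai}), and you have spelled out precisely that globalization argument in detail.
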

\begin{proof}
 This can be seen by globalizing the construction of the tangent space of the Lagrangian Grassmannian described for example in
 \cite{Mukai}.
\end{proof}
\begin{lem}\label{canonical class for LG bundle}
 The canonical class of $\tilde{\mathbb{D}}^A_k$ is $-(n+1-k)H- (k-1) R$.
\end{lem}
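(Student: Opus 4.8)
The plan is to use the relative canonical bundle formula for the smooth fibration $\pi\colon \tilde{\mathbb{D}}^A_k = LG(n-k,\mathcal{Q}^{\perp}/\mathcal{Q}) \to G(k,A)$, namely $K_{\tilde{\mathbb{D}}^A_k} = K_{\pi} + \pi^{\ast}K_{G(k,A)}$, and to evaluate each summand in terms of the generators $H$ and $R$ of $\Pic(\tilde{\mathbb{D}}^A_k)$.

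First I would compute the relative canonical class. By the preceding lemma the relative tangent bundle is $T_{\pi} = S^2(\mathcal{W}^{\vee})$, where $\mathcal{W}$ is the tautological rank-$(n-k)$ Lagrangian subbundle, which we have already identified as $\phi^{\ast}\mathcal{L}/\pi^{\ast}\mathcal{Q}$ with $c_1(\mathcal{W}) = R - H$. Using the splitting principle (or the fact that $S^2E$ has rank $\binom{r+1}{2}$ and admits a filtration whose graded pieces have total first Chern class $(r+1)c_1(E)$), one has $c_1(S^2E) = (\rk E + 1)\,c_1(E)$ for a bundle $E$ of rank $r = n-k$. Hence $c_1(T_{\pi}) = (n-k+1)\,c_1(\mathcal{W}^{\vee}) = (n-k+1)(H-R)$, so that $K_{\pi} = -(n-k+1)(H-R)$.

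Next I would identify $\pi^{\ast}K_{G(k,A)}$. Since $A$ is Lagrangian, $\dim A = n$, so $G(k,A)\cong G(k,n)$, whose canonical bundle is $\mathcal{O}(-n)$ in the Pl\"ucker embedding; as $R$ is by definition the pullback of the Pl\"ucker hyperplane class of $G(k,A)$, this gives $\pi^{\ast}K_{G(k,A)} = -nR$. Adding the two contributions yields $K_{\tilde{\mathbb{D}}^A_k} = -(n-k+1)H + (n-k+1)R - nR = -(n+1-k)H - (k-1)R$, as claimed.

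The computation is routine; the only points that require care are the duality and sign conventions — whether the relative tangent bundle involves $\mathcal{W}$ or $\mathcal{W}^{\vee}$, and the sign of $c_1(\mathcal{W})$ relative to $H$ and $R$ — both of which are already pinned down earlier in the section. So I do not expect a genuine obstacle here; a sanity check against the case $k=1$ (where $\tilde{\mathbb{D}}^A_1 = \mathbb{D}^A_1$ maps to $G(1,A) = \PP(A)$ and one should recover the index of the relevant Fano variety) can be used to confirm the normalization.
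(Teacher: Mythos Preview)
Your proof is correct and is essentially identical to the paper's: both use the tangent bundle sequence $0\to T_{\pi}\to T_{\tilde{\mathbb{D}}^A_k}\to \pi^*T_{G(k,A)}\to 0$ (equivalently, your relative canonical bundle formula), compute $c_1(T_{\pi})=c_1(S^2\mathcal{W}^{\vee})=(n+1-k)(H-R)$ from the preceding lemma, and combine with $\pi^*c_1(T_{G(k,A)})=nR$ to get $K_{\tilde{\mathbb{D}}^A_k}=-(n+1-k)H-(k-1)R$.
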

\begin{proof}
 We use the exact sequence
 $$0\to T_{\pi} \to T_{\F} \to \pi^*T_{G(k,A)}\to 0.$$
Now $\mathcal{W}^{\vee}$ has rank $n-k$, so $$c_1(T_{\pi})=c_1(S^2(\mathcal{W}^{\vee}))=(n+1-k)c_1(\mathcal{W}^{\vee})=(n+1-k)(H-R)$$ while $\pi^* c_1(T_{G(k,A)})=nR$.
Hence $K_{\F}=-c_1(T_{\F})=-(n+1-k)H-(k-1)R$.
 \end{proof}

\begin{lem}\label{D_1 is hyperplane}
 The variety $\mathbb{D}^A_1$ is a hyperplane section of $LG_\omega(n,W_{2n})$.
\end{lem}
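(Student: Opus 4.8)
The plan is to realize $\mathbb{D}^A_1$ directly as a Plücker hyperplane section. Recall that $LG_\omega(n,W_{2n})$ sits inside $\PP(\wedge^n W_{2n})$ via $[L]\mapsto[\wedge^n L]$ (the restriction of the Plücker embedding of $G(n,W_{2n})$), and that in this embedding $\oo_{LG_\omega(n,W_{2n})}(1)=(\det\mathcal{L})^\vee$ generates the Picard group; a hyperplane section is exactly the divisor class denoted $H$ above. So it suffices to produce a hyperplane of $\PP(\wedge^n W_{2n})$ meeting $LG_\omega(n,W_{2n})$ in $\mathbb{D}^A_1$ as schemes.

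First I would fix an isomorphism $\wedge^{2n}W_{2n}\cong\mathbb{C}$ and a generator $a$ of the line $\wedge^n A\subset\wedge^n W_{2n}$, and form the linear form $\lambda_A\colon\wedge^n W_{2n}\to\mathbb{C}$, $w\mapsto w\wedge a$. For $[L]\in LG_\omega(n,W_{2n})$ with $\wedge^n L=\langle\ell\rangle$, one has $\ell\wedge a\neq 0$ if and only if $L\oplus A=W_{2n}$, i.e.\ iff $\dim(L\cap A)=0$. Hence $\lambda_A$ vanishes on $[L]$ precisely when $[L]\in\mathbb{D}^A_1$. Since a Lagrangian complementary to $A$ certainly exists, $\lambda_A$ is not identically zero on $LG_\omega(n,W_{2n})$, so $\{\lambda_A=0\}$ is a genuine hyperplane and its trace on $LG_\omega(n,W_{2n})$ has support equal to $\mathbb{D}^A_1$.

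It then remains to match the scheme structures. By construction (see \cite{PragaczRatajski}), $\mathbb{D}^A_1$ is the degeneracy locus of the map of rank-$n$ bundles $\mathcal{L}\hookrightarrow W_{2n}\otimes\oo_{LG_\omega(n,W_{2n})}\twoheadrightarrow (W_{2n}/A)\otimes\oo_{LG_\omega(n,W_{2n})}$, hence the zero scheme of the determinant of this map, a section of $(\det\mathcal{L})^\vee\otimes\det(W_{2n}/A)=(\det\mathcal{L})^\vee=\oo_{LG_\omega(n,W_{2n})}(1)$. I would then observe that, with compatible trivializations, evaluating this determinant at $[L]$ computes exactly the image of $\ell\wedge a$ under $\wedge^{2n}W_{2n}\cong\mathbb{C}$, so that this section is (up to a universal scalar) the restriction of $\lambda_A$; therefore $\mathbb{D}^A_1=LG_\omega(n,W_{2n})\cap\{\lambda_A=0\}$ as schemes. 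The only genuinely delicate point is this last identification of the determinantal section with the linear form $\lambda_A$ — a routine but necessary piece of bookkeeping; everything else is immediate linear algebra.
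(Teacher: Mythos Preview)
Your argument is correct and is essentially the same as the paper's, just spelled out in more detail: the paper's proof is the one-liner that $\mathbb{D}^A_1$ is the restriction to $LG_\omega(n,W_{2n})$ of the codimension-one Schubert cycle $\{[L]\in G(n,2n)\mid L\cap A\neq 0\}$, which is classically the Pl\"ucker hyperplane $\{\lambda_A=0\}$ you write down explicitly. Your extra care about the scheme structure is fine but not something the paper pauses over.
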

\begin{proof}
 Indeed $\mathbb{D}^A_1$ is the intersection of the codimension one Schubert cycle on the Grassmannian $G(n,2n)$ with the Lagrangian
 Grassmannian, hence a hyperplane section of the Lagrangian Grassmannian.
\end{proof}

 Let us denote by $\mathbb{E}$ the exceptional divisor of $\phi$.
 \begin{lem} \label{lem 2.5} For $k=2$ we have: $[\mathbb{E}]=[H]-2[R]$.
 \end{lem}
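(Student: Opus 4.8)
The plan is to compute the class of the exceptional divisor $\mathbb{E}\subset\tilde{\mathbb{D}}^A_2$ of the birational morphism $\phi\colon\tilde{\mathbb{D}}^A_2\to\mathbb{D}^A_2$ by comparing canonical classes. Since $\mathrm{Pic}(\tilde{\mathbb{D}}^A_2)$ is freely generated by $H$ and $R$ (as recorded just above), we may write $[\mathbb{E}]=aH+bR$ for integers $a,b$, and it suffices to pin down $a$ and $b$ by testing against two independent numerical conditions.

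First I would exploit the birationality of $\phi$ together with the adjunction/discrepancy formula. The image $\mathbb{D}^A_2$ is, for generic $A$, singular in codimension one along $\mathbb{D}^A_3$ with transversal $A_1$-type (i.e. $\frac12(1,1,1)$, equivalently ordinary double point in the relevant transversal slice) singularities, so $\phi$ is a crepant-type resolution whose discrepancy along $\mathbb{E}$ is determined by this transversal singularity type; concretely, pulling back the canonical (Weil) divisor of $\mathbb{D}^A_2$ one gets $K_{\tilde{\mathbb{D}}^A_2}=\phi^*K_{\mathbb{D}^A_2}+ \delta\,\mathbb{E}$ with an explicit discrepancy coefficient $\delta$. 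Combining this with Lemma \ref{canonical class for LG bundle} for $n=3$, $k=2$, which gives $K_{\tilde{\mathbb{D}}^A_2}=-2H-R$, and with the fact that $K_{\mathbb{D}^A_2}$ is a multiple of the hyperplane class $H$ (since $\mathbb{D}^A_2$ sits in $LG_\omega(3,W_6)$ and its dualizing sheaf is computed from the degeneracy-locus structure — indeed one expects $K_{\mathbb{D}^A_2}=\phi_*(\text{something})$ restricting appropriately), yields one linear relation among $a$, $b$ and hence forces the $R$-coefficient $b=-2$.

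To get the $H$-coefficient I would restrict to a well-chosen curve or use intersection numbers on the fibration $\pi\colon\tilde{\mathbb{D}}^A_2=LG(1,\mathcal{Q}^\perp/\mathcal{Q})\to G(2,A)$. Over a general point $[U]\in G(2,A)$ the fiber is $LG(1,4)\cong\PP^1$, and $R$ restricts to a point while $H$ restricts to $\mathcal{O}_{\PP^1}(1)$ (since $c_1(\mathcal{W})=R-H$ and $\mathcal{W}|_{\text{fiber}}=\mathcal{O}_{\PP^1}(-1)$). The exceptional locus $\mathbb{E}$ meets such a fiber in the points $[L]$ with $\dim(L\cap A)\ge 3$, i.e. where the Lagrangian line in $\mathcal{Q}^\perp/\mathcal{Q}$ lies in the image of $A/U$; counting these points (a Schubert-type computation in $LG(1,4)$) gives $\mathbb{E}\cdot(\text{fiber})=1$, hence $a-0\cdot(\text{stuff})$ forces $a=1$. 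Together with $b=-2$ this gives $[\mathbb{E}]=[H]-2[R]$.

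The main obstacle I expect is making the discrepancy computation along $\mathbb{E}$ rigorous: one must verify that $\phi$ is an isomorphism away from $\phi^{-1}(\mathbb{D}^A_3)$, identify the transversal singularity type of $\mathbb{D}^A_2$ along $\mathbb{D}^A_3$ precisely enough to read off the discrepancy coefficient, and check that $K_{\mathbb{D}^A_2}$ really is the expected multiple of $H$ as a Weil divisor class — this last point requires knowing the dualizing sheaf of the (singular, but Gorenstein in codimension one) degeneracy locus $\mathbb{D}^A_2$. An alternative, perhaps cleaner route that avoids the singularity analysis entirely is purely enumerative: compute both $\mathbb{E}\cdot C_1$ and $\mathbb{E}\cdot C_2$ for two test curves $C_1,C_2$ spanning the curve classes dual to $H$ and $R$ — for instance a fiber of $\pi$ and a line in a fiber of $\phi$-contracted locus, or a line in $G(2,A)$ lifted to $\tilde{\mathbb{D}}^A_2$ — using that $\phi$ contracts exactly the $\phi$-exceptional curves; I would likely pursue this enumerative version in parallel as a cross-check.
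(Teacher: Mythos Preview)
Your computation of the $H$-coefficient $a=1$ by restricting $\mathbb{E}$ to a fibre of $\pi$ is exactly the right idea and is what the paper does --- but you have specialised to $n=3$ (where the fibre is $LG(1,4)\cong\PP^1$), whereas the lemma is stated and later applied for general $n$ (in the EPW cube application $n=10$). The paper's version works uniformly: on the fibre $LG(n-2,V_2^\perp/V_2)$ over $[V_2]\in G(2,A)$ the divisor $\mathbb{E}$ restricts to the locus $\mathbb{D}_1^{A/V_2}$, which is a hyperplane section by Lemma \ref{D_1 is hyperplane}, giving $a=1$.

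For the $R$-coefficient your discrepancy approach has a genuine gap. It requires knowing independently both the canonical Weil class $K_{\mathbb{D}^A_2}$ and the precise discrepancy of $\phi$ along $\mathbb{E}$, and neither is available at this point. You assert that $K_{\mathbb{D}^A_2}$ is a multiple of $H$ and that the transversal singularity is of type $\frac12(1,1,1)$, but neither is proved for the abstract locus $\mathbb{D}^A_2\subset LG_\omega(n,W_{2n})$; the $\frac12(1,1,1)$ description in the paper pertains to $D^A_2\subset G(3,W)$ and is established only \emph{after} this lemma via the local analysis of Section 2.2, so invoking it here would be circular. Even granting both ingredients, you never actually carry out the calculation that extracts $b=-2$.

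The paper's argument for $b$ is purely enumerative and bypasses any singularity analysis. Since $\phi$ contracts $\mathbb{E}$ to $\mathbb{D}^A_3$, one has $\mathbb{E}\cdot H^{N-4}=0$ on $\tilde{\mathbb{D}}^A_2$, where $N=\frac{n(n+1)}{2}$. The paper expresses $H^{N-3}$ and $H^{N-4}\cdot R$ in terms of Chern classes of $\mathcal{L}^\vee$ via the Pragacz--Ratajski formulas for $[\mathbb{D}^A_2]$ and for the Schubert class $[\sigma_{n-2,n}]=\phi_*\pi^*[\sigma_{V_{n-2}}]$; substituting into $0=(H+bR)\cdot H^{N-4}$ and applying the Hiller--Boe relations $c_1^2=2c_2$, $c_2^2=2(c_1c_3-c_4)$ in the Chow ring of the Lagrangian Grassmannian collapses the equation to $(b+2)\deg\sigma_{n-2,n}=0$, whence $b=-2$. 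Your proposed ``alternative enumerative route'' is closer in spirit to this, but to make it work you still need a concrete test class detecting $R$ and a way to evaluate $\mathbb{E}$ against it; the paper's choice --- a Schubert hyperplane in $G(2,A)$ pulled back by $\pi$, combined with the contraction identity $\mathbb{E}\cdot H^{N-4}=0$ --- is what makes the computation go through for all $n$.
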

\begin{proof}
 It is clear that $[\mathbb{E}]= a [H] +b[R] $ for some $a,b\in \mathbb{Z}$. Let us now consider the restriction of $\mathbb{E}$ to a fiber of $\pi$ i.e. we fix
 $V_2\subset A$ a vector space of dimension 2 and consider $LG(n-2, V_2^{\perp}/V_2)$. Since $\mathbb{E}=\phi^{-1} D_{3}^A$ we have
 \[
 \mathbb{E}\cap \pi^{-1}[V_2]=\{[L]\in LG(n-2, V_2^{\perp}/V_2)| \dim (L/V_2\cap A/V_2)\geq 1\}.
 \]
  It is hence a divisor of type
 $\mathbb{D}^{A/V_2}_1$ which is a hyperplane section of the fiber by Lemma \ref{D_1 is hyperplane}. It follows that $a=1$.

 To compute the coefficient at $[R]$ we fix a subspace $V_{n-2}$ of dimension $n-2$ in $A$ and consider the Schubert cycle
 $$\sigma_{V_{n-2}}=\{[U]\in G(2,A)| \dim(U\cap V_{n-2})\geq 1\}.$$
 The class $[\sigma_{V_{n-2}}]$ in the Chow group of $G(2,A)$ is then the class of a hyperplane section.
 We now describe $\phi_*\pi^{*}(\sigma_{V_{n-2}})$ as the class of the Schubert cycle $\sigma_{n-2,n}$ on $LG(n,2n)$ defined by
 $$\sigma_{n-2,n}=\{[L]\in LG(n,2n)|\quad\dim(L\cap V_{n-2})\geq 1,\quad \dim(L\cap A)\geq 2\}.$$
 By \cite[Theorem 2.1]{PragaczRatajski} we have
 $$[\sigma_{n-2,n}]=c_1 (\mathcal{L}^{\vee})c_{3}(\mathcal{L}^{\vee})-2c_{4}(\mathcal{L}^{\vee}).$$
 Moreover, from the same formula \cite[Theorem 2.1]{PragaczRatajski} we have:
 $$[\mathbb{D}^A_2]= c_1(\mathcal{L}^{\vee})c_{2}(\mathcal{L}^{\vee})-2c_{3}(\mathcal{L}^{\vee}).$$
 In terms of intersection on $\tilde{\mathbb{D}}^A_2$ this gives
 $$H^{\frac{n(n+1)}{2}-3}\cap [\tilde{\mathbb{D}}^A_2]=c_1(\mathcal{L}^{\vee})^{\frac{n(n+1)}{2}-2}c_{2}(\mathcal{L}^{\vee})-2c_1(\mathcal{L}^{\vee})^{\frac{n(n+1)}{2}-3}
 c_{3}(\mathcal{L}^{\vee})$$
 and
 $$H^{\frac{n(n+1)}{2}-4}\cdot R\cap[\tilde{\mathbb{D}}^A_2]=c_1(\mathcal{L}^{\vee})^{\frac{n(n+1)}{2}-3}c_{3}(\mathcal{L}^{\vee})-2c_1(\mathcal{L}^{\vee})^{\frac{n(n+1)}{2}-4}
 c_{4}(\mathcal{L}^{\vee}).$$
 Since we know that $\mathbb{E}$ is contracted by the resolution to $\mathbb{D}^A_3$ we also have $\mathbb{E}\cdot H^{\frac{n(n+1)}{2}-4}=0$.
 We can now compute $b$:
 \begin{align}\label{computation of EH}
 0=&\mathbb{E}\cdot H^{\frac{n(n+1)}{2}-4}=(H+bR)\cdot H^{\frac{n(n+1)}{2}-4}=H^{\frac{n(n+1)}{2}-3}+bH^{\frac{n(n+1)}{2}-4}\cdot R=\\
 &c_1(\mathcal{L}^{\vee})^{\frac{n(n+1)}{2}-4}(c_1(\mathcal{L}^{\vee})^2c_{2}(\mathcal{L}^{\vee})+(b-2)c_1(\mathcal{L}^{\vee})
 c_{3}(\mathcal{L}^{\vee})-2b c_{4}(\mathcal{L}^{\vee})).
 \end{align}
Now, using the theorem of Hiller-Boe (\cite[Theorem 6.4]{PragaczLN1478}) on relations in the Chow ring of the Lagrangian Grassmannian we get
$$c_1(\mathcal{L}^{\vee})^2=2c_2(\mathcal{L}^{\vee}) \text{ and } c_2(\mathcal{L}^{\vee})^2=2(c_3(\mathcal{L}^{\vee})c_1(\mathcal{L}^{\vee})-c_4(\mathcal{L}^{\vee})).$$
Substituting in \ref{computation of EH} we get:
$$0=(b+2)\deg (c_1(\mathcal{L}^{\vee}) c_{3}(\mathcal{L}^{\vee})-2 c_{4}(\mathcal{L}^{\vee}))=(b+2)\deg \sigma_{n-2,n}.$$
It follows that $b=-2$.

\end{proof}

 \subsection{The embedding of $G(3,W)$ into $LG_\eta(10,\wedge^3 W)$} Let $W$ be a $6$-dimensional vector space.
   Let $G=G(3,W)\subset \mathbb{P}(\wedge^3 W)$ be the Grassmannian of $3$-dimensional subspaces in $W$
   in its Pl\"ucker embedding. Now, recall for each $[U]\in G$,  %$U\subset W$ be the $3$-space corresponding to $x$ and let 
   \[
   T_U=\wedge^2U\wedge W\subset \wedge^3 W.
   \]
    %denote the $10$-dimensional
   %subspace whose projectivization 
   $\PP(T_U)$ is tangent to $G(3,W)$ at $[U]$.  Let $\mathcal{T}$ be the corresponding vector subbundle of
   $\wedge^3 W\otimes \mathcal{O}_{G}$. Let $A$ be a $10$-dimensional subspace of
   $\wedge^3 W$ isotropic with respect to the symplectic form $\eta$ defined by (\ref{def of eta}) and such that $\PP(A)\cap G(3,W)=\emptyset$.
   Recall that for $k=1,2,3,4$ we defined 
   \[
   D^A_k=\{[U]\in G | \dim (T_U\cap A)\geq k\}\subset G.
   \]

  Observe that $\mathcal{T}$ is a Lagrangian subbundle of $\wedge^3 W \otimes \mathcal{O}_G$  with respect to
  the 2-form $\eta$. It follows that we are in the general situation
  described at the beginning of  Section \ref{general lagrangian loci}, with $n=10$, $W_{20}=\wedge^3 W$, $X=G$, $J=\mathcal{T}$ and $A=A$.
  Then  $\mathcal{T}$ defines a map  \[
  \iota: G(3,W) \to LG_{\eta}(10,\wedge^3 W),\quad [U]\mapsto [T_U].
  \]
  We denote by $\mathcal{C}_U:= \PP({T}_U)\cap G(3,W)$ the intersection of $G(3,W)$ with its projective tangent space $[U]$.  Then $\mathcal{C}_U$ is linearly isomorphic to a cone over $\mathbb{P}^2\times \mathbb{P}^2$ with vertex $[U]$.
  The quadrics containing the cone $\mathcal{C}_U$ plays in this situation a similar role in the local analyze of the singularities of $D_k^A$ as the Pl\"{u}cker quadrics containing the Grassmanian $\PP(F_{[w]})\cap G(3,W)$ in \cite{EPWMichigan}; this will be made more precise in Lemma \ref{quadricsC_v}.

   We aim at proving the following:
  \begin{prop} \label{transversality}  Let  $A\in LG_{\eta}(10,\wedge^3 W)$ such that $\PP(A)\cap G(3,W)=\emptyset$.
 
 The map $\iota$ is an embedding and $\iota(G(3,W))$ meets transversely all loci $\mathbb{D}^A_k\setminus \mathbb{D}^A_{k+1}$ for $k=1,2,3$. In particular each $D^A_{k}$ is of expected dimension.
\end{prop}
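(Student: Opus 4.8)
The plan is to reduce the statement to a local computation at each point $[U] \in D_k^A \setminus D_{k+1}^A$ and to phrase it in terms of the second fundamental form of the embedding $G(3,W) \hookrightarrow LG_\eta(10,\wedge^3 W)$. First I would recall the standard description of the tangent space of the Lagrangian Grassmannian: at a point $[L] \in LG_\eta(10,\wedge^3W)$ the tangent space is canonically $S^2 L^\vee$ (via the symplectic form), and the degeneracy locus $\mathbb{D}^A_k$ has, at a general point $[L] \in \mathbb{D}^A_k \setminus \mathbb{D}^A_{k+1}$ with $K := L \cap A$ of dimension exactly $k$, tangent cone governed by the quadric map $S^2L^\vee \to S^2 K^\vee$ given by restriction. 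More precisely, the Zariski tangent space of $\mathbb{D}^A_k$ at such a point is the kernel of $S^2 L^\vee \to S^2 K^\vee$, so $\mathbb{D}^A_k$ is smooth there of the expected codimension $\binom{k+1}{2}$, and by \cite{PragaczRatajski} the resolution $\tilde{\mathbb{D}}^A_k$ maps isomorphically onto this smooth locus. Thus transversality of $\iota(G(3,W))$ with $\mathbb{D}^A_k$ at $[U]$ amounts to showing that the composite
\[
T_{[U]}G(3,W) \xrightarrow{d\iota} S^2 T_U^\vee \longrightarrow S^2 K^\vee, \qquad K = T_U \cap A,
\]
is surjective, where $d\iota$ is exactly the second fundamental form of $G(3,W) \subset \PP(\wedge^3W)$ read off along the tangent directions.

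Next I would make the second fundamental form explicit. For $[U] \in G(3,W)$ with complement $W = U \oplus U'$, one has $T_{[U]}G(3,W) = \Hom(U,U') = U^\vee \otimes U'$, and $T_U = \wedge^2 U \wedge W$; the differential $d\iota$ sends a homomorphism $\varphi : U \to U'$ to the symmetric form on $T_U$ obtained from the natural "derivative of the Plücker embedding" — concretely, contracting with $\varphi$ and using $\eta$. The key point is that the image of $d\iota$ inside $S^2 T_U^\vee$ can be identified, after quotienting by the tangent directions along $\mathcal{C}_U = \PP(T_U) \cap G(3,W)$, with the space of quadrics through the cone $\mathcal{C}_U$; this is precisely the analogy with \cite{EPWMichigan} that the paragraph before the Proposition advertises, and it is the content that Lemma~\ref{quadricsC_v} will pin down. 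I would therefore state and use that the composite $T_{[U]}G \to S^2 K^\vee$ factors through, and is determined by, the restriction to $K$ of the quadrics cutting out $\mathcal{C}_U$, together with the linear forms coming from $[U]$ being the vertex of the cone.

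The remaining step — and the main obstacle — is the surjectivity/genericity argument: one must show that for $[A] \in LG_\eta(10,\wedge^3W)$ with $\PP(A) \cap G(3,W) = \emptyset$, at every point of $D_k^A \setminus D_{k+1}^A$ (for $k=1,2,3$) the map $T_{[U]}G \to S^2 K^\vee$ is onto. For $k=1$ this is essentially automatic since $\mathbb{D}^A_1$ is a hyperplane section (Lemma~\ref{D_1 is hyperplane}) and $G(3,W)$ is not contained in any hyperplane of $\PP^{19}$. For $k=2,3$ I expect the argument to run as follows: pick coordinates adapted to $U$ and to a choice of $K = T_U \cap A$ of dimension $k$; the condition $\PP(A) \cap G = \emptyset$ forces $K$ to avoid the cone $\mathcal{C}_U$, equivalently the $k$ chosen elements of $A \cap T_U$ span a $\PP^{k-1}$ disjoint from $\mathcal{C}_U$; one then checks, by a direct but finite computation with the explicit second fundamental form, that the quadrics through $\mathcal{C}_U$ restricted to such a generic $\PP^{k-1}$ already span $S^2 K^\vee$ (dimension $\binom{k+1}{2} \le 6$). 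The delicate part is handling the finitely many special positions of $K$ relative to $\mathcal{C}_U$ that are still allowed by $\PP(A)\cap G=\emptyset$ — one needs that disjointness from the cone alone suffices for surjectivity, with no further genericity on $A$. Once this linear-algebra fact is established for $k=1,2,3$, transversality follows at every point of $\mathbb{D}^A_k\setminus\mathbb{D}^A_{k+1}$ meeting $\iota(G)$, hence each $D^A_k$ has the expected dimension $\dim G - \binom{k+1}{2} = 9 - \binom{k+1}{2}$, i.e.\ $6,4,3$ for $k=1,2,3$ (and empty for $k=4$).
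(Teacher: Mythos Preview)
Your strategy is the same as the paper's: reduce to showing that at a point $[U_0]$ with $K=T_{U_0}\cap A$ of dimension $k\le 3$, the composite
\[
T_{[U_0]}G(3,W)\;\xrightarrow{\;d\iota\;}\;S^2 T_{U_0}^{\vee}\;\longrightarrow\;S^2 K^{\vee}
\]
is surjective, and identify the image of $d\iota$ with the linear system of quadrics through the cone $\mathcal{C}_{U_0}$. The paper carries this out via Lemma~\ref{quadricsC_v} (the explicit formula for $q_U$, whose linear part in the $b_{ij}$ is exactly the $2\times 2$ minors of $M$), Lemma~\ref{tangent cone} (identifying the tangent cone of $D^A_l$ at $[U_0]$ with a cone over the corank-$l$ locus of quadrics on $\PP(K)$), and Corollary~\ref{cor of 2.9}. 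The one substantive step you leave as ``a direct but finite computation'' is the paper's Lemma~\ref{restriction of quadrics to K}: the quadrics through $\mathcal{C}_{U_0}$ restrict surjectively to $H^0(P,\mathcal{O}_P(2))$ for \emph{every} linear $P\subset \PP(T_{U_0})\setminus G(3,6)$ of dimension $\le 2$. The paper does not do this by computation but by a short geometric argument using the standard Cremona transformation of $\PP^8$ defined by the $2\times 2$ minors of a generic $3\times 3$ matrix: a plane disjoint from $\PP^2\times\PP^2$ is mapped isomorphically to a linearly normal Veronese surface, which forces surjectivity. Your ``delicate part'' (no further genericity on $A$ beyond $\PP(A)\cap G=\emptyset$) is thus handled uniformly, not case-by-case.

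Two small corrections. The expected dimensions are $9-\binom{k+1}{2}=8,6,3$ for $k=1,2,3$, not $6,4,3$. And the parenthetical ``(and empty for $k=4$)'' is not part of this proposition: $D_4^A=\emptyset$ requires the extra hypothesis $[A]\notin\Gamma$ and is dealt with separately.
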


 For the proof we shall adapt the idea of \cite{EPWMichigan} to our context, that we first need to introduce.
Let us describe $\iota$ more precisely locally around a chosen point $[U_0]\in G(3,W)$. For this, we choose a basis $v_1,\dots,v_6$ for $W$ such that $U_0=\langle v_1, v_2, v_3\rangle$  and define
 $U_{\infty}=\langle v_4, v_5, v_6\rangle$. For any $[U]\in G(3,W)$ we have $T_U=\wedge^2 U\wedge W$, %where $U_x$ is the three-space corresponding to $x$, 
 so
 $T_{U_0}$, $T_{U_\infty}$ are two Lagrangian spaces that intersect only at $0$;  $T_{U_0}\cap T_{U_\infty}=0$. By appropriate choice of $v_4,v_5,v_6$ we can also assume that $T_{U_\infty}\cap A=0$.
 
 Let 
 \[
 \mathcal{V}=\{[L]\in LG_\eta(10,\wedge^3 W)| L \cap T_{U_\infty}=0\}.
 \]
 The decomposition $\wedge^3 W=T_{U_0}\oplus T_{U_\infty}$ into Lagrangian subspaces, and the isomorphism $T_{U_\infty}\to T_{U_0}^{\vee}$ induced by $\eta$, allows us to view a Lagrangian space $L$ in  $ \mathcal{V}$ as the graph of a symmetric linear map 
 $Q_L: T_{U_0}\to T_{U_\infty}=T_{U_0}^{\vee}$.  Let $q_L\in Sym^2 T_{U_0}^{\vee}$  be the quadratic form corresponding to $Q_L$.
 %, where $L_y$ denotes the Lagrangian subspace corresponding to $y$. 
 The map $[L]\mapsto q_L$ defines an isomorphism $\mathcal{V}\to Sym^2 T_{U_0}^{\vee}$.

 Consider the open neighbourhood 
 \[
 \mathfrak{U}=\{[U]\in G(3,W)| T_U\cap T_{U_\infty}=0\}
 \] 
 of $[U_0]$ in $G(3,W)$.  %then $\mathfrak{U}$ is an open subset of $G(3,W)$ and ${[U_0]}\in \mathfrak{U}$.
 For $[U]\in \mathfrak{U}$ we denote by $Q_U:=Q_{T_U}$ and $q_U:=q_{T_U}$ the symmetric linear map and the quadratic form corresponding to the Lagrangian space $T_U$.
 
 We shall describe $q_U$ in local coordinates.  Observe that  for any $[U]\in G(3,W)$,
  \[
T_U\cap T_{U_\infty}=0\leftrightarrow  U\cap {U_\infty}=0
 \] 
 and that any such subspace $U$ is the graph of a linear map $\beta_U:U_0\to U_{\infty}$.
 In particular, there is an isomorphism:
 \[
\rho: \mathfrak{U}\to Hom(U_0,U_{\infty}); \quad [U]\mapsto \beta_U
\]
whose inverse is the map
\[
 \alpha \mapsto [U_\alpha]:=[(v_1+\alpha(v_1))\wedge(v_2+\alpha(v_2))\wedge (v_3+\alpha(v_3))].
\]
In the given basis $(v_1,v_2,v_3), (v_4,v_5,v_6)$ for $U_0$ and $U_\infty$ we let $B_U=(b_{i,j})_{i,j\in\{1\dots 3\}}$ be the matrix of the linear map $\beta_U$. 
In the dual basis we let $(m_0, M)$, with $ M=(m_{i,j})_{i,j\in\{1\dots3\}}$, be the coordinates in
\[
T_{U_0}^{\vee}=(\wedge^3 U_0\oplus \wedge^2U_0\otimes U_\infty)^\vee=(\wedge^3 U_0\oplus Hom(U_0, U_\infty))^\vee
\]
%We let $(m_0, M)$, with $ M=(m_{i,j})_{i,j\in\{1\dots3\}}$ be the coordinates in $T_{U_0}^{\vee}$ associated to the choice
 % $(v_1,v_2,v_3), (v_4,v_5,v_6)$ are represented by pairs $(m_0, M)$, where $ M=(m_{i,j})_{i,j\in\{1\dots3\}}$.
Note, that under our identification the map $\iota:G(3,W)\to LG(10,wedge^3 W)$ restricted to $\mathfrak{U}$ is the map 
$ [U]\mapsto q_U$, which justifies our slight abuse of notation in the following.
  \begin{lem}\label{quadricsC_v}% Let $[U]\in \mathfrak{U}$ and let $\beta_U\in Hom(U_0,U_{\infty})$ be given by a $3\times 3$ matrix $B_U=(b_{i,j})_{i,j\in\{1\dots 3\}}$ in the basis $(v_1,v_2,v_3), (v_4,v_5,v_6)$. Points in $T_{U_0}^{\vee}$ in coordinates associated to the choice
%  $(v_1,v_2,v_3), (v_4,v_5,v_6)$ are represented by pairs $(m_0, M)$, where $ M=(m_{i,j})_{i,j\in\{1\dots3\}}$. 
  In the above coordinates, the map 
  \[
  \iota: \mathfrak{U}\ni [U]\mapsto q_U:=q_{T_U}\in Sym^2 T_{U_0}^{\vee}
  \]
  is defined by
  \begin{equation}\label{eqquadrics}
  q_U(m_0, M)=\sum_{i,j\in\{1\dots3\}} b_{i,j} M^{i,j} + m_0 \sum_{i,j\in\{1\dots3\}} B^{i,j}_U m_{i,j} + m_0^2 \det B_U, 
  \end{equation}
  where $M^{i,j}$, $B^{i,j}_U$ are the entries of the matrices adjoint to $M$ and $B_U$.
  \end{lem}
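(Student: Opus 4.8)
The plan is to compute the graph of $T_U$ over $T_{U_0}$ directly in the chosen bases, exploiting the fact that $T_U$ is a decomposable object built from $\wedge^2 U\wedge W$. Write $u_i = v_i + \beta_U(v_i)$ for $i=1,2,3$, so that $U = \langle u_1,u_2,u_3\rangle$ and $B_U=(b_{i,j})$ is the matrix of $\beta_U$ expressed in the bases $(v_1,v_2,v_3)$ of $U_0$ and $(v_4,v_5,v_6)$ of $U_\infty$. Then $T_U = \wedge^2 U \wedge W$ is spanned by the nine trivectors $u_a\wedge u_b\wedge v_c$ with $\{a,b\}\subset\{1,2,3\}$ and $c\in\{1,\dots,6\}$, equivalently by $u_1\wedge u_2\wedge u_3$ together with $u_a\wedge u_b\wedge v_{c}$ for $c\in\{4,5,6\}$. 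First I would expand each such generator in the fixed basis $\{v_{i}\wedge v_{j}\wedge v_{k}\}$ of $\wedge^3 W$, grouping the result into its $T_{U_0}$-component (the part lying in $\wedge^2 U_0\wedge W = \wedge^3U_0\oplus \wedge^2 U_0\otimes U_\infty$) and its $T_{U_\infty}$-component. Since $T_{U_\infty}\cap T_U=0$ for $[U]\in\mathfrak U$, the $T_{U_0}$-components of the nine generators form a basis of $T_{U_0}$, and reading off the $T_{U_\infty}$-components against that basis produces the matrix of $Q_U\colon T_{U_0}\to T_{U_\infty}\cong T_{U_0}^\vee$.

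The next step is bookkeeping: under the identification $T_{U_0}^\vee = (\wedge^3 U_0\oplus \operatorname{Hom}(U_0,U_\infty))^\vee$ with coordinates $(m_0, M=(m_{i,j}))$, one matches $m_0$ to the $\wedge^3U_0$-direction and $m_{i,j}$ to the direction $v_i^*\otimes v_{j+3}$. The generator $u_1\wedge u_2\wedge u_3$ has $T_{U_0}$-leading term $v_1\wedge v_2\wedge v_3$ and contributes the remaining terms, which are degree $\geq 1$ in the $b_{i,j}$; the top-degree piece, coming from $\beta_U(v_1)\wedge\beta_U(v_2)\wedge\beta_U(v_3)\in\wedge^3 U_\infty$, is $(\det B_U)\,v_4\wedge v_5\wedge v_6$ and accounts for the $m_0^2\det B_U$ term. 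The mixed terms of $u_1\wedge u_2\wedge u_3$, of the form $v_a\wedge v_b\wedge \beta_U(v_c)$ composed with the pairing, produce the cofactor expression $m_0\sum B_U^{i,j}m_{i,j}$. Finally the generators $u_a\wedge u_b\wedge v_c$ with $c\in\{4,5,6\}$ have $T_{U_0}$-leading part in $\wedge^2 U_0\otimes U_\infty$ and pair off with the linear coordinates to give $\sum b_{i,j}M^{i,j}$, where the adjugate of $M$ appears because passing from the basis $\{u_a\wedge u_b\wedge v_c\}$ to the basis dual to the $m_{i,j}$ involves a $\wedge^2$ of the $3\times 3$ coordinate matrix. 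Assembling the three contributions and using that $\eta$ identifies $T_{U_\infty}$ with $T_{U_0}^\vee$ up to the chosen normalization of $j$ yields the stated formula \eqref{eqquadrics} for $q_U$.

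I expect the main obstacle to be purely organizational rather than conceptual: keeping consistent sign conventions and a consistent ordering of basis trivectors so that the quadratic form, the adjugate matrices, and the pairing induced by $\eta$ all line up, and in particular verifying that the cross terms between $u_1\wedge u_2\wedge u_3$ and the $u_a\wedge u_b\wedge v_c$ do not contribute anything beyond what is written (one must check that the $T_{U_0}$-leading parts of the nine generators are genuinely in ``triangular'' position with respect to the coordinates $(m_0,M)$, so that inverting to get $Q_U$ is clean). A convenient way to organize this, which I would follow, is to first verify the formula on the two extreme strata — $B_U$ of rank $0$ (where only the $m_0^2\det B_U$ and the linear part survive trivially) and $B_U$ invertible (where one can instead compute the graph of $T_U$ over $T_{U_\infty}$ and invert a symmetric $10\times10$ matrix block by block) — and then note that both sides of \eqref{eqquadrics} are polynomial in the entries $b_{i,j}$, so agreement on a dense set forces the identity. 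Once the formula is in hand, one also records for later use (Lemma \ref{quadricsC_v} is cited in the proof of Proposition \ref{transversality}) that the rank-$\leq 1$ quadrics among the $q_U$ are exactly those vanishing on the cone $\mathcal C_{U_0}$, which is immediate from \eqref{eqquadrics} by setting $B_U=0$.
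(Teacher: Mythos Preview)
Your approach is essentially the paper's own---the paper's proof is a single sentence saying to write in coordinates the linear map $T_{U_0}=\wedge^3 U_0\oplus\wedge^2 U_0\otimes U_\infty\to \wedge^3 U_\infty\oplus\wedge^2 U_\infty\otimes U_0=T_{U_\infty}$ whose graph is $T_U=\wedge^3 U\oplus\wedge^2 U\otimes U_\infty$, which is precisely your expansion of the generators $u_1\wedge u_2\wedge u_3$ and $u_a\wedge u_b\wedge v_c$ (there are ten of these, not nine). One small correction to your closing remark: setting $B_U=0$ gives $q_U=0$; the statement you want, and the one used later, is that the \emph{linear part} in $B_U$ of the family $q_U$, namely $\sum b_{i,j}M^{i,j}$, sweeps out the $9$-dimensional linear system of quadrics cutting out the cone $\mathcal{C}_{U_0}$.
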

  \begin{proof} We write in coordinates the map $\wedge^3 U_0 \oplus \wedge^2 U_0\otimes U_{\infty} \to \wedge^3 U_{\infty} \oplus \wedge^2 U_{\infty}\otimes U_0  $
  whose graph is $ \wedge^3 U \oplus \wedge^2 U\otimes U_{\infty} $ where $U$ is the graph of the map $U_0\to U_{\infty}$ given by the matrix $B_U$.
  \end{proof}

  Let now $Q_A$ be the symmetric map  $T_{U_0}\to T_{U_\infty}=T_{U_0}^{\vee}$ whose graph is $A$ and $q_A$ the corresponding quadratic form.
  In this way
  $$D_l^A\cap\mathfrak{U}=\{[U]\in \mathfrak{U} |\dim T_U\cap A)\geq l\}=\{[U]\in \mathfrak{U} |\operatorname{rk} (Q_U-Q_A)\leq 10-l\},$$
  hence $D_l^A$ is locally defined by the vanishing of the $(11-l)\times (11-l)$ minors of the $10\times 10$ matrix with entries being polynomials in $b_{i,j}$.
  
  First we show that the space of quadrics that define %the tangent cone 
  ${\mathcal{C}_{U}}$,  surjects onto the space of quadrics on linear subspaces in $\PP(T_{U})$.
%\krtodo{I think I deleted some k here. It appears in the proof below, but I do not understand how, please check}
   \begin{lem}\label{restriction of quadrics to K} If $P\subset \PP(T_{U})\setminus G(3,6)$ is a linear subspace of dimension at most   $2$,  then the restriction map $\mathbf{r}_P: H^0(\PP(T_{U}), \mathcal{I}_{\mathcal{C}_{U}}(2)) \to H^0(P, \mathcal{O}_{P}(2))$ is surjective.
  \end{lem}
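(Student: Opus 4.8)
The plan is to convert surjectivity of $\mathbf{r}_P$ into a linear–algebra statement on $\Sym^2 T_U$ and then into a small piece of matrix theory. First I would note that, since $P$ is a linear subspace, the restriction $H^0(\PP(T_U),\oo(2))\to H^0(P,\oo_P(2))$ is already surjective, so $\mathbf{r}_P$ is surjective precisely when $H^0(\PP(T_U),\mathcal I_{\mathcal C_U}(2))+H^0(\PP(T_U),\mathcal I_P(2))=H^0(\PP(T_U),\oo(2))$. Dualizing inside $H^0(\PP(T_U),\oo(2))^\vee=\Sym^2 T_U$, and using that for any closed $X\subseteq\PP(T_U)$ one has $H^0(\mathcal I_X(2))^\perp=\langle v^{\odot 2}\mid [v]\in X\rangle$, this becomes: $\mathbf{r}_P$ is surjective iff $\langle v^{\odot 2}\mid[v]\in\mathcal C_U\rangle\cap\Sym^2 V_P=0$, where $P=\PP(V_P)$.

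Next I would make the first subspace explicit. Writing $T_U=\wedge^3U\oplus(\wedge^2U\otimes(W/U))$, identifying $\wedge^3U\cong\C$ and $\wedge^2U\otimes(W/U)\cong\Hom(U,W/U)$, the cone $\mathcal C_U$ is the cone with vertex $[U]=[\wedge^3U]$ over the rank‑$\le1$ locus in $\PP(\Hom(U,W/U))$. With respect to $\Sym^2 T_U=\C\oplus\Hom(U,W/U)\oplus\big(\Sym^2U^\vee\otimes\Sym^2(W/U)\big)\oplus\big(\wedge^2U^\vee\otimes\wedge^2(W/U)\big)$ a direct check gives that $\langle v^{\odot2}\mid[v]\in\mathcal C_U\rangle$ is the sum of the first three summands — i.e. the kernel of the projection $\pi\colon\Sym^2 T_U\to\wedge^2U^\vee\otimes\wedge^2(W/U)$ — and that $\pi(v^{\odot2})=\wedge^2(\mathrm{pr}\,v)$, with $\mathrm{pr}\colon T_U\to\Hom(U,W/U)$ the projection. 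Since the vertex $[U]\in\mathcal C_U$ is disjoint from $P$, $\mathrm{pr}$ is injective on $V_P$; putting $V':=\mathrm{pr}(V_P)\subseteq\Hom(U,W/U)$ (so $\dim V'=\dim P+1\le3$), this identifies surjectivity of $\mathbf{r}_P$ with injectivity of the second–compound map $c\colon\Sym^2 V'\to\wedge^2U^\vee\otimes\wedge^2(W/U)$, $\phi^{\odot2}\mapsto\wedge^2\phi$; and $P\cap\mathcal C_U=\emptyset$ becomes: every nonzero element of $V'$ has rank $\ge2$.

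It then remains to prove: if $V'\subseteq\Hom(U,W/U)\cong\mathrm{Mat}_3(\C)$ has $\dim V'\le3$ and all nonzero elements of rank $\ge2$, then $c|_{\Sym^2 V'}$ is injective (reading $\wedge^2\phi$ as the adjugate $\operatorname{adj}\phi$, of rank $\binom{\rk\phi}{2}$). Suppose $0\ne\xi\in\ker(c|_{\Sym^2 V'})$ and diagonalize $\xi=\phi_1^{\odot2}+\dots+\phi_\rho^{\odot2}$ with $\rho=\rk\xi\ (\le\dim V')$ and the $\phi_i\in V'$ independent. If $\rho=1$ then $\operatorname{adj}\phi_1=0$ forces $\rk\phi_1\le1$, impossible. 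If $\rho=2$ then $\operatorname{adj}\phi_1=-\operatorname{adj}\phi_2$: the ranks of $\phi_1,\phi_2$ cannot be $2$ and $3$; both of rank $3$ gives $\phi_1=\pm i\phi_2$ (the adjugate is injective up to sign on invertible matrices); and both of rank $2$ gives $\ker\phi_1=\ker\phi_2$, $\operatorname{im}\phi_1=\operatorname{im}\phi_2$, so an eigenvalue of $\phi_1|_{U/\ker\phi_1}\circ(\phi_2|_{U/\ker\phi_1})^{-1}$ yields a nonzero rank‑$\le1$ element of $V'$ — all excluded by hypothesis. Hence $\rho=3$, so $\dim V'=3$, $\{\phi_i\}$ is a basis, and $\operatorname{adj}\phi_1+\operatorname{adj}\phi_2+\operatorname{adj}\phi_3=0$. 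Now I would split on whether $\det$ vanishes identically on $V'$. If it does, each $\operatorname{adj}\phi_i=k_i\otimes\ell_i$ has rank $1$ and $\sum k_i\otimes\ell_i=0$ forces the $\phi_i$, hence all of $V'$, to share a common kernel line or a common image plane; then $V'$ embeds into a $6$‑dimensional space of $3\times2$ (resp. $2\times3$) matrices, where $\PP(V')=\PP^2$ must meet the threefold rank‑$\le1$ locus of $\PP^5$, producing a rank‑$\le1$ element of $V'$. If $\det\not\equiv0$ on $V'$, normalise an invertible element of $V'$ to $I$ and write $V'=\langle I,B,C\rangle$; using $\operatorname{adj}(I\mid\psi)=\tfrac12(\operatorname{tr}(\psi)\,I-\psi)$, the relation $c(\xi)=0$ reduces to a nonzero $\eta\in\Sym^2\langle B,C\rangle$ with $c(\eta)\in V'$, and writing $c(\eta)=\operatorname{adj}\chi_1$ (or $\operatorname{adj}\chi_1+\operatorname{adj}\chi_2$) with $\chi_i\in\langle B,C\rangle$ this either exhibits a rank‑$\le1$ element of $V'$ directly or forces the inverses $\chi_i^{-1}$ into $V'$ — whereupon $\{\det|_{V'}=0\}\subseteq\PP^2$ degenerates into lines whose pairwise intersections are rank‑$\le1$ points of $V'$. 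In every case the hypothesis is contradicted, so $c|_{\Sym^2 V'}$ is injective and $\mathbf{r}_P$ is surjective.

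I expect the non‑degenerate $\rho=3$ sub‑case to be the main obstacle. It is the only point where the hypothesis $\dim P\le2$ is genuinely used — for $\dim V'\ge4$ the domain $\Sym^2 V'$ has dimension $\ge10$ and cannot inject into the $9$‑dimensional target, so $c|_{\Sym^2 V'}$ is never injective — and, unlike the $\det\equiv0$ case, it is not settled by a dimension count forcing $\PP(V')$ to meet a rank locus, so it requires the explicit normalisation and the analysis of the determinantal cubic $\det|_{V'}$ sketched above.
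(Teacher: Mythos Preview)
Your reduction is correct and, in fact, lands exactly on the reformulation the paper uses implicitly: writing the quadrics through $\mathcal C_U$ as the $2\times 2$ minors, the map $[\phi]\mapsto[\wedge^2\phi]=[\operatorname{adj}\phi]$ is precisely the Cremona transformation $\mathrm{Cr}$ of $\PP^8$ the paper invokes, and your injectivity of $c|_{\Sym^2 V'}$ is equivalent to $\mathrm{Cr}(\PP(V'))$ spanning a $\PP^5$, i.e.\ being a linearly normal Veronese surface. From that common point the two arguments diverge: the paper uses the birational geometry of $\mathrm{Cr}$ (it is an involution, its fibers are linear, it contracts the secant cubic) to rule out a span of $\PP^4$, while you attempt a direct matrix case analysis. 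Your $\rho=1,2$ cases and the $\rho=3$, $\det\equiv 0$ case are fine (for the latter: a $3$-dimensional subspace of $2\times 3$ matrices always contains a rank-$1$ element, since the determinant detecting $V'\cap M_{[v]}\neq 0$ is an odd-degree form on $\PP^1$).

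The gap is in the $\rho=3$, $\det\not\equiv 0$ sub-case when your auxiliary tensor $\eta\in\Sym^2\langle B,C\rangle$ has rank $2$. From $c(\eta)=\operatorname{adj}\chi_1+\operatorname{adj}\chi_2\in V'$ with both $\chi_i$ invertible you only obtain $\det(\chi_1)\chi_1^{-1}+\det(\chi_2)\chi_2^{-1}\in V'$, i.e.\ that $\chi_1^{-1}$ and $\chi_2^{-1}$ are linearly dependent modulo $V'$; this does not force either $\chi_i^{-1}$ into $V'$ individually, so your subsequent ``the cubic $\{\det|_{V'}=0\}$ splits into three lines'' argument (which genuinely needs $I,\chi,\chi^{-1}$ to span $V'$) does not apply. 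Equivalently, using $\operatorname{adj}(M)=e_2(M)I-(\operatorname{tr}M)M+M^2$, the condition $c(\eta)\in V'$ reads $\chi_1^2+\chi_2^2\in\langle I,\chi_1,\chi_2\rangle$, and it is not clear how your sketch extracts a rank-$\le 1$ element from this single relation in the non-commuting case. The paper's Cremona argument handles exactly this residual case without further subdivision, at the cost of importing some birational geometry; if you want to stay purely linear-algebraic you will need an additional idea here.
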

    \begin{proof} We may restrict to the case when  $P$ is a plane. Since % the tangent cone 
    ${\mathcal{C}_{U}}\subset\PP(T_{U}))\cap G(3,6)$ is projectively equivalent to the cone over $\PP^2\times \PP^2$ in its Segre embedding, it suffices to show that if $P\subset \PP^8$ is a plane that do not intersect 
    $\PP^2\times \PP^2\subset \PP^8$, then the Cremona transformation $\operatorname{Cr}$ on $\PP^8$ defined by the quadrics containing $\PP^2\times \PP^2$ maps $P$  to a linearly normal Veronese surface.  
    Note that the ideal of $\PP^2\times \PP^2\subset \PP^8$ is defined $2\times 2$ minors of a $3\times 3$ matrix with linear forms in $\PP^8$ and its secant by the determinant of this matrix. Since the first syzygies between the generators of this ideal are generated by linear ones we infer from \cite[Proposition~3.1]{AR} that they define a birational map.
     Moreover this Cremona transformation contracts the secant determinantal cubic hypersurface $V_3$, to a $\PP^2\times \PP^2$, so the the inverse Cremona is of the same kind.  Furthermore, the fibers of the map $V_3\to \PP^2\times \PP^2$ are $3$-dimensional linear spaces spanned by quadric surfaces in $\PP^2\times \PP^2$.  Now, by assumption, $P$ does not intersect $\PP^2\times \PP^2$, so the restriction $\operatorname{Cr}|_P$ is a regular, hence finite, morphism. Since the fibers of the Cremona transformation are linear, $P$ intersects each fiber in at most a single point, so the restriction $\operatorname{Cr}|_P$ is an isomorphism.   Thus, if  $\operatorname{Cr}(P)$ is not linearly normal, the linear span $\langle \operatorname{Cr}(P)\rangle$ is a $\PP^4$, being a smooth projected Veronese surface.  Assume this is the case.  Then $\operatorname{Cr}(P)$ is not contained in any quadric. Since the quadrics that define the inverse Cremona, map $\operatorname{Cr}(P)$ to the plane $P$, these quadrics form only a net, when restricted to the $4$-dimensional space $\langle \operatorname{Cr}(P)\rangle$.  In fact the complement of $\PP^2\times \PP^2\cap \langle \operatorname{Cr}(P)\rangle$ in $\langle \operatorname{Cr}(P)\rangle$ is mapped to $P$ by the inverse Cremona transformation.  Therefore $\langle \operatorname{Cr}(P)\rangle$ must be contained in the cubic hypersurface that is contracted by this inverse Cremona.  Since this hypersurface is contracted to the original $\PP^2\times \PP^2$, we infer that $P$ is contained in $\PP^2\times \PP^2$.  This contradicts our assumption and concludes our proof.
    % (   This is proven by a Macaulay 2 computation for any $P$ such that $\PP(P)\cap \mathcal{C}_{\mu}=\emptyset$.)
    \end{proof}
  \begin{lem} \label{tangent cone} Let $K=A\cap T_{U_0}=\ker Q_A\subset T_{U_0}$ and assume that $k=\dim K\leq 3$. %Assume that $k_0\leq 3$. 
  Then for any $l\leq k$ the tangent cone $\mathfrak{C}^l_{A,U_0}$ of $D_l^A\cap \mathfrak{U}$ at $U_0$ is linearly isomorphic to a cone over the corank $l$ locus  of quadrics in $\PP(H^0(\PP (K),\mathcal{O}_{\PP (K)}(2)))$.
  \end{lem}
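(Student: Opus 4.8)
The plan is to reduce, around $[U_0]$, the degeneracy locus $D_l^A$ to a standard symmetric determinantal variety by the usual Schur‑complement trick, and then read off the tangent cone. I work in the affine chart $\mathfrak U\cong\Hom(U_0,U_\infty)$, where by Lemma~\ref{quadricsC_v} and the discussion following it, $D_l^A\cap\mathfrak U$ is cut out by the $(11-l)\times(11-l)$ minors of the symmetric $10\times10$ matrix of functions $N(B):=Q_U-Q_A$, with $N(0)=-Q_A$. Since $\ker Q_A=K$, the quadratic form $N(0)$ has radical exactly $K$; fixing a linear complement $K'$ of $K$ in $T_{U_0}$, the block $N(0)|_{K'\times K'}$ is nondegenerate and $N(0)|_{K\times T_{U_0}}=0$. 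In the induced block decomposition $N(B)=\left(\begin{smallmatrix}P(B)&S(B)\\ S(B)^{t}&R(B)\end{smallmatrix}\right)$ one has $P(0)=0$, $S(0)=0$ and $R(B)$ invertible over $\oo_{\mathfrak U,[U_0]}$, so a congruence with $\left(\begin{smallmatrix}I&-SR^{-1}\\0&I\end{smallmatrix}\right)$ puts $N(B)$ in the form $\operatorname{diag}(\tilde P(B),R(B))$ with Schur complement $\tilde P(B)=P(B)-S(B)R(B)^{-1}S(B)^{t}$, a symmetric $k\times k$ matrix of functions vanishing at $[U_0]$. As the ideal of $t\times t$ minors is invariant under multiplication by invertible matrices and $R$ is an invertible block of size $10-k$, the ideal of $D_l^A$ in $\oo_{\mathfrak U,[U_0]}$ equals the ideal $I_{k+1-l}(\tilde P(B))$ of $(k+1-l)\times(k+1-l)$ minors of $\tilde P(B)$.

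The key point is to control the linear part of $\tilde P$. Since $S(0)=0$, the term $SR^{-1}S^{t}$ is of order $\ge 2$, so the linear part of $\tilde P(B)$ equals that of $P(B)=N(B)|_{K\times K}$, namely $\mathbf r_{\PP(K)}\bigl(d\iota_{[U_0]}(B)\bigr)\in\Sym^2K^\vee=H^0(\PP(K),\oo_{\PP(K)}(2))$, where $d\iota_{[U_0]}\colon T_{[U_0]}G(3,W)=\Hom(U_0,U_\infty)\to\Sym^2T_{U_0}^\vee$ is the differential of $\iota$. By formula~\eqref{eqquadrics}, $d\iota_{[U_0]}$ sends the homomorphism with matrix $(b_{i,j})$ to the quadric $\sum_{i,j}b_{i,j}M^{i,j}$; the nine cofactors of the generic $3\times3$ matrix are linearly independent and span the nine‑dimensional space $H^0(\PP(T_{U_0}),\mathcal I_{\mathcal C_{U_0}}(2))$ (just as for $\PP^2\times\PP^2\subset\PP^8$), so $d\iota_{[U_0]}$ is an isomorphism onto it. Since $\PP(A)\cap G(3,W)=\emptyset$ we have $\PP(K)\subset\PP(T_{U_0})\setminus G(3,6)$ with $\dim\PP(K)=k-1\le 2$, hence Lemma~\ref{restriction of quadrics to K} applies and $\mathbf r_{\PP(K)}$ is surjective. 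Therefore $\lambda:=\mathbf r_{\PP(K)}\circ d\iota_{[U_0]}\colon T_{[U_0]}G(3,W)\to\Sym^2K^\vee$ is a linear surjection and $\tilde P(B)=\lambda(B)+(\text{terms of order}\ge 2)$.

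It then remains to identify the tangent cone. Choosing a splitting of $\lambda$, take linear coordinates on $\mathfrak U$ in which $\lambda$ reads off the entries of a symmetric matrix $Z$ of coordinates; then $\tilde P(B)=Z+(\text{order}\ge 2)$, and replacing the coordinates occurring in $Z$ by the entries of $\tilde P(B)$ gives, by the inverse function theorem (the corrections lying in $\mathfrak m^2$, the Jacobian at $[U_0]$ is the identity), a formal isomorphism $\widehat\oo_{\mathfrak U,[U_0]}\cong\C[[y_1,\dots,y_9]]$ in which the ideal of $D_l^A$ becomes $I_{k+1-l}(Y)$, where $Y$ is the generic symmetric $k\times k$ matrix in $c':=\binom{k+1}{2}$ of the $y_i$. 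Being homogeneous, this ideal is its own initial ideal, so the associated graded ring is $\C[y_1,\dots,y_9]/I_{k+1-l}(Y)$ and $\mathfrak C^l_{A,U_0}$ is its zero locus in $T_{[U_0]}G(3,W)$. Since the entries of $Y$ have linear parts equal to the entries of $\lambda$, this zero locus is $\lambda^{-1}(\mathcal Z)$, where $\mathcal Z\subset\Sym^2K^\vee$ is the affine cone over the corank‑$l$ locus of quadrics in $\PP\bigl(H^0(\PP(K),\oo_{\PP(K)}(2))\bigr)$; being the preimage under the linear surjection $\lambda$, it is linearly isomorphic to a cone over that locus (with vertex $\ker\lambda$), which is the assertion. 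The one genuinely non‑formal ingredient is Lemma~\ref{restriction of quadrics to K}, used to get the surjectivity of $\lambda$; without it the tangent cone could a priori be a proper linear slice of the determinantal cone rather than a cone over the full corank locus, so that is the step I expect to be the real obstacle.
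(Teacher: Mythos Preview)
Your proof is correct and the overall strategy is the same as the paper's---reduce to the restriction of the linear part of $Q_U$ to $K$ and invoke Lemma~\ref{restriction of quadrics to K}---but the implementation differs in a meaningful way. The paper does not take the Schur complement; instead it diagonalises $Q_A$ so that the matrix is $-R_k+N^\Lambda(B_U)+Z(B_U)$ with $R_k=\operatorname{diag}(0,\dots,0,1,\dots,1)$, and then reads off directly that every initial form of an $(11-l)$-minor is a $(k+1-l)$-minor of the upper-left $k\times k$ linear block $N_{\mathbf k}^\Lambda$. This gives only the \emph{containment} $\mathfrak C^l_{A,U_0}\subset\hat{\mathfrak C}^l_{A,U_0}$ (the preimage of the corank-$l$ locus under $\mathbf r_K\circ\iota^N$); equality is then forced by a dimension argument, using that the symmetric determinantal variety is irreducible of codimension $\tfrac{l(l+1)}{2}$, which matches the expected codimension of $D_l^A$.

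Your Schur-complement route is sharper: after the congruence by $\left(\begin{smallmatrix}I&-SR^{-1}\\0&I\end{smallmatrix}\right)$ and the formal coordinate change making the entries of $\tilde P$ into coordinates, you get an \emph{equality} of ideals with the homogeneous determinantal ideal, so the tangent cone is identified on the nose and no dimension comparison is needed. The price is the extra machinery (Schur complement, analytic/formal coordinate change); the paper's price is the final irreducibility/codimension check. Both hinge on the same nontrivial input, Lemma~\ref{restriction of quadrics to K}, and you correctly flagged that as the crucial step.
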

  \begin{proof} We follow the idea of \cite[Proposition 1.9]{OGradyTaxonomy}. If we choose a basis $\Lambda$ of $T_{U_0}^{\vee}$, the symmetric linear map $Q_U$ is defined by a symmetric matrix $M^{\Lambda}(B_U)$ with entries being polynomials in $(b_{i,j})_{i,j\in \{1\dots 3\}}$.
  
%  matrix Observe that by choosing a basis $\Lambda$ of $T_{U_0}^{\vee}$ we can think of the map $\iota: \mathfrak{U}\ni [U]\to Q_U\in Sym^2 T_{U_0}^{\vee}$ as a symmetric matrix $M^{\Lambda}(B_U)$ with entries being polynomials in coordinates $(b_{i,j})_{i,j\in \{1\dots 3\}}$. 
  
  The linear summands of each entry in $M^{\Lambda}(B_U)$ form a matrix that we denote by  $N^{\Lambda}(B_U)$.
  % with entries given by linear parts of entries of $M^{\Lambda}(B_U)$. 
  Since $Q_0=0$, the entries of $M^{\Lambda}(B_U)$ have no nonzero constant terms. 
 Moreover, by using Lemma \ref{quadricsC_v} and $\Lambda_0=(m_0,M)$, we see that the map $\mathfrak{U}\ni U\mapsto q'_U\in Sym^2 T_{U_0}^{\vee}$, where  $q'_U$ is the quadratic form corresponding to the symmetric map defined by the matrix $N^{\Lambda_0}(B_U)$, maps $\mathfrak{U}$ linearly onto the linear system of quadrics containing the cone $\mathcal{C}_{U_0}$. Of course, this surjection is independent of the choice of basis.

 We now choose a basis $\Lambda$ in $T_{U_0}$ in which $Q_A$ is represented by a diagonal matrix $R_{k}={\rm diag}\{0\dots 0, 1\dots 1\}$ with $k$ zeros in the diagonal.  
% Since the 
% matrix $N^{\Lambda}$ arises from $N^{\Lambda_0}$ by change of basis it follows that the map $\iota^N: \mathfrak{U}\ni [U]\to Q'_U\in Sym^2 T_{U_0}^{\vee}$ associated to $N^{\Lambda}(B_U)$ still maps $\mathfrak{U}$ linearly onto the linear system of quadrics containing the cone $\mathcal{C}_{U_0}$. 
 Then  \begin{equation*}
 \begin{split}
 D_l^A\cap\mathfrak{U}=\{[U]\in \mathfrak{U}| \dim (T_U\cap A)\geq l \}=\{[U]\in \mathfrak{U} | \dim \operatorname{ker} (Q_U-Q_A) \geq l \}\\=\{[U]\in \mathfrak{U}| \operatorname{rank} (M^{\Lambda}(B_U)-R_{k}) \leq 10-l \}.
 \end{split}
 \end{equation*}
 Hence $D_l^A$ is defined in coordinates $(b_{i,j})_{i,j\in \{1\dots 3\}}$ on $\mathfrak{U}$ by $(11-l)\times (11-l)$ minors of the matrix  $M^{\Lambda}(B_U)-R_{k}$. 
Furthermore, since $[U_0]$ is the point $0$ in our coordinates $(b_{i,j})_{i,j\in \{1\dots 3\}}$, the tangent cone to $D_l^A\cap\mathfrak{U}$ at  $[U_0]$ is defined by the initial terms of the $(11-l)\times (11-l)$ minors of $M^{\Lambda}(B_U)-R_{k}$. 
Note that we can write 
$$M^{\Lambda}(B_U)-R_{k}=-R_{k}+N^{\Lambda}(B_U)+ Z(B_U),$$
where the entries of the matrix $Z(B_U)$ are polynomials with no linear or constant terms. We illustrate this decomposition as follows.
%{\tiny\[
%\left(
%\begin{array}{c|c}
%  N_{\mathbf{k}}^{\Lambda}+ Z_{\mathbf{k}} 
%& \begin{array}{cccc} N_{1,k+1}^{\Lambda}+ Z_{1,k+1}^{\Lambda} & \dots & N_{1,10}^{\Lambda}+ Z_{1,10}^{\Lambda} \\
%\vdots &\ddots&\vdots\\
%N_{k,k+1}^{\Lambda}+ Z_{k,k+1}^{\Lambda}& \dots & N_{k,10}^{\Lambda}+ Z_{k,10}^{\Lambda}
%\end{array} \\ \hline
%  \begin{array}{ccc} N_{k+1,1}^{\Lambda}+ Z_{k+1,1}^{\Lambda} & \dots & N_{k+1,k}^{\Lambda}+ Z_{k+1,k}^{\Lambda} \\
%\vdots &\ddots&\vdots\\
%N_{10,1}^{\Lambda}+ Z_{10,1}^{\Lambda} & \dots & N_{10,k}^{\Lambda}+ Z_{10,k}^{\Lambda}

%  \end{array} & \begin{array}{cccc}  
%1+   N_{k+1,k+1}^{\Lambda}+ Z_{k+1,k+1}^{\Lambda}& N_{k+1,k+2}^{\Lambda}+ Z_{k+1,k+2}^{\Lambda}&\dots & N_{k+1,10}^{\Lambda}+ Z_{k+1,10}^{\Lambda} \\
%N_{k+2,k+1}^{\Lambda}+ Z_{k+2,k+1}^{\Lambda}&1+   N_{k+2,k+2}^{\Lambda}+ Z_{k+2,k+2}^{\Lambda}&\dots & N_{k+2,10}^{\Lambda}+ Z_{k+2 ,10}^{\Lambda}\\
%\vdots & &\ddots & \vdots\\
%N_{10,k+1}^{\Lambda}+ Z_{10,k+1}^{\Lambda} & \dots & \dots & 1+ N_{10,10}^{\Lambda}+ Z_{10,10}^{\Lambda} 

%  \end{array}\\
%\end{array}
%\right)
%\]}

{\tiny
\[
\left(
\begin{array}{c|c}
  N_{\mathbf{k}}^{\Lambda}+ Z_{\mathbf{k}} 
& \begin{array}{cccc} N_{1,k+1}^{\Lambda}+ Z_{1,k+1}^{\Lambda} & \dots & N_{1,10}^{\Lambda}+ Z_{1,10}^{\Lambda} \\
\vdots &\ddots&\vdots\\
N_{k,k+1}^{\Lambda}+ Z_{k,k+1}^{\Lambda}& \dots & N_{k,10}^{\Lambda}+ Z_{k,10}^{\Lambda}
\end{array} \\ \hline
  \begin{array}{ccc} N_{k+1,1}^{\Lambda}+ Z_{k+1,1}^{\Lambda} & \dots & N_{k+1,k}^{\Lambda}+ Z_{k+1,k}^{\Lambda} \\
\vdots &\ddots&\vdots\\
N_{10,1}^{\Lambda}+ Z_{10,1}^{\Lambda} & \dots & N_{10,k}^{\Lambda}+ Z_{10,k}^{\Lambda}

  \end{array} & \begin{array}{cccc}  
-1+   N_{k+1,k+1}^{\Lambda}+ Z_{k+1,k+1}^{\Lambda}&\dots & N_{k+1,10}^{\Lambda}+ Z_{k+1,10}^{\Lambda} \\
%N_{k+2,k+1}^{\Lambda}+ Z_{k+2,k+1}^{\Lambda}&\dots & N_{k+2,10}^{\Lambda}+ Z_{k+2 ,10}^{\Lambda}\\
\vdots  &\ddots & \vdots\\
N_{10,k+1}^{\Lambda}+ Z_{10,k+1}^{\Lambda}  & \dots & -1+ N_{10,10}^{\Lambda}+ Z_{10,10}^{\Lambda} 

  \end{array}\\
\end{array}
\right)
\]
}
Let $\Phi$ be an  $(11-l)\times (11-l)$ minor of $M^{\Lambda}(B_U)-R_{k}$ and consider its decomposition $\Phi=\Phi_0+\dots +\Phi_r$ 
into homogeneous parts $\Phi_d$ of degree $d$.
Observe that $\Phi_d=0$ for $d\leq {k}-l$, moreover $\Phi_{{k}-l+1}$ can be nonzero only if the sub matrix associated to the minor $\Phi$ contains all nonzero entries of $R_k$.
In the latter case  $\Phi_{{k}-l+1}$ is a $({k}+1-l)\times ({k}+1-l)$ minor of the ${k}\times {k} $ upper left corner sub matrix $N_{\mathbf{k}}^{\Lambda}(B_U)$ of the matrix $N^{\Lambda}(B_U)$. Let us now denote by $q'_U$ the quadric corresponding to the matrix $N^{\Lambda}(B_U)$ and by $\iota^N$ the map $U\mapsto q'_U$.
Then, by changing $\Phi$ we get that  the tangent cone of $D_l^A\cap\mathfrak{U}$ is contained in:
$$\hat{\mathfrak{C}}^l_{A,U_0}:=\{[U]\in \mathfrak{U} | \operatorname{rank} (N_{\mathbf{k}}^{\Lambda}(B_U)) \leq {k}-l\}=
\{[U]\in \mathfrak{U} | \operatorname{rank}(q'_U|_{K})\leq {k}-l\}.$$
The latter is the preimage by $\mathbf{r}_{K}\circ \iota^N$ of the corank $l$ 
locus in the projective space of quadrics $\PP( H^0(\PP (K),\mathcal{O}_{\PP (K)}(2)))$. By Lemma \ref{restriction of quadrics to K}, we have seen that 
$\mathbf{r}_{K}\circ \iota^N$ is a linear surjection.  So we conclude that $\hat{\mathfrak{C}}^l_{A,U_0}$ is a cone over the corank $l$ locus of quadrics in $\PP(H^0( \PP (K), \mathcal{O}_{\PP (K)}(2)))$ with vertex a linear space of dimension $10-\frac{k(k+1)}{2}$. 
It follows that $\hat{\mathfrak{C}}^l_{A,U_0}$ is an irreducible variety of codimension $\frac{l(l+1)}{2}$ equal to the codimension  of $D_l^A$. Thus we have equality $\mathfrak{C}^l_{A,U_0}=\hat{\mathfrak{C}}^l_{A,U_0}$ which ends the proof.
\end{proof}
  
\begin{cor} \label{cor of 2.9} If $A$ is a Lagrangian space in $\wedge^3 W$, such that $\PP(A)$ doesn't meet $G(3,W)$, then the variety $D_l^A$ is smooth of the expected codimension $\frac{l(l+1)}{2}$ outside $D^A_{l+1}$.  
Moreover, if $l=2$ and $\dim  A\cap T_{U_0}=3$, i.e. $[U_0]$ is a point in  $D^A_3\setminus D^A_4$, then the tangent cone $\mathfrak{C}^2_{A,U_0}$ is a cone over the Veronese surface in $\PP^5$ centered in the  tangent space of $D^A_3$.
\end{cor}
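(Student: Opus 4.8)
The plan is to read off both statements directly from Lemma \ref{tangent cone}, which identifies the tangent cone $\mathfrak{C}^l_{A,U_0}$ of $D_l^A$ at any point $[U_0]$ with a cone over the corank-$l$ locus of quadrics in $\PP(H^0(\PP(K),\mathcal{O}_{\PP(K)}(2)))$, where $K=A\cap T_{U_0}$ has dimension $k$, the vertex being a linear space of dimension $10-\tfrac{k(k+1)}{2}$.

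For the first assertion I would argue as follows. Fix $l$ and let $[U_0]\in D_l^A\setminus D_{l+1}^A$, so that $k=\dim(A\cap T_{U_0})=l$ exactly. Then $K$ has dimension $l$, so $\PP(K)\cong\PP^{l-1}$ and $\PP(H^0(\PP(K),\mathcal{O}_{\PP(K)}(2)))$ is the space of quadrics in $l-1$ variables, of projective dimension $\tfrac{l(l+1)}{2}-1$; its corank-$l$ locus is the single point corresponding to the zero quadric. Hence the corank-$l$ locus is a reduced point, and by Lemma \ref{tangent cone} the tangent cone $\mathfrak{C}^l_{A,U_0}$ is a linear space of dimension $10-\tfrac{l(l+1)}{2}$. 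A variety whose tangent cone at a point is a linear space of dimension equal to $\dim D_l^A$ is smooth at that point (the tangent cone being reduced and irreducible of the expected dimension forces the local ring to be regular), and $\dim G(3,W)=9$ gives codimension $\tfrac{l(l+1)}{2}$ as claimed. One should also invoke the hypothesis $\PP(A)\cap G(3,W)=\emptyset$ to guarantee that Lemma \ref{tangent cone} applies at every point of $D_l^A$ (it ensures $T_U\cap A$ never fills up and keeps us in the open neighbourhood $\mathfrak{U}$ after a suitable choice of basis).

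For the second assertion, take $l=2$ and $[U_0]\in D_3^A\setminus D_4^A$, so $k=\dim(A\cap T_{U_0})=3$. Now $\PP(K)\cong\PP^2$ and $\PP(H^0(\PP(K),\mathcal{O}_{\PP(K)}(2)))\cong\PP^5$ is the space of conics in $\PP^2$; its corank-$2$ locus is precisely the locus of rank-$1$ conics, i.e.\ the Veronese surface $v_2(\PP^2)\subset\PP^5$. By Lemma \ref{tangent cone} the tangent cone $\mathfrak{C}^2_{A,U_0}$ is therefore a cone over this Veronese surface, with vertex a linear space of dimension $10-\tfrac{3\cdot 4}{2}=4$. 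It remains to identify this vertex with the tangent space of $D_3^A$ at $[U_0]$: since $D_3^A$ is the corank-$3$ (equivalently rank-$0$) locus inside the same matrix family, its tangent cone at $[U_0]$ is, again by Lemma \ref{tangent cone} with $l=3$, exactly the linear space that is the common vertex of the cone structure, and by the first part $D_3^A$ is smooth there of dimension $9-6=3$ — but one checks that the linear summands $N^\Lambda_{\mathbf k}$ vanishing identically is the defining condition both for the vertex of the cone and for the tangent space to $D_3^A$, so the two coincide.

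I expect the only genuinely delicate point to be this last identification of the vertex with $T_{[U_0]}D_3^A$: everything else is the standard dictionary between symmetric-matrix degeneracy loci and the corresponding loci of quadrics (here conics in $\PP^2$), for which Lemma \ref{tangent cone} has already done the real work. One must be slightly careful that the "corank-$2$ locus of conics in $\PP^2$" really is the Veronese and not something larger — this is the classical fact that a conic of rank $1$ is a double line — and that the cone in Lemma \ref{tangent cone} is taken with the correct vertex dimension $10-\tfrac{k(k+1)}{2}$, which for $k=3$ gives the $\PP^4$ one wants after projectivizing minus one.
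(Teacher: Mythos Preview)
Your approach is correct and is exactly what the paper intends: the corollary is left without explicit proof, as an immediate consequence of Lemma~\ref{tangent cone}. Your reading of the $k=l$ case (tangent cone a linear space, hence a smooth point of the expected dimension) and of the $k=3$, $l=2$ case (corank-$2$ conics in $\PP^2$ form the Veronese surface $v_2(\PP^2)\subset\PP^5$) is right, and identifying the vertex with $T_{[U_0]}D_3^A$ by applying the same lemma with $l=3$ is the natural and correct argument.

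One small numerical slip to clean up: the vertex of the cone has affine dimension $9-\tfrac{k(k+1)}{2}$, not $10-\tfrac{k(k+1)}{2}$, since $\mathfrak{U}\cong\Hom(U_0,U_\infty)$ is $9$-dimensional and $\mathbf{r}_K\circ\iota^N$ is a linear surjection onto the $\tfrac{k(k+1)}{2}$-dimensional space $\Sym^2 K^\vee$. (The paper's proof of Lemma~\ref{tangent cone} writes ``$10$'' here, which appears to be a misprint.) For $k=3$ this gives a $3$-dimensional vertex, matching your own computation $\dim D_3^A=9-6=3$; as written, your stated vertex dimension $4$ contradicts this. Once that off-by-one is corrected, the identification of the vertex with $T_{[U_0]}D_3^A$ is immediate and nothing delicate remains.
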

 \begin{proof} [Proof of Proposition \ref{transversality}] It is clear from Lemma \ref{quadricsC_v} that $\iota$ is a local isomorphism into its image, and by  Corollary \ref{cor of 2.9}, the subscheme $D_A^k =\iota^{-1}(\iota(G(3,W))\cap\mathbb{D}_A^k)$ is smooth outside $D^{k+1}_A$, so $\iota (G(3,W)$ meets the degeneracy loci transversally.
\end{proof}

\subsection{Invariants} \label{invariants}
We shall compute the classes of the Lagrangian degeneracy loci 
$D^A_k\subset G(3,W)$ in the Chow ring of $G(3,W)$.  We consider the embedding  $\iota: G(3,W)\to LG_\eta(10,\wedge^3 W)$ defined by the bundle of Lagrangian subspaces $\mathcal{T}$ on $G(3,W)$.  According to  \cite[Theorem 2.1]{PragaczRatajski} the fundamental classes of the Lagrangian degeneracy loci 
$D^A_k$ are
\[
[D^A_1]=[c_1({\mathcal T}^{\vee})\cap G(3,W)] , \quad [D^A_2]=[(c_2c_1-2c_3)({\mathcal T}^{\vee})\cap G(3,W)]
\]
and 
\[
[D^A_3]=[(c_1c_2c_3-2c_1^2c_4+2c_2c_4+2c_1c_5-2c_3^2)({\mathcal T}^{\vee})\cap G(3,W)].
\]
The $\PP^9$-bundle $\PP(\mathcal{T})$ is the projective tangent bundle on $G(3,W)$.
So $\mathcal{T}^{\vee}$ fits into an exact sequence
\[
0\to\Omega_{G(3,W)}(1)\to \mathcal{T}^{\vee} \to \mathcal{O}_{G(3,W)}(1)\to 0
\]
 and we get
\[
\deg D^A_1=168,  \quad \deg D^A_2=480,\quad \deg D^A_3=720
\]
\begin{rem}
This may be compared with the degree of the line bundle $2H-3E$ on $S^{[3]}$, where $S$ is a K3 surface of degree 10, $H$ is the pullback of the line bundle of degree $10$ on $S$, and $E$ is the unique divisor class such that the divisor of non-reduced subschemes in $S^{[3]}$ is equivalent to $2E$.   The degree, i.e. the value of the Beauville Bogomolov form is $q(2H-3E)=4$, and the degree and the Euler-Poincare characteristic of the line bundle is 
\[
(2H-3E)^6=15 q(2H-3E)^3=960 \quad {\rm and} \quad\chi(2H-3E)=10.
\]
So if the map defined by $|2H-3E|$ is a morphism of degree $2$, the image would have degree $480$, like $D^A_2$.
\end{rem}
In the section \ref{special-special}, we show that $S^{[3]}$ for a general $K3$-surface $S$ of degree $10$, admits a rational double cover of a degeneracy locus $D^A_2$.  However that double cover is not a morphism.

\section{The double cover of an EPW cube}
 
 \begin{prop} \label{existence and smoothness of double cover} Let $[A]\in LG_\eta(10,\wedge^3W)$.  If $\mathbb{P}(A)\cap G(3,W)=\emptyset$ and $D^A_4=\emptyset$, then $D^A_2$ admits a double cover $f: Y_A\to D^A_2$ 
  branched over $D^A_3$  with $Y_A$ a smooth irreducible manifold having trivial canonical class. 
 \end{prop}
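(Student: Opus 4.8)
## Proof plan for Proposition \ref{existence and smoothness of double cover}

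The plan is to build the double cover as the relative spectrum of a sheaf of algebras on $D^A_2$ whose square-root structure comes from a natural $\oo_{D^A_2}$-module $\mathcal{F}$ equipped with a symmetric map to $\oo_{D^A_2}$ vanishing to order one on $D^A_3$; this is the exact analogue of O'Grady's construction of the double cover of an EPW sextic, now carried out on the Lagrangian degeneracy locus $D^A_2\subset G(3,W)$. Concretely, on the resolution $\tilde{\mathbb{D}}^A_2\to \mathbb{D}^A_2$ studied in Section \ref{general lagrangian loci} one has the tautological bundles and the map $\phi^*\mathcal{L}\cap \pi^*\mathcal Q$ realizing the degeneracy; pulling everything back under the embedding $\iota:G(3,W)\hookrightarrow LG_\eta(10,\wedge^3W)$ and using Proposition \ref{transversality} (transversality of $\iota(G(3,W))$ to all $\mathbb{D}^A_k\setminus\mathbb{D}^A_{k+1}$), one produces on $D^A_2\setminus D^A_3$ a line bundle $\zeta$ with $\zeta^{\otimes 2}\cong \oo_{D^A_2}(D^A_3)|_{D^A_2\setminus D^A_3}$, or equivalently a rank-one reflexive sheaf $\zeta$ on all of $D^A_2$ with a symmetric multiplication $\zeta\otimes\zeta\to\oo_{D^A_2}$. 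Setting $Y_A:=\mathbf{Spec}_{D^A_2}(\oo_{D^A_2}\oplus \zeta)$ with algebra structure given by this multiplication produces the desired double cover $f:Y_A\to D^A_2$, étale over $D^A_2\setminus D^A_3$ and ramified along $D^A_3$.

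The first step is to pin down $\zeta$ and its symmetric form intrinsically. Over $D^A_2\setminus D^A_3$ the intersection $T_U\cap A$ is exactly $2$-dimensional, so $U\mapsto \wedge^2(T_U\cap A)$ is a line bundle; I would take $\zeta$ to be (a twist of) this line bundle, with the pairing induced by $\eta$ restricted to $\wedge^2(T_U\cap A)\subset \wedge^2 A$, composed with the isomorphism $\wedge^{10}A\cong\mathbb C$ — exactly as O'Grady builds the theta characteristic on an EPW sextic from the wedge of the kernel. The key local input is Lemma \ref{tangent cone} together with Corollary \ref{cor of 2.9}: near a point of $D^A_3\setminus D^A_4$ the locus $D^A_2$ has tangent cone a cone over the Veronese surface in $\PP^5$ centered along the (smooth) threefold $D^A_3$, i.e.\ the singularity is analytically $\frac12(1,1,1)$ transverse to $D^A_3$. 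This is precisely the local model for which the double cover branched over the singular stratum is smooth: étale-locally $D^A_2$ looks like $D^A_3\times C(V_{\text{Ver}})$ and $C(V_{\text{Ver}})=\mathbb C^4/(\pm1)$, whose canonical double cover is the smooth $\mathbb C^4$.

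Next I would verify smoothness of $Y_A$ globally. Away from $D^A_3$ the cover is étale over the smooth (by Corollary \ref{cor of 2.9}) locus $D^A_2\setminus D^A_3$, hence smooth. Along $D^A_3$, using the explicit local form from Lemma \ref{tangent cone} I compute the double cover in the local coordinates $(b_{i,j})$: the $2\times 2$ minors defining $D^A_2$ become, after adjoining $\sqrt{\phantom{x}}$ of the relevant minor/section, the equations of a smooth variety — concretely the $\frac12(1,1,1)$ quotient singularity is resolved by the cover $\mathbb C^4\to\mathbb C^4/\pm1$ transversally to the $3$-dimensional smooth base $D^A_3$. (Here I use $D^A_4=\emptyset$ so that there are no worse singularities to worry about and $D^A_3$ is smooth, which also follows from the corank-$2$ Veronese/corank-$3$ analysis in Lemma \ref{tangent cone} applied with $l=3$.) Irreducibility of $Y_A$ follows because $D^A_2$ is irreducible (it is a degeneracy locus of expected dimension with irreducible resolution $\tilde{\mathbb D}^A_2$, by the lemmas of Section \ref{general lagrangian loci}) and the cover is nontrivial, i.e.\ $\zeta$ is not $2$-torsion in $\Pic(D^A_2\setminus D^A_3)$ — which one sees from the ramification being nonempty, since $D^A_3$ is nonempty of codimension $3$.

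Finally, for the canonical class: for a flat double cover $f:Y_A\to D^A_2$ with $f_*\oo_{Y_A}=\oo\oplus\zeta^{-1}$ and branch divisor $D^A_3$ one has $K_{Y_A}=f^*(K_{D^A_2}+ (D^A_3 \text{ correction}))$ via the adjunction for cyclic covers; I would compute $K_{D^A_2}$ from the two-step conormal sequence of $D^A_2\subset G(3,W)$ together with $[D^A_2]=[(c_2c_1-2c_3)(\mathcal T^\vee)]$ from Section \ref{invariants} and the structure sheaf resolution of a degeneracy locus (Eagon–Northcott / the formula in \cite{PragaczRatajski}), getting that $\oo_{D^A_2}(D^A_3)\cong \zeta^{\otimes 2}$ is exactly the twist making $K_{Y_A}$ trivial. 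Equivalently and more cleanly: pull the computation back to the smooth resolution $\tilde{\mathbb D}^A_2$, where Lemma \ref{canonical class for LG bundle} gives $K_{\tilde{\mathbb D}^A_2}=-(n+1-k)H-(k-1)R$ and Lemma \ref{lem 2.5} gives $[\mathbb E]=[H]-2[R]$; the combination showing $K_{\tilde Y}$ (the induced resolution of $Y_A$) is trivial, hence $K_{Y_A}$ trivial since $Y_A$ is smooth and the resolution is crepant over the $\frac12(1,1,1)$ points. The main obstacle I expect is the last point — matching the twist in the definition of $\zeta$ so that both the algebra structure is well defined (symmetric, unital) \emph{and} the resulting cover has trivial canonical class; this is a bookkeeping problem with the conormal/Eagon–Northcott data, and the transversality statement of Proposition \ref{transversality} is what guarantees that the local model of Lemma \ref{tangent cone} governs the global picture so that no further correction terms appear.
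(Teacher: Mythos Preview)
Your approach differs genuinely from the paper's. The paper does \emph{not} construct the double cover directly on the singular base $D^A_2$ via $\mathbf{Spec}(\oo\oplus\zeta)$. Instead it works upstairs: it proves that the resolution $\tilde D^A_2$ (the blow-up of $D^A_2$ along $D^A_3$) and its exceptional divisor $E$ are smooth, computes $K_{\tilde D^A_2}$ via the normal bundle of $\tilde\iota:\tilde D^A_2\hookrightarrow\tilde{\mathbb D}^A_2$ together with Lemmas \ref{canonical class for LG bundle} and \ref{lem 2.5} to get $E=2K_{\tilde D^A_2}$, takes the smooth double cover $\tilde Y\to\tilde D^A_2$ branched along $E$, and then \emph{contracts} the preimage $\tilde E$ using a multiple of $\tilde f^*\alpha^*h$. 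Smoothness of the resulting $Y$ is checked by slicing with three general hypersurfaces through a point of the image of $\tilde E$: on the threefold slice the relevant component of $\tilde E$ is a $\PP^2$ with normal bundle $\oo(-1)$, and Mori's theorem identifies its contraction as a smooth blow-down. The advantage is that one never has to analyse the analytic local structure of $D^A_2$ beyond the tangent cone, nor define a sheaf of algebras on a singular scheme.

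Your route could in principle be made to work, but several steps are not yet proofs. First, the pairing you describe on $\wedge^2(T_U\cap A)$ (``$\eta$ restricted \dots\ composed with $\wedge^{10}A\cong\mathbb C$'') does not parse as written; you need an actual map $\zeta\otimes\zeta\to\oo_{D^A_2}$ whose vanishing locus is $D^A_3$ with multiplicity one, and this requires specifying the twist and checking it extends reflexively across $D^A_3$. Second, knowing only the tangent cone (Corollary \ref{cor of 2.9}) does \emph{not} give the analytic product decomposition $D^A_2\simeq D^A_3\times(\mathbb C^3/\pm1)$ you invoke for smoothness of $Y_A$; you would need a formal/analytic slice argument here, which is exactly what the paper sidesteps by contracting on the smooth $\tilde Y$. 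Third, irreducibility of $D^A_2$ does not follow from ``irreducible resolution $\tilde{\mathbb D}^A_2$'': that is the resolution of $\mathbb D^A_2$ on the full Lagrangian Grassmannian, not of $D^A_2\subset G(3,W)$, and transversality does not transfer connectedness. The paper proves integrality of $D^A_2$ by a separate Chow-ring computation (Lemma \ref{irreducible D2}), showing $[D^A_2]$ cannot split as a sum of effective classes with zero product.
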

Before we pass to the construction of the double cover let us observe the following.
 \begin{lem}\label{irreducible D2} Under the assumptions of Proposition \ref{existence and smoothness of double cover} the variety $D^A_2$ is integral.
\end{lem}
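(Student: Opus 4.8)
The plan is to use the resolution $\tilde{\mathbb{D}}^A_2 \to \mathbb{D}^A_2$ together with the transversality established in Proposition \ref{transversality}, and show that the strict transform of $D^A_2$ under $\iota$ inside $\tilde{\mathbb{D}}^A_2$ is irreducible and generically reduced, then descend these properties. Recall from Section \ref{general lagrangian loci} that $\tilde{\mathbb{D}}^A_2$ is the Lagrangian bundle $\F = LG(8, \mathcal{Q}^\perp/\mathcal{Q})$ over $G(2,A)$, with $\phi\colon \tilde{\mathbb{D}}^A_2 \to \mathbb{D}^A_2$ birational; in particular $\tilde{\mathbb{D}}^A_2$ is smooth and irreducible, so $\mathbb{D}^A_2$ is irreducible. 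First I would form the fibre product $\tilde{D}^A_2 := \iota(G(3,W)) \times_{\mathbb{D}^A_2} \tilde{\mathbb{D}}^A_2$, equivalently the subvariety of pairs $([U],[V_2]) \in G(3,W)\times G(2,A)$ with $V_2 \subset T_U \cap A$; the projection $\pi' \colon \tilde{D}^A_2 \to G(2,A)$ has fibre over $[V_2]$ equal to $\{[U] \in G(3,W) \mid V_2 \subset T_U\}$.

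The key step is to identify this fibre and show it is irreducible of the expected dimension, so that $\tilde{D}^A_2$ is irreducible by the standard fibration argument. For a generic $[V_2] \in G(2,A)$ the condition $V_2 \subset T_U = \wedge^2 U \wedge W$ cuts out a Schubert-type subvariety of $G(3,W)$; I would show its general fibre is irreducible of dimension $\dim G(3,W) - 3 = 6$ (since $\dim G(2,A) = 16$ and $\dim \tilde{\mathbb{D}}^A_2 = 22$, giving $22 - 16 = 6$), matching the generic fibre dimension of $\pi$, and check that the locus of $[V_2]$ where the fibre jumps has the expected small dimension so it does not create extra components. Then $\tilde{D}^A_2$, being the total space of a fibration with irreducible base and irreducible generic fibre (with degenerations in small codimension), is irreducible; hence $D^A_2 = \phi(\tilde{D}^A_2)$ is irreducible. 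For reducedness: by Proposition \ref{transversality}, $\iota(G(3,W))$ meets $\mathbb{D}^A_2 \setminus \mathbb{D}^A_3$ transversely, and $\mathbb{D}^A_2$ is generically reduced (being a degeneracy locus of expected codimension with the resolution above generically one-to-one), so $D^A_2$ is generically reduced; combined with irreducibility this gives that $D^A_2$ is integral.

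The main obstacle I anticipate is controlling the fibres of $\pi'\colon \tilde{D}^A_2 \to G(2,A)$: one must verify that for the general $[V_2]$ the incidence $\{[U] : V_2 \subset \wedge^2 U \wedge W\}$ is genuinely irreducible of dimension $6$ and that the jumping locus in $G(2,A)$ is a proper closed subset over which the fibre dimension does not grow enough to produce a component of $\tilde{D}^A_2$ of dimension $\geq 6$; this is where the hypotheses $\mathbb{P}(A) \cap G(3,W) = \emptyset$ and $D^A_4 = \emptyset$ enter, since they bound $\dim(T_U \cap A)$ and hence the fibres of $\phi$, ruling out the pathological jumps. An alternative, and perhaps cleaner, route is to observe that $D^A_2$ is connected (as a degeneracy locus of the expected codimension, e.g. by a connectedness theorem of Fulton–Lazarsfeld type applied on the smooth ambient $G(3,W)$) and Cohen–Macaulay (determinantal of expected codimension), hence has no embedded components; connectedness plus genericity of reducedness from Proposition \ref{transversality} then forces $D^A_2$ to be integral. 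I would present whichever of these two arguments is shortest given the tools already set up in the paper.
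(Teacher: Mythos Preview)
Your main approach contains a fatal dimension error. Here $n=10$ and $k=2$, so the fibres of $\pi\colon \tilde{\mathbb{D}}^A_2\to G(2,A)$ are isomorphic to $LG(8,16)$, which has dimension $\tfrac{8\cdot 9}{2}=36$; hence $\dim\tilde{\mathbb{D}}^A_2=16+36=52$, not $22$. On the other hand $\tilde{D}^A_2$ is birational to $D^A_2$, so $\dim\tilde{D}^A_2=6$. The projection $\pi'\colon\tilde{D}^A_2\to G(2,A)$ therefore has image of dimension at most $6$ in a $16$-dimensional target, and its general fibre is \emph{empty}: a point $[V_2]\in G(2,A)$ lies in the image only if $V_2\subset T_U\cap A$ for some $[U]\in D^A_2$, and over a generic point of the image the fibre is the single point $[U]$ with $T_U\cap A=V_2$. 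So irreducibility of $\tilde{D}^A_2$ via $\pi'$ amounts to irreducibility of this image, which is essentially the statement you are trying to prove. The fibration argument is circular.

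Your alternative is closer in spirit but also incomplete. The Fulton--Lazarsfeld connectedness theorems for degeneracy loci do not follow from expected codimension alone; they require an ampleness hypothesis (here, something like ampleness of $S^2\mathcal{T}^{\vee}$), which you would need to verify in the Lagrangian setting. Moreover, ``connected $+$ Cohen--Macaulay $+$ generically reduced'' does not by itself force integrality (two smooth components meeting along a smaller locus satisfy all three); you still need the local irreducibility of $D^A_2$ at points of $D^A_3$, which is available from the tangent-cone description in Corollary~\ref{cor of 2.9} but which you do not invoke. The paper avoids both issues by a direct computation: it writes the class $[D^A_2]=16h^3-12hs_2+12s_3$ in $A^3(G(3,W))$ and checks, by solving a small Diophantine system, that this class admits no decomposition into two nontrivial effective classes with zero product in $A^6(G(3,W))$, which combined with Corollary~\ref{cor of 2.9} yields connectedness and hence integrality.
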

\begin{proof}  We know that $D^A_2$ is of expected dimension. Observe now that by Corollary \ref{cor of 2.9} the variety $D^A_2$ is irreducible if and only if it is connected. To prove connectedness we perform a computation in the Chow ring of the Grassmannian $G(3,W)$ showing that the class $[D^A_2]$ does not decompose into a sum of nontrivial effective classes in the Chow group $A^3(G(3,W))$ whose intersection is the zero class in $A^6(G(3,W))$. More precisely we compute:
$$[D^A_2]=16 h^3-12 h s_2+12 s_3$$
where $h$ is the hyperplane class on $G(3,W)$, $s_2$ and $s_3$ are the Chern classes of the tautological bundle on $G(3,W)$.
We then solve in integer coordinates $a,b,c \in \mathbb{Z}$ the equation
$$(a h^3- b s_2+ c s_3)((16-a) h^3-(12-b) s_2+(12-c) s_3)=0$$
in the Chow group $A^6(G(3,W))$ which is generated by: $s_2^3$, $h^3s_1s_2 $, $s_3^2$.
Multiplying out the equation in the Chow ring and extracting coefficients at the generators we get a system of three quadratic diophantine equations in $a,b,c$:

\begin{equation}
\begin{cases}
- 5a^2  + 4ab - b^2  + 56a - 20b=0\\
- 6a^2  + 8ab - 2b^2  - 4ac + 2bc + 72a - 52b + 20c=0\\ 
6a^2  - 6ab + b^2  + 2ac - c^2  - 72a + 36b - 4c=0\\
 \end{cases}     
 \end{equation}
 The only integer solutions are: $(0,0,0)$ and $(16, 12, 12)$. This ends the proof.  
\end{proof}

 The plan of the construction of the double cover in Proposition \ref{existence and smoothness of double cover}  is the following. We consider the resolution $\tilde{D}_2^A\to D^A_2$  with exceptional divisor $E$.  We prove that $E$ is a smooth even divisor, and hence that  there is a smooth double cover $\tilde{Y}\to \tilde{D}_2^A$ branched over $E$.  Finally, we contract the branch divisor of the double cover using a suitable multiple of the pullback of a hyperplane class on $D^A_2$ by the resolution and the double cover. 
 
 Thus, we start by defining the incidences
\[
 \tilde{D}_2^A=\{([U],[U'])\in G(3,W)\times G(2,A)| \quad T_U \supset U'\},
 \]
 and 
 \[\textstyle 
 \tilde{\mathbb{D}}_2^A=\{([L],[U'])\in LG_\eta(10,\wedge^3W)\times G(2,A)| \quad L \supset U'\}.
 \]
They fit in the following diagram:
\begin{center}
\begin{tikzcd}
 G(3,W)              \rar{\iota}   &  LG_{\omega}(10,\wedge^3 W)\\
 D_2^A   \rar[swap]{\iota|_{D_2^A}} \arrow[Subseteq]{u}{}   & \mathbb{D}^A_2 \arrow[Subseteq]{u}{}\\
\tilde {D}_2^A \uar[swap]{\alpha} \rar[swap]{\tilde{\iota}} & \tilde{\mathbb{D}}^A_2 \uar[swap]{\phi}
\end{tikzcd}
\end{center}

\begin{lem} \label{smooth blow up and exceptional divisor} Under the assumptions of Proposition \ref{existence and smoothness of double cover} the variety $\tilde{D}_2^A$ as well as the exceptional locus $E$ of the map $\alpha$ are smooth. In particular $\alpha$ is a resolution of singularities of $D_2^A $.\end{lem}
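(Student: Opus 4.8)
The plan is to identify $\tilde D_2^A$ with the fibre product of $\iota\colon G(3,W)\to LG_\eta(10,\wedge^3W)$ and the resolution $\phi\colon\tilde{\mathbb D}^A_2\to\mathbb D^A_2$, and then to deduce smoothness from the transversality established in Proposition~\ref{transversality}. Concretely, $\tilde D_2^A=\tilde\iota^{-1}(\tilde{\mathbb D}^A_2)$ sitting over $\iota(G(3,W))\subset LG_\eta(10,\wedge^3W)$, and since $\tilde{\mathbb D}^A_2=\F=LG(8,\mathcal Q^\perp/\mathcal Q)$ is smooth (it is a Lagrangian Grassmannian bundle over $G(2,A)$, by the lemmas of Section~\ref{general lagrangian loci}), it suffices to check that $\iota(G(3,W))$ meets $\mathbb D^A_2$ transversally in a stratified sense, i.e. transversally along each stratum $\mathbb D^A_2\setminus\mathbb D^A_3$ and $\mathbb D^A_3\setminus\mathbb D^A_4$. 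This is exactly the content of Proposition~\ref{transversality}, which gives transversality of $\iota(G(3,W))$ with $\mathbb D^A_k\setminus\mathbb D^A_{k+1}$ for $k=1,2,3$; combined with the hypothesis $D^A_4=\emptyset$ (so that $\iota(G(3,W))$ simply misses $\mathbb D^A_4$), this handles all strata.

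The next step is to pass from transversality of $\iota(G(3,W))$ with the strata of $\mathbb D^A_2$ to smoothness of the fibre product $\tilde D_2^A$. Over the open part $D_2^A\setminus D_3^A$ the resolution $\phi$ is an isomorphism, so $\alpha$ is an isomorphism there and $\tilde D_2^A$ is smooth since $D_2^A\setminus D_3^A$ is smooth by Corollary~\ref{cor of 2.9}. The delicate point is along $D_3^A$. There I would use the fibre structure of $\pi\colon\tilde{\mathbb D}^A_2\to G(2,A)$: a point of $\tilde D_2^A$ over $[U_0]\in D_3^A\setminus D_4^A$ corresponds to a choice of a $2$-plane $U'\subset T_{U_0}\cap A$, and since $\dim(T_{U_0}\cap A)=3$ exactly, the fibre of $\alpha$ over $[U_0]$ is $\PP(T_{U_0}\cap A)\cong\PP^2$. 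One then checks, in the local coordinates $(b_{i,j})$ of Lemma~\ref{quadricsC_v} together with Plücker coordinates on $G(2,A)$, that the defining equations $L\supset U'$ (i.e. that the appropriate entries of the symmetric matrix $M^\Lambda(B_U)-R_k$ vanish) cut out a smooth subvariety: the transversality of Proposition~\ref{transversality} says precisely that the relevant differentials are surjective. The exceptional divisor $E=\alpha^{-1}(D_3^A)=\mathbb E\cap\tilde D_2^A$ (with $\mathbb E$ the exceptional divisor of $\phi$, whose class was computed in Lemma~\ref{lem 2.5}) is then a $\PP^2$-bundle over the smooth threefold $D_3^A$ — using that $D_3^A$ is smooth by Corollary~\ref{cor of 2.9} and $D_4^A=\emptyset$ — hence smooth; and it is a Cartier divisor in $\tilde D_2^A$, so $\tilde D_2^A$ is smooth near $E$ as well, completing the claim that $\alpha$ is a resolution of singularities.

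The main obstacle I expect is making the local analysis along $D_3^A$ genuinely rigorous rather than merely plausible: one must verify that the scheme-theoretic fibre product $\tilde D_2^A=\iota(G(3,W))\times_{\mathbb D^A_2}\tilde{\mathbb D}^A_2$ is reduced and smooth, not just that its support is a manifold. The tangent-cone computation of Lemma~\ref{tangent cone} (with $k=3$, $l=2$: a cone over the Veronese surface in $\PP^5$, whose projective tangent cone is the smooth Veronese, centred along the tangent space to $D_3^A$) is the right tool here: blowing up $D_2^A$ along $D_3^A$ replaces this tangent cone by its projectivized version, and the smoothness of the Veronese surface in $\PP^5$ — together with the fact that the blow-up of a cone over a smooth variety along its vertex is smooth along the exceptional divisor — gives smoothness of $\tilde D_2^A$ near $E$. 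One should also remark that the resolution $\tilde D_2^A$ introduced here agrees with the abstract Pragacz--Ratajski resolution $\tilde{\mathbb D}^A_2$ pulled back along $\iota$, so that $E$ really is the restriction of $\mathbb E$ and its class is $[H]-2[R]$ restricted to $\tilde D_2^A$, a fact that will be used in the following lemmas to exhibit $E$ as an even (i.e. $2$-divisible) divisor.
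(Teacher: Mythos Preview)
Your overall strategy---identify $\tilde D_2^A$ with the pullback of the Pragacz--Ratajski resolution and use transversality---is sound, and the final shape of your argument (``$E$ is smooth Cartier, hence $\tilde D_2^A$ is smooth along $E$'') is exactly the principle the paper uses. But the paper's route to that principle is rather different from yours, and your route as written has a gap.

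The paper does \emph{not} attempt a global smoothness argument. Instead it slices: given $p\in E$ it chooses three general hyperplanes $\mathbf P_1,\mathbf P_2,\mathbf P_3$ through $\alpha(p)$, sets $Z_{\mathbf P}=D_2^A\cap\mathbf P_1\cap\mathbf P_2\cap\mathbf P_3$, and looks at the strict transform $\tilde Z_{\mathbf P}\subset\tilde D_2^A$. Now $Z_{\mathbf P}$ is a threefold with only finitely many isolated singular points (the points of $D_3^A\cap\mathbf P_1\cap\mathbf P_2\cap\mathbf P_3$), and Corollary~\ref{cor of 2.9} shows each such point has tangent cone the affine cone over a Veronese $\PP^2\subset\PP^5$. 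So $\tilde Z_{\mathbf P}\to Z_{\mathbf P}$ is a blow-up at finitely many points, the exceptional divisor $E_{\mathbf P}=E\cap\tilde Z_{\mathbf P}$ is a disjoint union of $\PP^2$'s, and $E_{\mathbf P}$ is Cartier in $\tilde Z_{\mathbf P}$ by the universal property of blow-up. A smooth Cartier divisor forces the ambient threefold to be regular along it; and since $\tilde Z_{\mathbf P}$ is a complete intersection of Cartier divisors in $\tilde D_2^A$ containing the whole fibre $\alpha^{-1}(p)$, smoothness of $\tilde D_2^A$ and of $E$ at $p$ follow.

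The gap in your version is the tangent-cone step. Knowing that the tangent cone of $D_2^A$ at each point of $D_3^A$ is a cone over a smooth Veronese does \emph{not} by itself imply that the blow-up along $D_3^A$ is smooth: the tangent cone is only first-order data and does not control the scheme structure of the blow-up. Your alternative---arguing that $E$ is globally a $\PP^2$-bundle over $D_3^A$, hence smooth---is closer to a complete proof, but you would still need to check that the incidence scheme structure on $E$ really is that of $\PP(\mathcal K)$ for the rank-$3$ kernel bundle $\mathcal K$ on $D_3^A$ (not merely that the set-theoretic fibres are $\PP^2$'s), and that $E$ is Cartier in $\tilde D_2^A$ before you know $\tilde D_2^A$ is, say, Cohen--Macaulay. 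The paper's slicing trick is precisely what makes both points trivial: after cutting down to isolated singularities the exceptional locus is visibly a finite union of reduced $\PP^2$'s, and Cartierness is automatic for the exceptional divisor of a blow-up at points.
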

\begin{proof} Since we know that $D_4^A=\emptyset$,  the resolution $\alpha: \tilde{D}_2^A\to D_2^A$ is just the blow up of $D_2^A $ along $D_3^A$. Now,  $\tilde{D}_2^A\setminus E$ is isomorphic to $D_2^A\setminus D^A_3$, so, by Corollary \ref{cor of 2.9}, we deduce that $\tilde{D}_2^A$ is smooth outside $E$. 
Let $p\in E\subset \tilde{D}_2^A$. Then  $\alpha(p)\in D_3^A$.  Take $\mathbf{P}_1,\mathbf{P}_2,\mathbf{P}_3$ to be three general hyperplanes passing through $\alpha(p)$. Consider $Z_{\mathbf{P}}={D}_2^A\cap \mathbf{P}_1\cap \mathbf{P}_2 \cap \mathbf{P}_3$ and its strict transform  $\tilde{Z}_{\mathbf{P}} \subset  \tilde{D}_2^A$. We have the following diagram:

\begin{center}
\begin{tikzcd}
 \tilde{Z}_{\mathbf{P}}    \dar{\alpha_{\mathbf{P}}}           \rar[swap]  &  \tilde{D}_2^A \dar{\alpha} \\
Z_{\mathbf{P}}    \rar[swap]   & D^A_2\end{tikzcd}
\end{center}
 The map $\alpha_{\mathbf{P}}: \tilde{Z}_{\mathbf{P}}  \to Z_{\mathbf{P}} $ is the blow up of $Z_{\mathbf{P}} $ in $D_3^A\cap \mathbf{P}_1\cap \mathbf{P}_2 \cap \mathbf{P}_3 $, which by Corollary \ref{cor of 2.9} is a finite set of isolated points.  By the assumption on $\mathbf{P}_1,\mathbf{P}_2,\mathbf{P}_3$ the strict transform $\tilde{Z}_{\mathbf{P}}$ contains the whole fiber $\alpha^{-1}(p)$ and hence also $p\in \tilde{Z}_{\mathbf{P}}$.  Let $\tilde {\mathbf{P}}_i$ be the strict transform of $\mathbf{P}_i$ for $i=1,2,3$.  Then $\tilde {\mathbf{P}}_i$ is a Cartier divisor on  $\tilde{D}_2^A$  and $ \tilde{Z}_{\mathbf{P}}=\tilde {\mathbf{P}}_1\cap \tilde {\mathbf{P}}_2\cap\tilde {\mathbf{P}}_3$ is a complete intersection of Cartier divisors on $\tilde{D}_2^A$.
 Now, from Corollary \ref{cor of 2.9},  the exceptional divisor $E_\mathbf{P}=E \cap \tilde{Z}_{\mathbf{P}}$ of $\alpha_{\mathbf{P}}$ is isomorphic to a finite union of disjoint $(\mathbb{P}^2)'s$, one for each point in $D_3^A\cap \mathbf{P}_1\cap \mathbf{P}_2 \cap \mathbf{P}_3 $. But  $E_\mathbf{P}$ is itself a Cartier divisor on $\tilde{Z}_{\mathbf{P}}$ by general properties of blow up.  Therefore $\tilde{Z}_{\mathbf{P}}$ is smooth. We conclude that $\tilde{D}_2^A$ is smooth at $p$ and similarly, that $E$ is smooth at $p$.
 % Proposition \ref{transversality} and Lemma \ref{tangent cone} we infer that $D_3^A$ is smooth and the normal cone of $D_3^A$ in $D_2^A$ is relatively a cone over $\PP^2$.  The exceptional locus $E$ is fibred with $\mathbb{P}%^2$`s over $D_3^A$. Now, $E$ is a smooth Cartier divisor (by general properties of the blowing-up) hence $\tilde{D}_2^A$ is smooth in points of $E$. Furthermore $\tilde{D}_2^A\setminus E \simeq D_2^A\setminus D_3^A$ is also %smooth. We deduce that $\tilde{D}_2^A$ is smooth.
%The last assertion is now also a consequence of Lemma \ref{tangent cone} since $D_2^A$ has transversally $\frac{1}{2}(1,1,1)$ singularities.
\end{proof}
We compute the first Chern class of the normal bundle of the embedding  $\tilde {\iota}: \tilde{D}_2^A\to \tilde {\mathbb{D}}_2^A$.
  \begin{lem}
  % We have the following: 
   $$c_1(\tilde{\iota}^*N_{\tilde{\iota}(\tilde{D}_2^A)|\tilde{\mathbb{D}}_2^A})=c_1(\alpha^*\iota^*N_{\iota(G(3,W))|LG_\eta(10,\wedge^3 W)})=38h,$$
   where $h$ is the pullback via the resolution $\alpha$ of the restriction of the hyperplane class on $G(3,W)$ to $D^A_2$.
  \end{lem}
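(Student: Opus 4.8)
The plan is to obtain the first equality from base change for normal bundles and then to compute the right-hand side by an elementary Chern class calculation on $G(3,W)$. First I would observe that $\tilde{D}_2^A$ is the fibre product of $\iota\colon G(3,W)\to LG_\eta(10,\wedge^3W)$ with the map $\psi\colon\tilde{\mathbb{D}}_2^A\to LG_\eta(10,\wedge^3W)$, $\psi([L],[U'])=[L]$ (that is, $\phi$ followed by the inclusion $\mathbb{D}_2^A\subset LG_\eta(10,\wedge^3W)$): a point of $G(3,W)\times_{LG_\eta(10,\wedge^3W)}\tilde{\mathbb{D}}_2^A$ is a triple $([U],[L],[U'])$ with $T_U=L$ and $U'\subset L$, which is exactly a point of $\tilde{D}_2^A$, and since $\iota^*\mathcal{L}=\mathcal{T}$ the incidence equations cutting out $\tilde{\mathbb{D}}_2^A$ pull back to those cutting out $\tilde{D}_2^A$, so the two scheme structures coincide. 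Because $\iota$ is a closed embedding by Proposition \ref{transversality}, this identifies $\tilde\iota(\tilde{D}_2^A)$ scheme-theoretically with $\psi^{-1}(\iota(G(3,W)))$, which is cut out in $\tilde{\mathbb{D}}_2^A$ by the pullback of the ideal of the smooth subvariety $\iota(G(3,W))\subset LG_\eta(10,\wedge^3W)$ of codimension $46=\dim LG_\eta(10,\wedge^3W)-\dim G(3,W)=55-9$. Since $\tilde{\mathbb{D}}_2^A$ is smooth (it is the Lagrangian bundle $LG(8,\mathcal{Q}^\perp/\mathcal{Q})$ over $G(2,A)$) and $\tilde{D}_2^A$ is smooth and integral of the expected codimension $46$ in it by Lemmas \ref{irreducible D2} and \ref{smooth blow up and exceptional divisor}, the natural surjection of conormal sheaves $\psi^*\bigl(\mathcal{I}_{\iota(G(3,W))}/\mathcal{I}^2_{\iota(G(3,W))}\bigr)\twoheadrightarrow \mathcal{I}_{\tilde{D}_2^A}/\mathcal{I}^2_{\tilde{D}_2^A}$ is a surjection of vector bundles of the same rank $46$, hence an isomorphism. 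Dualizing and using $\psi\circ\tilde\iota=\iota|_{D_2^A}\circ\alpha$ gives $\tilde\iota^*N_{\tilde\iota(\tilde{D}_2^A)|\tilde{\mathbb{D}}_2^A}\cong\alpha^*\iota^*N_{\iota(G(3,W))|LG_\eta(10,\wedge^3W)}$, which is the first equality.

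For the second equality it suffices to compute $c_1(\iota^*N_{\iota(G(3,W))|LG_\eta(10,\wedge^3W)})$ on $G(3,W)$ and then pull it back by $\alpha$. The normal bundle sequence $0\to T_{G(3,W)}\to\iota^*T_{LG_\eta(10,\wedge^3W)}\to\iota^*N_{\iota(G(3,W))|LG_\eta(10,\wedge^3W)}\to0$ gives $c_1(\iota^*N)=\iota^*c_1(T_{LG_\eta(10,\wedge^3W)})-c_1(T_{G(3,W)})$. The Lagrangian Grassmannian $LG_\eta(10,\wedge^3W)$ of the $20$-dimensional symplectic space $\wedge^3W$ is Fano of index $11$ with ample generator the Pl\"ucker class $c_1(\mathcal{L}^\vee)$, so $c_1(T_{LG_\eta(10,\wedge^3W)})=11\,c_1(\mathcal{L}^\vee)$; since $\iota^*\mathcal{L}=\mathcal{T}$, and the exact sequence $0\to\Omega_{G(3,W)}(1)\to\mathcal{T}^\vee\to\mathcal{O}_{G(3,W)}(1)\to0$ of Subsection \ref{invariants} gives $c_1(\mathcal{T}^\vee)=\bigl(c_1(\Omega_{G(3,W)})+9h\bigr)+h=(-6+9+1)h=4h$ (equivalently, $\deg D_1^A=168=4\deg G(3,W)$), we obtain $\iota^*c_1(T_{LG_\eta(10,\wedge^3W)})=44h$. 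Finally $c_1(T_{G(3,W)})=6h$ since $G(3,6)$ has index $6$, whence $c_1(\iota^*N)=44h-6h=38h$; pulling back by $\alpha$ finishes the proof.

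I expect the first step to be the main obstacle: one must verify that $\tilde{D}_2^A$ really carries the scheme structure of the fibre product $G(3,W)\times_{LG_\eta(10,\wedge^3W)}\tilde{\mathbb{D}}_2^A$, so that the conormal surjection is between locally free sheaves of equal rank and is hence an isomorphism of bundles, not merely an equality of first Chern classes. This is precisely where the smoothness of $\tilde{D}_2^A$ from Lemma \ref{smooth blow up and exceptional divisor}, together with the dimension count forcing $\tilde{D}_2^A$ to be a local complete intersection of the expected codimension $46$ in the smooth variety $\tilde{\mathbb{D}}_2^A$, is used; the remaining Chern class bookkeeping is entirely routine.
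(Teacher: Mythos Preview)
Your proof is correct and follows essentially the same route as the paper. For the first equality the paper simply invokes the transversality of Proposition \ref{transversality} to conclude $\tilde{\iota}^*N_{\tilde{\iota}(\tilde{D}_2^A)|\tilde{\mathbb{D}}_2^A}\cong\alpha^*\iota^*N_{\iota(G(3,W))|LG_\eta(10,\wedge^3 W)}$; your fibre-product/conormal argument is precisely an explicit unpacking of what that transversality statement means at the level of normal bundles on the resolution. For the second equality both you and the paper use the normal bundle sequence of $\iota$ and the relation $c_1(\mathcal{T}^\vee)=4h$; the only cosmetic difference is that the paper writes $\iota^*T_{LG_\eta(10,\wedge^3W)}=S^2\mathcal{T}^\vee$ and reads off $c_1=-11c_1(\mathcal{T})$, whereas you phrase the same fact as ``$LG_\eta(10,20)$ has Fano index $11$''.
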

  \begin{proof} From the transversality (Proposition \ref{transversality}) we have 
  $$\tilde{\iota}^*N_{\tilde{\iota}(\tilde{D}_2^A)|\tilde{\mathbb{D}}_2^A)}=\alpha^*\iota^*N_{\iota(G(3,W))|LG_\eta(10,\wedge^3 W)}.$$
  which gives the first equality.
  
  To get the second, consider the exact sequence:
  $$0\to T_{G(3,W)}\to \iota^*(T_{LG_\eta(10,\wedge^3 W)})\to \iota^*(N_{\iota(G(3,W))|LG_\eta(10,\wedge^3 W)})\to 0,$$
  and observe that 
  $\iota^*(T_{LG_\eta(10,\wedge^3 W)})=\iota^* (S^2 \mathcal{L}^{\vee})=S^2(\iota^* \mathcal{L}^{\vee})=S^2 \mathcal{T}^{\vee}$, where $\mathcal{L}$ denotes, as before, the tautological bundle on the Lagrangian Grassmannian $LG_\eta(10,\wedge^3 W)$.
  We obtain $$c_1(\alpha^*\iota^*N_{\iota(G(3,W))|LG_\eta(10,\wedge^3 W)})=-11 \alpha^* c_1(\mathcal{T})-6h.$$
 Now, from 
$$ 0\to\mathcal{O}_{G(3,W)}(-1)\to \mathcal{T}\to T_{G(3,W)}(-1)\to 0$$
we obtain $\alpha^* c_1(\mathcal{T})=-4h$, which proves the lemma. 
  \end{proof}
Note that in our notation we have $\tilde{\iota}^* H=\tilde{\iota}^*\phi^*c_1(\mathcal{L}^{\vee})=\alpha^* \iota^* c_1(\mathcal{L}^{\vee})=\alpha^* c_1(\mathcal{T}^{\vee})=4h$.
We aim now at constructing a double covering of $\tilde{D}_2^A$ branched along $E$. It is enough to prove that $E$ is an even divisor.
This follows from the exact sequence:
$$0\to T_{\tilde{D}_2^A}\to\tilde{\iota}^* T_{\tilde{\mathbb{D}}_2^A}  \to \tilde{\iota}^*N_{\tilde{\iota}(\tilde{D}_2^A)|\tilde{\mathbb{D}}_2^A}\to 0,$$
and Lemma \ref{canonical class for LG bundle}. Indeed, from them we infer
$$c_1(T_{\tilde{D}_2^A})=\tilde{\iota}^*(9H+R)-38h=\tilde{\iota}^*(R)-2h,$$
which, by Lemma \ref{lem 2.5}, means $E=\mathbb{E} \cap \tilde{D}_2^A=\tilde{\iota}^*(H-2R)=2K_{\tilde{D}_2^A}$. 
By  Lemma \ref{smooth blow up and exceptional divisor} there hence exists a smooth double cover  $\tilde{f}: \tilde{Y}\to \tilde{D}_2^A$ branched along the exceptional locus $E$ 
of the resolution $\alpha$. Moreover, from the adjunction formula for double covers we get $K_{\tilde{Y}}=\tilde{f}^{-1} (E)=: \tilde{E}$. 

%Since $\tilde{E}'$ is isomorphic to $E'$ we infer that it is also a $\PP^2$ fibration from Lemma \ref{smooth blow up and exceptional divisor}. The normal bundle is easy found from the local description of a double cover.
We now need to contract $\tilde{E}=\tilde{f}^{-1}(E)$ on $\tilde{Y}$.
For that, with slight abuse of notation, we denote by $h$  the class of the hyperplane section on $D_2^A\subset G(3,W)$. Then $|\tilde{f}^*\alpha^* h|$ is a globally generated linear system whose associated morphism defines $\alpha \circ \tilde{f}$ and hence contracts $E$ to a threefold and is 2:1 on $\tilde{Y}\setminus \tilde{f}^{-1}(E)$. It follows by standard arguments (for example applying Stein factorization and \cite[Proposition 4.4]{HartshorneLN})  that there exists a number $n$ such that  the system $|n \tilde{f}^*\alpha^* h|$ defines a morphism $\tilde{\alpha}: \tilde{Y} \to Y $ which is a birational morphism contracting exactly $\tilde{E}$ to a threefold $\mathcal{Z}$ and such that its image $Y$ is normal. We then have the following diagram

\begin{center}
\begin{tikzcd}
 \tilde{Y}    \dar{\tilde{\alpha}}           \rar{\tilde{f}}  &  \tilde{D}_2^A \dar{\alpha} \\
Y    \rar{f}   & D^A_2
\end{tikzcd}
\end{center}
in which $Y$ admits a 2:1 map $f:Y\to D^A_2$ branched along $D^A_3$. 
\begin{proof}[Proof of Proposition \ref{existence and smoothness of double cover}]We have constructed $Y$, a normal variety admitting a 2:1 map $f:Y\to D^A_2$ branched along $D^A_3$. Clearly $K_{\tilde{Y}}=\tilde{E}$ implies $K_Y=0$. It hence remains to prove that $Y$ is smooth. Since $\tilde{\alpha}$ is a contraction that contracts only $\tilde{E}$ it is clear that  $Y$ is smooth outside of $\mathcal{Z}=\tilde{\alpha}( \tilde{E})$. Let now $p\in \mathcal{Z}$ and  let $p'=f(p)$. We then choose three general hypersurfaces $\mathbf{P}_1,\mathbf{P}_2, \mathbf{P}_3$ of degree $n$ in $\PP(\wedge^3 W)$ passing through $p'$. Consider $Z_{\mathbf{P}}=D_2^A \cap \mathbf{P}_1\cap \mathbf{P}_2\cap  \mathbf{P}_3$ and $Z'_{\mathbf{P}}=D_3^A \cap \mathbf{P}_1\cap \mathbf{P}_2\cap  \mathbf{P}_3$.  Then  $Z'_{\mathbf{P}}$ is a finite set of points that includes $p'$.  Consider the following natural restriction of the above diagram:

\begin{center}
\begin{tikzcd}
 \tilde{Y}_{\mathbf{P}}   \dar{\tilde{\alpha}_{\mathbf{P}}}           \rar{\tilde{f}_{\mathbf{P}}}  &  \tilde{Z}_{\mathbf{P}} \dar{\alpha_{\mathbf{P}}} \\
Y_{\mathbf{P}}    \rar{{f}_{\mathbf{P}}}   & Z_{\mathbf{P}}
\end{tikzcd}
\end{center}
Here  $\alpha_{\mathbf{P}}=\alpha|_{\alpha^{-1} (Z_\mathbf{P})}: \tilde{Z}_{\mathbf{P}} \to {Z}_{\mathbf{P}} $ is just the blow up of $Z_\mathbf{P}$ along $Z'_{\mathbf{P}}$.  The exceptional divisor $E_{\mathbf{P}}$ is then, by Corollary \ref{cor of 2.9}, isomorphic to a finite set of disjoint $({\mathbb{P}^2})'s$ that each have normal bundle $\mathcal{O}_{\mathbb{P}^2}(-2)$ in $\tilde{Z}_{\mathbf{P}}$.
 Taking the double cover of $\tilde{Z}_{\mathbf{P}}$ branched 
 along the exceptional divisor $E_{\mathbf{P}}$, the preimage of these  $({\mathbb{P}^2})'s$ are the components of $\tilde{E}_{{\mathbf{P}}} \subset \tilde{Y}_{\mathbf{P}}$, each component a ${\mathbb{P}^2}$   with normal bundle  
 $\mathcal{O}_{\mathbb{P}^2}(-1)$. The contraction $\tilde{\alpha}_{\mathbf{P}}$ contracts the divisor
 $\tilde{E}_{{\mathbf{P}}}$ to a finite set of points in $Y_{\mathbf{P}}$.   It contracts one of its $({\mathbb{P}^2})'s$, denote it by $\tilde{E}_{{\mathbf{P}}}^{p}$, to the point $p$. 
 Note also that from the construction, $Y_{\mathbf{P}}$ is the intersection of three Cartier divisors on
 $Y$ which is smooth outside the finite set of points $Z'_{\mathbf{P}}$. Thus, since we constructed $Y$ 
 to be normal, we deduce that $Y_\mathbf{P}$ is also normal. We claim that $p$ must be a smooth point of $Y_{\mathbf{P}}$. Indeed, we know that $\tilde{\alpha}_{\mathbf{P}}$ 
 is a birational morphism onto the normal variety $Y_\mathbf{P}$. Moreover, all lines $\mathbf{l}\subset \tilde{E}_{{\mathbf{P}}}^p=\mathbb{P}^2$ are numerically equivalent on $\tilde{Y}_{\mathbf{P}}$
 and satisfy $\mathbf{l}\cdot K_{\tilde{Y}_{\mathbf{P}}}=-1<0$. It follows from \cite[Corollary 3.6]{Mori}, that there exists an extremal ray $r$ for $\tilde{Y}_{\mathbf{P}}$ 
 whose associated contraction $\operatorname{cont}_r:\tilde{Y}_{\mathbf{P}}\to \hat{Y}_{\mathbf{P}}$ contracts  $\tilde{E}_{{\mathbf{P}}}^{p}$ to a point $\hat{p}$ and that
 $\tilde{\alpha}_{\mathbf{P}}$ factorizes through $\operatorname{cont}_r$. By \cite[Theorem 3.3]{Mori} we have that $\operatorname{cont}_r$ is the blow down of $\tilde{E}_{{\mathbf{P}}}^p$
 and $\hat{p}$ is a smooth point of $\hat{Y}_{\mathbf{P}}$. Let us now denote by $\sigma: \hat{Y}_{\mathbf{P}}\to Y_{\mathbf{P}}$ the morphism satisfying $\tilde{\alpha}_{\mathbf{P}}=\sigma\circ
 \operatorname{cont}_r$. Consider $\sigma_o$ the restriction of $\sigma$ to small open neighborhoods of $\hat{p}$ and $p$.  Then $\sigma_o$ is a birational proper morphism which is bijective to an open subset of the 
  normal variety $Y_{\mathbf{P}}$. It follows by Zariski Main Theorem that $\sigma_o$ is an isomorphism and in consequence, $p$ is a smooth point 
 on $Y_{\mathbf{P}}$.
 
 The latter implies that $Y$ must also be smooth 
 at $p$ as it admits a smooth complete intersection
 subvariety which is smooth at $p$.
 \end{proof}
 
\begin{cor}\label{co} Let $[A]\in LG_\eta(10,\wedge^3W)$ be a general Lagrangian subspace with a $3$-dimensional intersection with some  space $F_{[w]}=\{w\wedge \alpha|\ \alpha \in \wedge ^2 W\}$, then 
there exists a double cover $f_A: Y_A\to D_2^A$ branched over $D_3^A$, where $Y_A$ is a smooth irreducible sixfold with trivial canonical class.
\end{cor}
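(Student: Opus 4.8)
The statement is a specialization of Proposition~\ref{existence and smoothness of double cover} to the family
\[
\Delta=\{[A]\in LG_\eta(10,\wedge^3W)\ |\ \exists\, w\in W\colon \dim A\cap F_{[w]}\geq 3\},
\]
so the plan is simply to check that a general $[A]\in\Delta$ satisfies the two hypotheses of that proposition, namely $\PP(A)\cap G(3,W)=\emptyset$ and $D_4^A=\emptyset$. By definition, $\PP(A)\cap G(3,W)=\emptyset$ is precisely the condition $[A]\notin\Sigma$, and $D_4^A=\emptyset$ is precisely the condition $[A]\notin\Gamma$. Hence it is enough to show that a general point of $\Delta$ lies outside $\Sigma\cup\Gamma$.

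First I would record that $\Delta$ is irreducible. Each $F_{[w]}$ is itself a Lagrangian subspace (since $w\wedge\alpha\wedge w\wedge\beta=0$ for all $\alpha,\beta\in\wedge^2W$), so $\Delta$ is the image of the incidence variety $\mathcal I=\{([A],[w])\in LG_\eta(10,\wedge^3W)\times\PP(W)\ |\ \dim A\cap F_{[w]}\geq 3\}$, whose projection to $\PP(W)$ has as fibre over $[w]$ the Lagrangian degeneracy locus $\mathbb D^{F_{[w]}}_3$. Each such fibre is irreducible, since by Pragacz--Ratajski its resolution $\tilde{\mathbb D}^{F_{[w]}}_3$ is a Lagrangian-Grassmannian bundle $LG(7,14)$ over $G(3,F_{[w]})$; thus $\mathcal I$, and hence $\Delta$, is irreducible. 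Now $\Sigma$ and $\Gamma$ are closed (being Lagrangian degeneracy-type loci; in fact divisors, see \cite{EPWMichigan} and Lemma~\ref{Gamma  div}), and $\Delta\setminus(\Sigma\cup\Gamma)\neq\emptyset$ by Lemma~\ref{distinct delta gamma}, so $\Delta\cap(\Sigma\cup\Gamma)$ is a proper closed subset of the irreducible variety $\Delta$. Consequently the general $[A]\in\Delta$ avoids $\Sigma\cup\Gamma$.

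For such an $[A]$, Proposition~\ref{existence and smoothness of double cover} applies verbatim and yields a double cover $f_A\colon Y_A\to D_2^A$ branched over $D_3^A$ with $Y_A$ smooth, irreducible and of trivial canonical class. To see that $Y_A$ is a sixfold, note that since $\PP(A)\cap G(3,W)=\emptyset$, Corollary~\ref{cor of 2.9} gives that $D_2^A$ has the expected codimension $3$ in the nine-dimensional $G(3,W)$, hence $\dim D_2^A=6$, and $f_A$ is a finite morphism of degree $2$, so $\dim Y_A=6$ as well.

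I expect the only non-formal ingredient here to be the input $\Delta\setminus(\Gamma\cup\Sigma)\neq\emptyset$ (Lemma~\ref{distinct delta gamma}): one must produce an explicit Lagrangian $A$ that meets some $F_{[w]}$ in a $3$-dimensional subspace while keeping $\PP(A)$ disjoint from $G(3,W)$ and $D_4^A$ empty. Granting that, the present corollary is a straightforward genericity-and-dimension argument built on top of the already-established Proposition~\ref{existence and smoothness of double cover}.
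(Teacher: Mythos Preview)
Your proposal is correct and follows essentially the same strategy as the paper: reduce to Proposition~\ref{existence and smoothness of double cover} by showing that a general $[A]\in\Delta$ lies outside $\Sigma\cup\Gamma$, using the irreducibility of $\Delta$ together with Lemma~\ref{distinct delta gamma} (and $\Delta\not\subset\Sigma$ from \cite{EPWMichigan}). The only cosmetic differences are that the paper embeds the proofs of Lemma~\ref{Gamma  div} and Lemma~\ref{distinct delta gamma} inside the proof of the corollary, whereas you invoke them as black boxes, and you supply your own irreducibility argument for $\Delta$ where the paper simply cites \cite{EPWMichigan}; also note that Lemma~\ref{distinct delta gamma} is proved by a dimension count rather than by exhibiting an explicit $A$.
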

\begin{proof} It is enough to make a dimension count to prove that the general Lagrangian space $A$ satisfying the assumptions of the Corollary also satisfies the assumptions of Proposition \ref{existence and smoothness of double cover}. 
Indeed, let as in the introduction
\[\textstyle
\Delta =\{[A]\in LG_\eta(10,\wedge^3W)| \exists w\in W: \dim (A\cap F_{[w]})\geq 3\},
\]
 and 
 \[\textstyle
 \Gamma =\{[A]\in LG_\eta(10,\wedge^3W)| \exists U \in G(3, W): \dim (A\cap T_U)\geq 4\}.
 \] 
%O'Grady showed in \cite{EPWMichigan}[Proposition 2.2] that $\Delta$ is a divisor distinct from the divisor $\Sigma$ defined above.
We show:
\begin{lemm} \label{Gamma  div} The set $\Gamma\subset LG_\eta(10,\wedge^3W)$ is a divisor.
%Moreover, the dimension of $\Delta$ is also $54$.
\end{lemm}
\begin{proof} Let us consider the incidence
$$\textstyle \Xi=\{ ([U],[A])\in G(3,W)\times LG_\eta(10,\wedge^3W): \dim(T_U\cap A)\geq 4 \}.$$
The dimension of $\Xi$ can be computed by looking at the projection $\Xi \to G(3,6)$.
For a fixed tangent plane we choose first a $\PP^3$ inside: this choice has $24$ parameters.
Then for a fixed $\PP^3$ we have $\dim(LG(6,12))=21$ parameters for the choice of $A$.
Thus the dimension of $\Xi$ is $9+24+21=54$. It remains to observe that the projection $\Xi \to LG_\eta(10,\wedge^3W)$ is finite, and that  $\dim (LG_\eta(10,\wedge^3W))=55$.
%The divisor $\Delta$ was studied in \cite{EPWMichigan}.
\end{proof}

Note that in \cite[Proposition 2.2]{EPWMichigan} it is proven that $\Delta$ is irreducible and not contained in $\Sigma=\{[A]\in LG(10,20) |  \PP(A)\cap G(3,W)\neq \emptyset\}$.
Our corollary is now a consequence of  Proposition \ref{existence and smoothness of double cover} and the following lemma. 
\begin{lem} \label{distinct delta gamma} The divisors $\Delta$, $\Gamma \subset LG_\eta(10,\wedge^3 W)$ have no common components. \end{lem}
\begin{proof}
We need to prove $\dim (\Delta\cap\Gamma)< 54$ which, by the fact that $\Delta$ is irreducible and not contained in $\Sigma$, is equivalent to $ \dim ((\Delta\cap \Gamma)\setminus \Sigma)< 54$. For this, observe that 
if $[A]\in(\Delta\cap\Gamma)\setminus \Sigma$ then there exist $[U]\in G(3,W)$ and $[w]\in \mathbb{P}(W)$ with $\dim (A\cap T_U)=4$ and $\dim (A\cap F_{[w]})=3$. 
We can hence consider the incidence:
%\begin{equation}
%\begin{split}
%\textstyle \Theta=\{([A],[W_3],[W_4],[w],[U])|\quad W_3 \subset A\cap F_x, W_4\subset A \cap T_U\}\\
%\hskip 3pt \subset LG_\eta(10, \wedge^3 W) \times G(3, \wedge^3 W)\times G(4, \wedge^3 W)\times \mathbb{P}(W)\times G(3,W)
%\end{split}
%\end{equation}
\begin{align}
 \Theta=&\{([A],[W_3],[W_4],[w],[U])\;|\; W_3 = A\cap F_{[w]}, W_4= A \cap T_U\}\\
 &\subset LG_\eta(10, \wedge^3 W) \times G(3, \wedge^3 W)\times G(4, \wedge^3 W)\times \mathbb{P}(W)\times G(3,W)\nonumber
\end{align}
such that its projection  to $LG_\eta(10, \wedge^3 W)$ contains  $(\Delta\cap\Gamma)\setminus \Sigma$. Note also that if we take $([A],[W_3],[W_4],[w],[U])\in \Theta$ then 
$W_4\cap W_3=W_4\cap F_{[w]}=W_3 \cap T_{U}$.

We shall now compute the dimension of $\Theta$ by considering fibers under subsequent projections:
\begin{multline*}\textstyle
 LG_\eta(10, \wedge^3 W) \times G(3, \wedge^3 W)\times G(4, \wedge^3 W)\times \mathbb{P}(W)\times G(3,W)\\ \textstyle  \xrightarrow{\pi_1}  G(3, \wedge^3 W)\times G(4, \wedge^3 W)\times \mathbb{P}(W)\times G(3,W) \xrightarrow{\pi_2} G(4, \wedge^3 W)\times \mathbb{P}(W)\times G(3,W)  \\ \xrightarrow{\pi_3} \mathbb{P}(W)\times G(3,W) \end{multline*}
We have two possibilities for pairs $([w],[U])$ which give us two types of points to consider:
\begin{enumerate}
\item $w \not \in U$, then $\dim T_U\cap F_{[w]}= 3$. 
\item $w \in U$, then $\dim T_U\cap F_{[w]}= 7$.
\end{enumerate}
We then have different types of elements in the intersection  $\pi_3^{-1}([w],[U])\cap \pi_2(\pi_1(\Theta))$, depending on 
 the number $d_1:=\dim (W_4 \cap F_{[w]})= \dim (W_4 \cap W_3)\leq 3$.  If $W_4^{\perp}$ denotes the orthogonal to $W_4$ w.r.t. $\eta$ in $\wedge^3W$, then $\dim W_4^{\perp}\cap F_{[w]}=6+d_1$. Now, in order for $[W_3]$ to be an element of $\pi_2^{-1}([W_4],[w],[U]) \cap \pi_1(\Theta)$
 we must have $W_3\subset W_4^{\perp}\cap F_{[w]}$.   
  %If we denote $d_2=\dim W_4 \cap W_3$ this gives different types of elements in $\pi_2^{-1}([W_4],[w],[U]) \cap \pi_2\circ \pi_1(\Theta)$. 
The fiber $\pi_1^{-1}([W_3],[W_4],[w],[U])\cap \Theta$ is of dimension $\frac{(3+d_1)(4+d_1)}{2}$. Hence to compute the dimension of each component of $\Theta$ it is enough to compute the dimensions of the spaces $F_{i,d_1}$ of elements $([W_3],[W_4],[w],[U])$ of types $(i,d_1)$, where $i=1$ if $w \not \in U$ and  $i=2$ if $w \in U$.
\begin{enumerate}
\item For $i=1$ we start with a choice of $[U]\in G(3,W)$. Then $[w]$ belongs to an open subset of $\mathbb{P}^5$. We have $d_1\leq 3$ and $[W_4]$ belongs to the Schubert cycle consisting of 4-spaces in the $10$-dimensional space $T_U$ that meet the fixed 3-space $T_U\cap F_{[w]}$ in dimension $d_1$. And $[W_3]$ belongs to the Schubert cycle of 3-spaces in the $(6+d_1)$-dimensional space $W_4^{\perp}\cap F_{[w]}$ that contains the space $W_4 \cap F_{[w]}$ of dimension $d_1$ %in dimension $d_2$.
\item For $i=2$ we again start with a choice of $[U]\in G(3,W)$. In this case $[w]$ belongs to $\PP(U)$. We have $d_1\leq 3$ and $[W_4]$ belongs to the Schubert cycle of 4-spaces in the $10$-dimensional space $T_U$ that meet the fixed 7-space $T_U\cap F_{[w]}$ in dimension $d_1$. Then $[W_3]$ belongs to the Schubert cycle of 3-spaces in the $(6+d_1)$-dimensional space $W_4^{\perp}\cap F_{[w]}$ that contains the space $W_4 \cap F_{[w]}$ of dimension $d_1$.% in dimension $d_2$.
\end{enumerate}
We have:
\begin{equation*}
\dim F_{i,d_1}=\begin{cases} 9+5+d_1(3-d_1)+(4-d_1)6+ (d_1+3)(3-d_1) \\ \hskip 5cm =47-3d_1-2d_1^2 \hskip 0.5cm &\text{ for i=1,} \\
9+2+d_1 (7-d_1)+ (4-d_1)6 + (d_1+3)(3-d_1) \\
\hskip 5cm  = 44+d_1-2d_1^2 &\text{ for i=2.}
\end{cases}
\end{equation*}
In each case we have $\dim F_{i,d_1}+\frac{(3+d_1)(4+d_1)}{2}\leq 53$. % except when $i=2, d_1=2, d_2<2$ ???, in which case we get $54$.
It follows that $\dim \Theta \leq 53$ which implies $\dim (\Delta\cap \Gamma)\leq 53$. 
Hence $\Delta$ and $\Gamma$ have no common components. 
\end{proof}
This concludes the proof also of Corollary \ref{co}. \end{proof}
 \end{section}

 \section{Special $A$}\label{special-special}

Let us recall from \cite{EPWMichigan} the following construction.
Let $V$ and $V_0$ be two vector spaces of dimensions 5 and 1 respectively. Let $W=V\oplus V_0$.
Consider the space $\wedge^3 W$ equipped with the symplectic form $\eta$ given by the wedge product as above. 
%As above 
%\begin{itemize}
%\item  for each $w\in W$ we denote $F_{[w]}:=\{\alpha \in \wedge^3 W | \alpha\wedge w=0\}=\langle w\rangle\wedge(\wedge^2 W) $
%\item for each $[U]\in G(3,W)$ we denote $T_{U}:= \wedge^2 U \wedge W\subset \wedge^3 W$ the affine cone of the projective tangent space to $G(3,W) $ in $[U]$
%\end{itemize}
Let $v_0\in V_0$, choose $A$ a general Lagrangian subspace of $\wedge^3 W$ such that $A\cap F_{[v_0]}$ is a vector space of dimension 3 i.e. $[A]$ is a general element of the divisor $\Delta\subset LG_\eta(10,\wedge^3W)$. In particular, we assume $[A]\in \Delta\setminus \Sigma$. Note that, by \cite[Proposition 2.2 (2)]{EPWMichigan}, for a general $[A]\in \Delta$ there is a unique $[v_0]$ such that $F_{[v_0]}\cap A$ is of dimension 3.

Let $\tilde{K}=A\cap F_{[v_0]}$ and denote by $K\subset \wedge^2 V$ the $3$-dimensional subspace such that $\tilde{K}=v_0\wedge K$.
Observe that there is a natural isomorphism $\wedge^2 V \to F_{[v_0]}$ given by wedge product with $v_0$. The latter induces  an isomorphism $\wedge^3 V \to F_{[v_0]}^{\vee}$.

Let $[B]\in LG_\eta(10,\wedge^3W)$ be a Lagrangian space such that $B\cap F_{[v_0]}=\{0\}$ and $B\cap A=\{0\}$.
Then the symplectic form $\eta$ defines a canonical isomorphism $B\to F_{[v_0]}^{\vee}$ by which $A$ appears as the graph of a symmetric map
$\tilde{Q}_A: F_{[v_0]} \to B=F_{[v_0]}^{\vee}$. Composed with the isomorphisms $\wedge^2 V \to F_{[v_0]}$ and $\wedge^3 V \to F_{[v_0]}^{\vee}$
we get a symmetric map 
$$Q_A: \textstyle \wedge^2 V \to \wedge^3 V\cong (\wedge^2 V)^\vee.$$

Clearly $\ker Q_A=K$. Let $q_A$ be the quadric on $\wedge^2 V$ given by $Q_A$, then $q_A$ is a quadric of rank 7; it is a cone over $K$.
The map $Q_A$ defines an isomorphism $\wedge^2 V/K \to K^{\perp}$ and hence the quadric $q_A$ defines a quadric $K^{\perp}\subset \wedge^3 V$:
\[
q_A^*:
%K^{\perp}\to \C;\quad 
\beta\mapsto \operatorname{vol}(\alpha\wedge \beta), \quad{\rm where}\quad Q_A(\alpha)=\beta.
\]  
Moreover, to each $v^*\in V^{\vee}$ we associate the quadric:
$$\textstyle q_{v^*}:\wedge^3 V \ni \omega \mapsto \operatorname{vol} (\omega(v^*)\wedge \omega) \in \mathbb{C}.$$
The quadrics  $q_{v^*}$ are the Pl\"ucker quadrics defining the Grassmannian $G(3, V)\subset \mathbb{P}(\wedge^3 V) $.
We denote by $S_A$ the smooth  K3 surface  (see \cite[Corollary 4.9]{EPWMichigan}) of genus 6 defined on $\mathbb{P}(K^{\perp})$ by the restrictions of the quadrics $q_{v^*}$ and the quadric $q_A^*$.
Let $S_A^{[2]}$ and $S_A^{[3]}$ denote the appropriate Hilbert schemes of points on $S_A$.
Observe that we have a natural isomorphism:
$$W^{\vee}=V^{\vee}\oplus V_0^{\vee} \in v^*+cv^*_0 \mapsto q_{v^*}+ c q_A^*\in H^0(\mathcal{I}_{S_A}(2))$$
We then have a rational two to one map:
\[\varphi : S_A^{[2]} \dashrightarrow \mathbb{P}(W)\]
well defined on the open subset consisting of reduced subschemes whose span is not contained in $G(3, V)$, by associating to $\{\beta_1,\beta_2\}\subset S_A$ the hyperplane in $W^{\vee}=H^0(\mathcal{I}_S(2))$ consisting of quadrics containing
the line $\langle\beta_1,\beta_2\rangle$.
Let us describe this map more precisely.  Since $\{\beta_1,\beta_2\}\subset K^{\perp}\subset \wedge^3 V\subset \wedge^3 W$,
 $\beta_i \wedge \kappa=0$ for $i=1,2$ and $\kappa\in K$ hence also for $\kappa\in \tilde{K}$. Thus $\beta_i\in \wedge^3 W$  is contained
in the space spanned by $A$ and $F_{[v_0]}$. It follows that there exist $\alpha_i \in \wedge^2 V$ such that $\beta_i+v_0\wedge \alpha_i \in A$. 
%This means, in particular, that $q^*_A(\lambda_1\beta_1+(\lambda_2\beta_2)=\alpha_i\wedge $
Let us fix such $\alpha_i$ (determined up to elements in $K$).  Then $Q_A(\alpha_i)=\beta_i$ and
\[
q^*_A(\lambda_1\beta_1+\lambda_2\beta_2)=\operatorname{vol}((\lambda_1\alpha_1+\lambda_2\alpha_2)\wedge(\lambda_1\beta_1+\lambda_2\beta_2))=\lambda_1\lambda_2\operatorname{vol}(\alpha_1\wedge\beta_2+\alpha_2\wedge \beta_1)
\]
since $q^*_A(\beta_1)=q^*_A(\beta_2)=0$.
But $A$ is Lagrangian, so we %(or equivalently, $q_A^*(\beta_i)=0, i=1,2 $)
have:
%\begin{itemize}
%\item $\alpha_i\wedge\beta_i=0$ for $i=1,2$
%\item $\operatorname{vol}(\alpha_1\wedge\beta_2)=\operatorname{vol}(\alpha_2\wedge \beta_1):=c_{12}$
%\end{itemize}
\[
\alpha_i\wedge\beta_i=0\quad  i=1,2\quad {\rm and }\quad 
\operatorname{vol}(\alpha_1\wedge\beta_2)=\operatorname{vol}(\alpha_2\wedge \beta_1):=c_{12}.
\]
Now, $\beta_1$ and $\beta_2$ are decomposable, i.e. $q_{v^*}(\beta_i)=0$, and their linear span is not contained in $G(3,V)$. We may therefore choose a basis $\{v_1,...,v_5\}$ for $V$ such that $\beta_1=v_1\wedge v_2\wedge v_3$ and $\beta_2=v_1\wedge v_4 \wedge v_5 $.  A direct computation now shows 
%\[
%q_{v_i^*}(\lambda_1\beta_1+\lambda_2\beta_2)=0;\quad i=2,...,5, 
%\]
%\[
%q_{v_1^*}(\lambda_1\beta_1+\lambda_2\beta_2)=2\lambda_1\lambda_2\quad {\rm and}\quad q^*_A(\lambda_1\beta_1+\lambda_2\beta_2)=2c_{12}\lambda_1\lambda_2
%\]
\[
(t_0 q^*_A+\sum_{i=1}^5 t_iq_{v_i^*})(\lambda_1\beta_1+\lambda_2\beta_2)=2t_0c_{12}\lambda_1\lambda_2+
2t_1\lambda_1\lambda_2\quad
\]
so 
\begin{equation}\label{eq explicit phi }
 \varphi( \{\beta_1,\beta_2\})=[c_{12}v_0+ v_1]\in \PP(W).
\end{equation}
It is proven in \cite{EPWMichigan} that $\varphi( \{\beta_1,\beta_2\})$ lies on the EPW sextic associated to $A$. Let us present the proof in a way that we will be able to further generalize.
It suffices to show that there are nonzero  scalars $x_1,x_2$ and an element $\kappa\in K$, such that 
$$(x_1(\beta_1+v_0\wedge \alpha_1)+x_2(\beta_2+v_0\wedge \alpha_2)+v_0\wedge\kappa)\wedge (c_{12}v_0+ v_1)=0.$$
Indeed, this implies $[x_1(\beta_1+v_0\wedge \alpha_1)+x_2(\beta_2+v_0\wedge \alpha_2)+v_0\wedge\kappa] \in \mathbb{P}(F_{[c_{12}v_0+ v_1]})\cap \mathbb{P}(A)$.
Let us now denote by $\kappa_1$, $\kappa_2$, $\kappa_3$ a basis of $K$, then we consider the equation 
\[
(x_1(\beta_1+v_0\wedge \alpha_1)+x_2(\beta_2+v_0\wedge \alpha_2)+\sum_{j=1}^3y_jv_0\wedge\kappa_j)\wedge (c_{12}v_0+ v_1)=0.
\]
i.e.
\[
(-x_1c_{12}v_0\wedge\beta_1-x_2c_{12}v_0\wedge\beta_2+x_1v_0\wedge \alpha_1\wedge v_1+x_2v_0\wedge \alpha_2\wedge v_1+\sum_{j=1}^3y_jv_0\wedge\kappa_j\wedge v_1)=0.
\]
To make this equation into a system of linear equations we multiply with the elements of basis in $\wedge^2V$ and compose with the volume map $\operatorname{vol}:\wedge^6W\to \mathbb{C}$.  

We obtain trivial equations when multiplying by $v_1\wedge v_i, i=2,3,4,5$.  
Multiplying with $v_2\wedge v_3$ we get 
\[
\kappa_i\wedge v_1\wedge v_2\wedge v_3=\kappa_i\wedge \beta_1=0, i=1,2,3,
\]
\[
\beta_1 \wedge v_2\wedge v_3=0,  \alpha_1\wedge v_1 \wedge v_2\wedge v_3=\alpha_1\wedge\beta_1=0
\]
 and 
 \[
 \alpha_2\wedge v_1 \wedge v_2\wedge v_3=\alpha_2\wedge\beta_1=c_{12}=c_{12}\operatorname{vol}(v_0\wedge\beta_2\wedge v_2\wedge v_3).
 \]  

So the equation multiplied with $v_2\wedge v_3$ is also trivial.  Similarly, the equation multiplied with $v_4\wedge v_5$ is trivial.  So the only nontrivial linear equations are obtained by multiplying by forms in $\langle v_2\wedge v_4, v_2\wedge v_5, v_3\wedge v_4, v_3\wedge v_5\rangle$.  Each of these 2-vectors annihilates $\beta_1$ and $\beta_2$, so we get the following four independent equations in $5$ variables, with a unique solution up to scalars:
\begin{align}
(x_1 \alpha_1+x_2 \alpha_2+\sum_{j=1}^3y_j\kappa_j)\wedge v_0\wedge v_1\wedge v_2\wedge v_4&=0.\nonumber\\
(x_1 \alpha_1+x_2 \alpha_2+\sum_{j=1}^3y_j\kappa_j)\wedge v_0\wedge v_1\wedge v_2\wedge v_5&=0.\nonumber\\
(x_1 \alpha_1+x_2 \alpha_2+\sum_{j=1}^3y_j\kappa_j)\wedge v_0\wedge v_1\wedge v_3\wedge v_4&=0.\nonumber\\
(x_1 \alpha_1+x_2 \alpha_2+\sum_{j=1}^3y_j\kappa_j)\wedge v_0\wedge v_1\wedge v_3\wedge v_5&=0.\nonumber
\end{align}

Let us now consider the rational map $\psi: S_A^{[3]}\to G(3,W)$ defined on general subschemes $\mathfrak{s}\subset S_A$ of length 3 as the $3$-codimensional space in $W^{\vee}=H^0(\mathcal{I}_S(2))$
consisting of those quadrics which contain the plane spanned by $\mathfrak{s}$.  It is clear that for a subscheme corresponding to a general triple of points $\{\beta_1,\beta_2,\beta_3\}$ we have
\begin{equation}\label{psi using phi}
\psi(\{\beta_1,\beta_2,\beta_3\})= [\langle\varphi( \{\beta_1,\beta_2\})\wedge  \varphi( \{\beta_1,\beta_3\})\wedge  \varphi( \{\beta_2,\beta_3\})\rangle].
\end{equation}

\begin{prop} \label{prop specialA} The map $\psi$ is a generically 2:1 rational map onto $D^2_A$.
\end{prop}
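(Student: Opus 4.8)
The plan is to establish three things: (a) that $\psi$ is defined on a dense open subset of $S_A^{[3]}$ and is there given by the span of the three ``edge points'' of the triple; (b) that its image lies in $D_2^A$; and (c) that it is dominant of degree $2$. For (a), let $\mathcal U\subset S_A^{[3]}$ be the dense open subset of reduced subschemes $\mathfrak s=\{\beta_1,\beta_2,\beta_3\}$ given by three distinct points whose span $P=\langle\beta_1,\beta_2,\beta_3\rangle$ is a plane not contained in $G(3,V)$, on which $\varphi$ is defined on all three pairs $\{\beta_i,\beta_j\}$, and for which the three conditions ``the quadric contains the line $\langle\beta_i,\beta_j\rangle$'' are independent on $W^\vee=H^0(\mathcal I_{S_A}(2))$. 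Since every $q\in W^\vee$ vanishes at $\beta_1,\beta_2,\beta_3$, such a $q$ contains $P$ exactly when it contains all three edge-lines; hence the quadrics through $P$ form $w_{12}^\perp\cap w_{13}^\perp\cap w_{23}^\perp=U^\perp$, where $w_{ij}=\varphi(\{\beta_i,\beta_j\})$ and $U=\langle w_{12},w_{13},w_{23}\rangle$. This is formula (\ref{psi using phi}), and on $\mathcal U$ the three $w_{ij}$ are independent, so $\psi(\mathfrak s)=[U]$ is a genuine point of $G(3,W)$.

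For (b) I must show $\dim(A\cap T_U)\ge 2$. From the proof of the formula for $\varphi$ reproduced above, each pair yields an element $\xi_{ij}\in A\cap F_{[w_{ij}]}$ of the explicit shape $\xi_{ij}=x_i\beta_i+x_j\beta_j+v_0\wedge\Gamma_{ij}$, with $\Gamma_{ij}$ lying, modulo $K$, in an explicit subspace of $\wedge^2 V$ determined by $\beta_i$ and $\beta_j$. Using $T_U=\wedge^2 U\wedge W$ and the characterisation $a\in T_U\iff a\wedge\omega=0$ for all $\omega\in\wedge^2 U$, I would solve the system $a\wedge\omega=0$, $\omega\in\wedge^2 U$, for $a$ in the span of $\xi_{12},\xi_{13},\xi_{23}$ and of $v_0\wedge K=A\cap F_{[v_0]}\subset A$, in a basis of $W$ adapted to $\mathfrak s$ (with $\beta_1=v_1\wedge v_2\wedge v_3$, $\beta_2=v_1\wedge v_4\wedge v_5$, and the intersections of the $3$-plane $V_{\beta_3}\subset V$ underlying $\beta_3$ with those of $\beta_1$ and $\beta_2$ prescribed), exactly as in the four-equation computation carried out above for $\varphi$. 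The expected outcome is a solution space of dimension at least $2$, so that $\dim(A\cap T_U)\ge 2$ and $\psi(\mathfrak s)\in D_2^A$. A cleaner geometric reformulation of the same fact: the three edge-lines of $\mathfrak s$ lie in the base locus of the net $U^\perp$, so any conic in the restricted system $U^\perp|_P$ vanishes on three distinct lines of the plane $P\cong\PP^2$ and hence is $0$; thus $P\subset\mathrm{Bs}(U^\perp)$, and this excess vanishing, transported through the Pragacz--Ratajski description \cite{PragaczRatajski} of the degeneracy loci, is precisely the condition $\dim(A\cap T_U)\ge 2$. I expect this step --- the concrete verification that $A\cap T_U$ jumps to dimension $\ge 2$ along the image of $\psi$ --- to be the main obstacle, just as the analogous verification was the heart of O'Grady's statement that $\varphi$ maps onto the EPW sextic.

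For (c): by Lemma \ref{irreducible D2} and Corollary \ref{cor of 2.9} --- applicable since for a general $[A]\in\Delta$ one has $\PP(A)\cap G(3,W)=\emptyset$ and $D_4^A=\emptyset$ (Lemma \ref{distinct delta gamma}) --- the scheme $D_2^A$ is integral of dimension $6$, while $S_A^{[3]}$ is irreducible of dimension $6$; hence by (b) it suffices to prove that a general $[U]\in D_2^A\setminus D_3^A$ has exactly two preimages under $\psi$. A triple $\mathfrak s\in\psi^{-1}([U])$ amounts to three points $\beta_1,\beta_2,\beta_3\in S_A$ whose three secant lines $\langle\beta_i,\beta_j\rangle$ all lie in $\mathrm{Bs}(U^\perp)$ and whose points $\varphi(\{\beta_i,\beta_j\})$ span $U$ --- that is, a ``triangle'' of secants of $S_A$ contained in $\mathrm{Bs}(U^\perp)$ and spanning $U$. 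Analysing the base locus of a general net of quadrics through $S_A$, in the spirit in which O'Grady analyses a general hyperplane of such quadrics to prove that $\varphi$ is $2:1$ (and using that $\varphi$ is itself $2:1$), I would show that $\mathrm{Bs}(U^\perp)$ contains precisely two such triangles. This makes $\psi$ generically finite, so its image --- contained in the irreducible sixfold $D_2^A$ by (b) and meeting its general point --- equals $D_2^A$, and $\deg\psi=2$, as asserted.
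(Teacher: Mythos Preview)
Your overall architecture---(a) set up $\psi$ via the three edge points, (b) show the image lands in $D_2^A$, (c) count two preimages---matches the paper's, and your plan for (b) via an explicit linear-algebra reduction is exactly what the paper does in Lemma~\ref{intersection is 2dimensional}: choose coordinates so that $\beta_1=v_1\wedge v_2\wedge v_3$, $\beta_2=v_1\wedge v_4\wedge v_5$, $\beta_3=v_2\wedge v_4\wedge(v_3+v_5)$, search inside the $6$-dimensional subspace of $A$ spanned by $\beta_i+v_0\wedge\alpha_i$ and $v_0\wedge K$, and reduce the system $a\wedge\omega=0$ for $\omega\in\wedge^2 U$ to four independent equations in six unknowns. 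Your alternative ``cleaner geometric reformulation,'' however, is a non-sequitur: the inclusion $P\subset\mathrm{Bs}(U^\perp)$ is tautological (that is how $U$ was defined), and nothing in the Pragacz--Ratajski formalism identifies ``the base locus of the net $U^\perp$ contains a plane'' with ``$\dim(A\cap T_U)\ge 2$.'' The degeneracy condition involves the Lagrangian $A$, not merely the base locus of a net of quadrics in $W^\vee$; you cannot bypass the computation this way.

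The real gap is in (c). Saying you would ``analyse the base locus of a general net of quadrics through $S_A$'' and find two triangles is not a proof; you have not identified any mechanism that forces the count to be two. The paper's argument (Lemma~\ref{two to one}) supplies exactly this mechanism: to three general points $\beta_1,\beta_2,\beta_3\in S_A\subset G(3,V)$ there is a unique $3$-space $U_{\beta_1,\beta_2,\beta_3}\subset V$ meeting each $U_{\beta_i}$ in dimension $2$, and the Schubert cycle of $3$-planes meeting $U_{\beta_1,\beta_2,\beta_3}$ in dimension $\ge 2$ cuts $\PP^6_A$ in a twisted cubic $C$ spanning a $\PP^3$. Then $C\cap S_A=C\cap q_A^*$ is six points, three of which are the $\beta_i$; the residual three $\gamma_1,\gamma_2,\gamma_3$ span the unique second plane in the restriction to $\langle C\rangle$ of every quadric through $S_A\cup\langle\beta_1,\beta_2,\beta_3\rangle$, and one checks from \eqref{psi using phi} and \eqref{eq explicit phi } that any other triple in the fibre must lie on the same twisted cubic. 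Without this twisted-cubic geometry (or an equivalent concrete device) your step (c) is only a hope, not an argument.
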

\begin{proof} Let $\beta_1$, $\beta_2$, $\beta_3$ be three general points on $S_A$. The proof then amounts to two lemmas:
\begin{lem}\label{two to one} The fiber of $\psi$,
\[ \psi^{-1}(\psi(\{\beta_1,\beta_2,\beta_3\}))=\{\{\beta_1,\beta_2,\beta_3\},\{\gamma_1,\gamma_2,\gamma_3\}\}
\]
 is two triples of points on $S_A$ whose union 
%$\{\beta_1,\beta_2,\beta_3,\gamma_1,\gamma_2,\gamma_3\}\subset S_A$ 
is a set of six distinct points on a twisted cubic contained in $G(3,V)$.
\end{lem}
\begin{proof} Let  $U_{\beta_1},U_{\beta_2},U_{\beta_3}\subset V$ be the subspaces corresponding to $\beta_1$, $\beta_2$, $\beta_3$. Then there exists a unique $3$-dimensional  subspace $U_{\beta_1,\beta_2, \beta_3}$ meeting each $U_{\beta_i}$ in a $2$-dimensional space. It follows that $U_{\beta_1},U_{\beta_2},U_{\beta_3}$ is contained in the intersection $C_{\beta_1,\beta_2,\beta_3}$ of $\mathbb{P}^6$ with the Schubert cycle $\mathcal{S}_{\beta_1,\beta_2,\beta_3}$ in $G(3,V)$
of three-spaces meeting $U_{\beta_1,\beta_2, \beta_3}$ in a $2$-dimensional space. Since $\mathcal{S}_{\beta_1,\beta_2,\beta_3}$ is a cone over $\mathbb{P}^1\times \mathbb{P}^2$ the considered intersection $C_{\beta_1,\beta_2,\beta_3}$ is, in general, a twisted cubic.
Moreover, under the generality assumption $C_{\beta_1,\beta_2,\beta_3}\cap S_A= C_{\beta_1,\beta_2,\beta_3}\cap q^*_A$ consists of six points. Three of them are $\beta_1,\beta_2,\beta_3 $ and the residual three will be denoted by $\gamma_1,\gamma_2,\gamma_3$.  The linear span of $C_{\beta_1,\beta_2,\beta_3}$ is a $\mathbb{P}^3$, we denote it by  $\mathbf{P}$, and its intersection with $G(3,V)$ is 
$\mathbf{P}\cap G(3,V)=C_{\beta_1,\beta_2,\beta_3}$. We denote by $\Pi$ the plane $\langle\beta_1,\beta_2,\beta_3\rangle$. Now, every quadric containing $S_A$ and $\Pi$,
when restricted to $\mathbf{P}$, decomposes into $\Pi$ and another plane $\Pi'$. Since, in general, $\Pi $ does not pass through $\gamma_i$ for $i=1,2,3$, the plane $\Pi' $ must pass through the points $\gamma_i$ for $i=1,2,3$. This means that $\Pi'=\langle\gamma_1,\gamma_2,\gamma_3\rangle$. It is then clear that $\psi(\{\beta_1,\beta_2,\beta_3\})=\psi(\{\gamma_1,\gamma_2,\gamma_3\})$.

Assume on the other hand that $\psi(\{\beta_1,\beta_2,\beta_3\})=\psi(\{\gamma'_1,\gamma'_2,\gamma'_3\})$. 
Then, by the equations \ref{psi using phi} and \ref{eq explicit phi }, we deduce that $U_{\beta_1,\beta_2, \beta_3}=U_{\gamma'_1,\gamma'_2, \gamma'_3}$ 
hence $C_{\beta_1,\beta_2, \beta_3}=C_{\gamma'_1,\gamma'_2, \gamma'_3}$.  It follows that $\langle\gamma'_1,\gamma'_2, \gamma'_3\rangle\subset \mathbf{P}$. 
But the net of quadrics corresponding to  $\psi(\{\beta_1,\beta_2,\beta_3\})=\psi(\{\gamma'_1,\gamma'_2,\gamma'_3\})$ 
define on $\mathbf{P}$ two planes $\langle\gamma_1,\gamma_2,\gamma_3\rangle$ and $\langle\beta_1,\beta_2,\beta_3\rangle$. 
It follows that $\{\gamma'_1,\gamma'_2,\gamma'_3\}=\{\beta_1,\beta_2,\beta_3\}$
or $\{\gamma'_1,\gamma'_2,\gamma'_3\}=\{\gamma_1,\gamma_2,\gamma_3\}$. Which ends the proof.
\end{proof}
\begin{lem}\label{intersection is 2dimensional}   $\dim (T_{\psi(\{\beta_1,\beta_2,\beta_3\})}\cap A)=2$
\end{lem}
\begin{proof} By appropriate choice of basis of $V$ we can assume, without loss of generality, that $\beta_1=v_1\wedge v_2\wedge v_3$, $\beta_2=v_1\wedge v_4 \wedge v_5 $, and $\beta_3=v_2\wedge v_4 \wedge (v_3+v_5)$.
 Observe as above  that $\beta_i \wedge \kappa=0$ for $i=1,2$ and $\kappa\in K$, hence $\beta_i $ is contained
in the space spanned by $A$ and $F_{[v_0]}$. It follows that there exist $\alpha_i \in \wedge^2 V$ such that $\beta_i+v_0\wedge \alpha_i \in A$.  We fix such $\alpha_i$ (determined modulo $K$).
Since $A$ is Lagrangian we have:
%\begin{itemize}
%\item $\alpha_i\wedge \beta_i=0$ for $i=1,2,3$
%\item $\alpha_1\wedge \beta_2=\alpha_2\wedge \beta_1:=c_{12}$
%\item $\alpha_1\wedge \beta_3=\alpha_3\wedge \beta_1:=c_{13}$
%\item $\alpha_2\wedge \beta_3=\alpha_3\wedge \beta_2:=c_{23}$
%\end{itemize}
\[
\alpha_i\wedge \beta_i=0,\quad i=1,2,3, \quad \alpha_1\wedge \beta_2=\alpha_2\wedge \beta_1:=c_{12},
\]
\[
\alpha_1\wedge \beta_3=\alpha_3\wedge \beta_1:=c_{13}\quad {\rm and}\quad 
\alpha_2\wedge \beta_3=\alpha_3\wedge \beta_2:=c_{23}.
\]
As above, a direct computation gives 
\[
\varphi(\{\beta_1,\beta_2\})=c_{12}v_0+v_1,\;\;
\varphi(\{\beta_1,\beta_3\})=c_{13}v_0+v_2,\;{\rm and}\;
\varphi(\{\beta_2,\beta_3\})=-c_{23}v_0+v_4.
\]
It follows that:
\begin{multline*}\textstyle
T_{\psi(\{\beta_1,\beta_2,\beta_3\})}=\{\omega \in \wedge^3 W| \omega\wedge (c_{12}v_0+v_1)\wedge (c_{13}v_0+v_2)= \\
\omega\wedge (c_{12}v_0+v_1)\wedge (-c_{23}v_0+v_4)=
\omega\wedge (c_{13}v_0+v_2)\wedge (-c_{23}v_0+v_4)=0\}.
\end{multline*}
Again we denote by $\kappa_1$, $\kappa_2$, $\kappa_3$ a basis of $K$.
Now, $\beta_i+v_0\wedge \alpha_i \in A$ and $K\wedge v_0\subset A$, so to prove the lemma it is enough to prove that the system of equations
\begin{equation}%\label{system of equations}
\nonumber
\begin{cases}
(\sum_{i=1}^3 x_i (\beta_i+v_0\wedge \alpha_i )+\sum_{j=1}^3 y_j v_0\wedge \kappa_j ) \wedge (c_{12}v_0+v_1) \wedge (c_{13}v_0+v_2)=0\\
(\sum_{i=1}^3 x_i (\beta_i+v_0\wedge \alpha_i )+\sum_{j=1}^3 y_j v_0\wedge \kappa_j ) \wedge (c_{12}v_0+v_1) \wedge (-c_{23}v_0+v_4)=0\\
(\sum_{i=1}^3 x_i (\beta_i+v_0\wedge \alpha_i )+\sum_{j=1}^3 y_j v_0\wedge \kappa_j ) \wedge (c_{13}v_0+v_2) \wedge (-c_{23}v_0+v_4)=0
\end{cases}
\end{equation}
in variables $x=(x_1, x_2, x_3)$, $y=(y_1, y_2, y_3)$ has a $2$-dimensional set of solutions satisfying  $x=(x_1,x_2,x_3)\neq 0$. 
By reductions as above and rearranging we get the system

\begin{equation}\label{newsystem}
 \begin{cases}
  v_0\wedge(-c_{12} x_2\beta_2 \wedge v_2 +c_{13} x_3\beta_3 \wedge v_1+
  (\sum_{i=1}^3 x_i \alpha_i+y_i \kappa_i)\wedge v_1\wedge v_2)=0\\

  v_0\wedge(-c_{12} x_1\beta_1 \wedge v_4 -c_{23}x_3 \beta_3 \wedge v_1+
  (\sum_{i=1}^3 x_i \alpha_i+y_i \kappa_i)\wedge v_1\wedge v_4)=0\\

  v_0\wedge(-c_{13}x_1 \beta_1 \wedge v_4 -c_{23} x_2\beta_2 \wedge v_2+
  (\sum_{i=1}^3 x_i \alpha_i+y_i \kappa_i)\wedge v_2\wedge v_4)=0
\end{cases}
 \end{equation}
 
 To make the system of equations (\ref{newsystem}) into a system of  linear equations we multiply each of the equation by the coordinate vectors and obtain  a  system of 18 linear equations in 6 coordinates. If we now denote the three left hand side expressions dependent on $(x,y)$ in the equations from (\ref{newsystem}) by $u_1(x,y)$, $u_2(x,y)$, $u_3(x,y)\in \wedge^5 W$, a straightforward computation, as above, shows that the following equations are trivial:
 \begin{align*}
 u_1(x,y)\wedge v_0=u_1(x,y)\wedge v_1=u_1(x,y)\wedge v_2=u_1(x,y)\wedge v_3=0,\\
 u_2(x,y)\wedge v_0=u_2(x,y)\wedge v_1=u_2(x,y)\wedge v_4=u_2(x,y)\wedge v_5=0,\\
 u_3(x,y)\wedge v_0=u_3(x,y)\wedge v_2=u_3(x,y)\wedge v_4=u_3(x,y)\wedge (v_3+v_5)=0.\\
  \end{align*}
The following products are equal
  \[
  u_1(x,y)\wedge v_4=-u_2(x,y)\wedge v_2=u_3(x,y)\wedge v_1=(\sum_{i=1}^3 x_i \alpha_i+y_i \kappa_i)\wedge v_0\wedge v_1 \wedge v_2\wedge v_4,
  \]
  while
   \begin{align*}
 u_1(x,y)\wedge v_5&=c_{13}x_3 v_0\wedge \ldots\wedge v_5+(\sum_{i=1}^3 x_i \alpha_i+y_i \kappa_i)\wedge v_0\wedge v_1 \wedge v_2\wedge v_5  \\
 u_2(x,y)\wedge v_3&=c_{23}x_3 v_0\wedge \ldots\wedge v_5-(\sum_{i=1}^3 x_i \alpha_i+y_i \kappa_i)\wedge v_0\wedge v_1 \wedge v_3\wedge v_4,\\
 u_3(x,y)\wedge (v_3-v_5)&=(c_{13}x_1-c_{23}x_2) v_0\wedge \ldots\wedge v_5\\
 &- (\sum_{i=1}^3 x_i \alpha_i+y_i \kappa_i)\wedge v_0\wedge v_2 \wedge v_4\wedge (v_3-v_5).\\
 \end{align*}
So the 18 linear equations are reduced the following four independent ones:
 \begin{align*}
 u_1(x,y)\wedge v_4=0, u_1(x,y)\wedge v_5=0,\\
 u_2(x,y)\wedge v_3=0, u_3(x,y)\wedge (v_3-v_5)=0.
 \end{align*}

  It follows that the system of linear equations admits a $2$-dimensional system of solutions.  To prove that nonzero solutions satisfy $x\neq 0$ it is enough to observe that a solution with $x=0$ is a $3$-vector $v_0\wedge\kappa$ with $\kappa\in K$ such that 
 $$\kappa\wedge v_1\wedge v_2=\kappa\wedge v_1\wedge v_4= \kappa\wedge v_2\wedge v_4=0.$$
But any such $\kappa$ lies in the space $\langle v_1\wedge v_2, v_1\wedge v_4,  v_2\wedge v_4\rangle =\wedge^2\langle v_1,v_2,v_4\rangle$.  By assumption, $\PP(v_0\wedge K)\subset \PP(A)$ does not intersect $G(3,W)$, so this is impossible.
   Therefore the only solution of the system (\ref{newsystem}) satisfying $x=0$ is $(x,y)=(0,0)$.
\end{proof}
Proposition \ref{prop specialA} follows immediately from Lemmas \ref{two to one} and \ref{intersection is 2dimensional}.\end{proof}

\begin{rem} There is an alternative approach to the Proposition \ref{prop specialA}.
We consider the intersection $F$ of the quadrics containing $S_A$ together with a generic plane $B=\langle\beta_1,\beta_2,\beta_3  \rangle$.
This is a complete intersection of degree $8$ with six ordinary double points that span a $3$-space.  Three of them are the points of intersection of $B\cap S_A$ and the residual three points span another plane $B'$ contained in our Fano threefold  $F$.
Since $S_A$ does not contain any plane curve, if a plane passes through three points of $S_A$ these points are isolated in the intersection.
 If the plane is contained in $F$, the three points must therefore be three of the six ordinary double points.  Since the 3-space spanned by $B$ and $B'$ cuts $F$ along the sum of $B\cup B'$ it follows that the degree of $\psi$ is two at the point corresponding to $F$. % \krtodo{This does not make sense without precise additional claims}
 On the other hand the generic complete intersection of three quadrics containing $S_A$ have also six ordinary double points.
The six ordinary double points spans a $\PP^3$ and a complete intersection $F$ of degree $8$ that contain $S_A$ corresponds to a point of the EPW cube when the intersection
of this $\PP^3$ with $F$ is a reducible quadric. 
\end{rem}
Next, we compute the codimension of the indeterminacy locus and the ramification locus of $\psi$.
\begin{prop} \label{exceptional} The rational map $\psi$ is well defined outside a set of codimension 2.
Moreover, the ramification locus of $\psi$ is of codimension $\geq 2$.
\end{prop}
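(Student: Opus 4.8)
The plan is to bound separately the indeterminacy locus and the ramification locus of $\psi$, using the description of $\psi(\mathfrak{s})$ as the space of quadrics through $\langle\mathfrak{s}\rangle$ given by (\ref{psi using phi}) and the residual-triple picture of Lemma \ref{two to one}.

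For the indeterminacy, observe that $\psi$ is given on $\mathfrak{s}$ by the subspace of $W^\vee=H^0(\mathcal{I}_{S_A}(2))$ of quadrics vanishing on $\langle\mathfrak{s}\rangle$, and that this subspace is a point of $G(3,W)$ exactly when $\langle\mathfrak{s}\rangle$ is a plane $\Pi$ and the restriction map $r_\Pi\colon W^\vee\to H^0(\mathcal{O}_\Pi(2))$ has rank $3$. Hence $\mathrm{Indet}(\psi)$ is contained in the union of (i) the $\mathfrak{s}$ spanning only a line, and (ii) the $\mathfrak{s}$ spanning a plane $\Pi$ with $\mathrm{rk}\,r_\Pi\le 2$. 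Locus (i) is empty: a length-$3$ subscheme of a line $\ell$ makes $\ell$ meet $S_A$ in length $\ge 3$, so $\ell\subset S_A$ (the surface $S_A$ is cut scheme-theoretically by the six quadrics of $W^\vee$, and a line not on a quadric meets it in length $\le 2$); but a general $S_A$, being a genus-$6$ K3 of Picard number one, contains no line (see \cite{EPWMichigan}). For locus (ii): if $\mathrm{rk}\,r_\Pi\le 2$ then $S_A\cap\Pi=V(r_\Pi(W^\vee))$ is cut on $\Pi\cong\PP^2$ by at most a pencil of conics; since $S_A$ contains neither a conic nor a line this pencil has no fixed component, so $S_A\cap\Pi$ equals its base locus and has length exactly $4$, i.e. $\Pi$ is a $4$-secant plane and $\mathfrak{s}\subset S_A\cap\Pi$. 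A coplanar length-$4$ subscheme $z\subset S_A$ satisfies $h^0(\mathcal{I}_z(1))\ge 4$, a determinantal condition of codimension $(7-3)(4-3)=4$ for general $S_A$ on the $8$-dimensional $S_A^{[4]}$; so the coplanar quadruples --- hence the $4$-secant planes --- form a family of dimension $\le 4$, and each such plane contains only finitely many length-$3$ subschemes of $S_A$. Thus $\dim\mathrm{Indet}(\psi)\le 4$, i.e. codimension $\ge 2$ in $S_A^{[3]}$.

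For the ramification, recall from Proposition \ref{prop specialA} that $\psi$ is generically $2:1$ onto $D^A_2$; let $\tau$ be the deck involution. By Lemma \ref{two to one}, $\tau$ sends $\mathfrak{s}=\{\beta_1,\beta_2,\beta_3\}$ to the residual triple $\{\gamma_1,\gamma_2,\gamma_3\}$, where $\beta_1+\beta_2+\beta_3+\gamma_1+\gamma_2+\gamma_3$ is the divisor cut on the twisted cubic $C_{\mathfrak{s}}=\PP^6\cap\mathcal{S}_{\mathfrak{s}}\subset G(3,V)$ by $q^*_A$. Away from $\mathrm{Indet}(\psi)$ the ramification locus of $\psi$ is the closure of $\mathrm{Fix}(\tau)$, and $\tau(\mathfrak{s})=\mathfrak{s}$ precisely when $\gamma_i=\beta_i$, i.e. when $q^*_A|_{C_{\mathfrak{s}}}=2(\beta_1+\beta_2+\beta_3)$, so that $q^*_A$ is totally tangent to $C_{\mathfrak{s}}$. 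Since $C_{\mathfrak{s}}$ depends only on the $3$-space $U_{\mathfrak{s}}\subset V$ (the unique $3$-space meeting $U_{\beta_i}$ in dimension $2$ for each $i$), this confines $U_{\mathfrak{s}}$ to the locus in $G(3,V)$ where $q^*_A|_{C_U}$ has at least two double points, which for general $[A]\in\Delta$ has codimension $\ge 2$ (the locus with at least one double point is a hypersurface, and imposing a further tangency cuts it down); as $\mathfrak{s}=\tfrac12\,q^*_A|_{C_{U_{\mathfrak{s}}}}$ is recovered from $U_{\mathfrak{s}}$, we get $\dim\mathrm{Fix}(\tau)\le 4$, hence the ramification locus has codimension $\ge 2$. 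This is consistent with the branch of the $2:1$ map being $D^A_3=\mathrm{Sing}(D^A_2)$, of codimension $3$: the double cover resolves the transverse $\frac{1}{2}(1,1,1)$-singularities of $D^A_2$ along $D^A_3$ rather than branching along a divisor.

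The main obstacle is part (ii) of the indeterminacy analysis --- ruling out a divisorial component of $\mathrm{Indet}(\psi)$ --- which is handled by using the genericity of $S_A$ (no lines, no conics) to reduce it to the sharply bounded family of $4$-secant planes; the ramification estimate is then a straightforward consequence of the residual-triple description in Lemma \ref{two to one}.
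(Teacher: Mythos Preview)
Your overall strategy matches the paper's: reduce the indeterminacy to $4$-secant planes, and the ramification to the ``totally tangent'' condition on the twisted cubic $C_{\mathfrak{s}}$.  The ramification argument is close to the paper's (which phrases the count via an incidence $\mathcal{X}\subset \mathcal{H}_A\times H^0(\mathcal{O}_{\PP^6}(2))$), though note that the residual-triple picture of Lemma~\ref{two to one} is only valid on an open set; the paper first removes the $4$-dimensional loci $\mathcal{B}_1,\mathcal{B}_2$ (schemes supported on a conic or meeting a line in $F_A$ in length~$2$) via Lemma~\ref{unique twisted cubic} before invoking the twisted-cubic map, and you should do the same.

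The genuine gap is in your indeterminacy bound.  You invoke the determinantal formula: the locus in $S_A^{[4]}$ where the evaluation map $H^0(\mathcal{O}_{\PP^6}(1))\otimes\mathcal{O}\to \mathcal{E}^{[4]}$ drops rank has \emph{expected} codimension $(7-3)(4-3)=4$.  But the standard determinantal estimate only gives codimension $\le 4$, i.e.\ dimension $\ge 4$ --- the wrong direction.  Asserting that equality holds ``for general $S_A$'' is exactly what needs proof, and it is not automatic: one would need a transversality argument specific to this evaluation map.  The paper avoids this entirely by exploiting the Grassmannian geometry you have discarded.  Since $S_A\subset G(3,V)$, a $4$-secant plane $\Pi$ meets $G(3,V)$ in a scheme of length $\ge 4$; by \cite[proof of Lemma~2.2]{MukaiGrassmannian} such a plane section of $G(3,V)$ must contain a line or a conic, and this produces a finite map from the indeterminacy locus to the Hilbert scheme of conics in $F_A=G(3,V)\cap\PP^6$, which is concretely $4$-dimensional (it maps birationally to $\PP(V)$ via $\mathfrak{c}\mapsto \bigcap_{[U]\in\mathfrak{c}}U$).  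That is the step your determinantal shortcut is missing.
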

Before we pass to the proof of the Proposition we introduce some more notation. Recall first that, by the assumption on generality of $A$, we know that $S_A$ does not contain any line, conic or twisted  cubic. 
Let $F_A$ be the Fano threefold obtained as the intersection $G(3,V)\cap \langle S_A\rangle$. By the generality of $A$, it follows that $F_A$ is smooth.  Let $[U]\in G(3,V)$. Consider the Schubert cycle $\mathcal{S}_{U}=\{U'\in G(3,V)| \dim(U\cap U')\geq 2\}$. It is clear that in the Pl\"ucker embedding of $G(3,V)\subset \mathbb{P}(\wedge^3 V)$ the variety  $\mathcal{S}_{U}$ is the tangent cone of $G(3,V)$ in $[U]$. It spans the projective  tangent space and is a cone over $\mathbb{P}^1\times \mathbb{P}^2$ with vertex $[U]$. 
We are interested in intersections $\mathcal{S}_{U}\cap F_A$. Note that $F_A$ is of degree 5 and has Picard group of rank 1 generated by the hyperplane class. Hence $F_A$ does not contain any surface of degree $\leq 4$. It follows that $\mathbb{P}^6_A\cap \mathcal{S}_{U}=F_A \cap \mathcal{S}_{U}$ is a cubic curve,   a possibly reducible or nonreduced degeneration of a twisted cubic curve. We denote the corresponding subscheme of the Hilbert scheme of twisted cubics in $F_A$ by $\mathcal{H}_A$.

Let $\mathcal{B}_1$ be the subset of $S_A^{[3]}$ consisting of those subschemes that are contained in a conic in $F_A\subset G(3,V)$. Since $F_A $ is a linear section of $G(3,V)$ and contains no planes, the Hilbert scheme of conics in $F_A$ admits a birational map to $\mathbb{P}(V)$ associating to a conic $\mathfrak{c}$ the intersection of three-spaces parametrized by points on $\mathfrak{c}$. It is hence of dimension 4 and we get that $\mathcal{B}_1$ is of dimension 4.
Let  $\mathcal{B}_2$ be the subset of $S^{[3]}_A$ consisting of those subschemes that meet some line contained in $G(3,V)$ in a scheme of length two. Then $\mathcal{B}_2$ is also of dimension 4, since the Hilbert scheme of lines in $F_A$ isomorphic to $\mathbb{P}^2$ (cf. \cite[Proposition 5.2]{EPWMichigan}, \cite{Iskovskih}). 
\begin{lem}\label{unique twisted cubic} Let $\mathfrak{s}$ be a subscheme of length 3 in $S_A$ corresponding to a point from $S_A^{[3]}\setminus(\mathcal{B}_1\cup\mathcal{B}_2 )$. Then there is a unique, possibly degenerate, twisted cubic from $\mathcal{H}_A$ that contains $\mathfrak{s}$. Furthermore, the induced map $S_A^{[3]}\setminus(\mathcal{B}_1\cup\mathcal{B}_2 )\to \mathcal{H}_A$ is dominant.
\end{lem}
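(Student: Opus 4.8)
The plan is to establish existence, uniqueness, and dominance separately, all by exploiting the explicit description of the elements of $\mathcal{H}_A$ as sections $\mathcal{S}_U\cap F_A$ of Schubert cones by the linear span $\mathbb{P}^6_A=\langle S_A\rangle$, together with the birational parametrization $U\mapsto(\mathcal{S}_U\cap F_A)$ of $\mathcal{H}_A$ by $G(3,V)$. First I would prove existence. Given $\mathfrak{s}\subset S_A$ of length $3$ lying outside $\mathcal{B}_1\cup\mathcal{B}_2$, I want a $3$-dimensional subspace $U\subset V$ meeting each of the (up to length) three spaces $U_{\beta_i}$ in a $2$-dimensional subspace, exactly as in the proof of Lemma \ref{two to one}. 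For a reduced triple $\{\beta_1,\beta_2,\beta_3\}$ in general position such a $U$ exists and is unique (the three $2$-planes $U_{\beta_i}\cap U$ are forced, and a dimension count shows that generically a unique $U$ arises); for non-reduced or collinear configurations the same conclusion follows by continuity together with the hypothesis that $\mathfrak{s}\notin\mathcal{B}_1\cup\mathcal{B}_2$, which is precisely what rules out the degenerate loci where either no such $U$ or a positive-dimensional family of such $U$ would appear. Then $\mathfrak{s}\subset\mathcal{S}_U$, hence $\mathfrak{s}\subset\mathcal{S}_U\cap\mathbb{P}^6_A=\mathcal{S}_U\cap F_A$, and by the preceding discussion $\mathcal{S}_U\cap F_A$ is a cubic curve in $\mathcal{H}_A$ containing $\mathfrak{s}$.

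Next I would prove uniqueness. Suppose $C,C'\in\mathcal{H}_A$ both contain $\mathfrak{s}$, say $C=\mathcal{S}_U\cap F_A$ and $C'=\mathcal{S}_{U'}\cap F_A$. The key point is that a cubic curve $\mathcal{S}_U\cap F_A$ spans a $\mathbb{P}^3$ inside $\mathbb{P}^6_A$, and $U$ can be recovered from this curve as the common $2$-dimensional intersection of the $3$-spaces parametrized by its points — more precisely, $\langle C\rangle\cap G(3,V)=C$ and the curve determines $U$ uniquely. Since $\mathfrak{s}$ is not contained in a line or conic of $G(3,V)$, the scheme $\mathfrak{s}$ spans a plane $\Pi$ that is not contained in $G(3,V)$ and meets $G(3,V)$ only in $\mathfrak{s}$; any twisted cubic (or degeneration) through $\mathfrak{s}$ must have $\langle C\rangle\supset\Pi$, and a length-$3$ subscheme spanning a plane lies on at most one twisted cubic in a fixed $\mathbb{P}^3$ only after pinning down that $\mathbb{P}^3$. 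I would pin it down by showing $U=U'$: the three $2$-planes $U_{\beta_i}\cap U$ and $U_{\beta_i}\cap U'$ agree for generic $\mathfrak{s}$ (each $U_{\beta_i}\cap U$ is the unique $2$-plane in $U_{\beta_i}$ through which a Schubert cone containing $\beta_i$ must pass once we know $C$ locally at $\beta_i$), forcing $U=U'$ by the same rigidity used for existence, whence $C=C'$. The genericity reductions to the reduced case again use that $\mathfrak{s}\notin\mathcal{B}_1\cup\mathcal{B}_2$.

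Finally, dominance of $S_A^{[3]}\setminus(\mathcal{B}_1\cup\mathcal{B}_2)\to\mathcal{H}_A$: a general member $C\in\mathcal{H}_A$ is a genuine twisted cubic, and $C\cap S_A=C\cap q_A^*$ is a degree-$6$ divisor on $C$, hence (for generic $C$) six distinct points, none of which lies on a line or conic of $F_A$ by the generality of $A$. Any three of these six points give a length-$3$ subscheme of $S_A$ lying on $C$ and avoiding $\mathcal{B}_1\cup\mathcal{B}_2$, so $C$ is in the image; since $\dim\mathcal{H}_A=\dim G(3,V)=9$ and a general fiber of the map is (at least) the finite set $\binom{6}{3}$ of such triples — matched against $\dim S_A^{[3]}=9$ — the map is dominant. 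The main obstacle I anticipate is the careful handling of the degenerate configurations: proving that existence and uniqueness of the Schubert-cone cubic persist for non-reduced or special $\mathfrak{s}$ precisely when $\mathfrak{s}\notin\mathcal{B}_1\cup\mathcal{B}_2$, i.e. checking that the loci where the correspondence $\mathfrak{s}\leftrightarrow U$ degenerates are exactly subschemes contained in a conic of $F_A$ or meeting a line of $G(3,V)$ in length $\geq 2$. This is where the dimension estimates $\dim\mathcal{B}_1=\dim\mathcal{B}_2=4$ and the absence of lines, conics and twisted cubics in $S_A$ do the real work.
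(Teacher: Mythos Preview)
Your overall architecture matches the paper's (find the unique $U$ with $\dim(U\cap U_{\beta_i})\ge 2$, then take $\mathcal{S}_U\cap F_A$; get dominance from $C\cap q_A^*$), but the existence/uniqueness step has a real gap, and the paper fills it with an idea you are missing.

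Appealing to ``continuity'' and ``genericity reductions to the reduced case'' is not a proof: you must show that for \emph{every} $\mathfrak{s}$ in the open set $S_A^{[3]}\setminus(\mathcal{B}_1\cup\mathcal{B}_2)$ there is exactly one such $U$, and a limit of unique solutions can fail to be unique. The paper handles this cleanly by passing to the dual description $G(3,V)\cong G(2,V^\vee)$: a point $[U_{\beta_i}]$ becomes a line $\ell_i\subset\PP(V^\vee)=\PP^4$, and the sought $U$ becomes a common transversal line to $\ell_1,\ell_2,\ell_3$. The hypothesis $\mathfrak{s}\notin\mathcal{B}_2$ says no two of the $\ell_i$ are coplanar (two points on a line in $G(3,V)$ correspond to two incident lines), and $\mathfrak{s}\notin\mathcal{B}_1$ says the $\ell_i$ span $\PP^4$ (three points on a conic in $F_A$ correspond to three lines in a common hyperplane). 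For three pairwise skew lines spanning $\PP^4$ there is a unique common transversal, and this elementary projective fact gives existence and uniqueness simultaneously, with no separate degeneration analysis. Your uniqueness paragraph, by contrast, is circular where it claims $U_{\beta_i}\cap U=U_{\beta_i}\cap U'$, and the assertion about twisted cubics through a planar length-$3$ scheme in a fixed $\PP^3$ is false (there is a $3$-parameter family).

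Two numerical slips: $\dim G(3,V)=\dim G(3,5)=6$, not $9$, and $\dim S_A^{[3]}=6$, not $9$; you may be confusing $V$ with $W$. The dominance argument survives, and is the same as the paper's.
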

\begin{proof}
Since $S_A\subset G(3,V)\cong G(2,V^{\vee})$, we may characterize the elements of $\sigma\in S_A^{[3]}$ via the incidence of curves $C_\sigma$ of degree 3 in $\PP(V^{\vee})$ supported on lines.  For a general $\sigma$,  the curve $C_\sigma$ is the union of three lines and has a unique transversal line, a line that meet all three lines.  If $\sigma \in S_A^{[3]}\setminus(\mathcal{B}_1\cup\mathcal{B}_2 )$ the curve $C_\sigma$ spans  $\mathbb{P}(V^{\vee})$ and contains no conic. It follows that $C_\sigma$ admits a unique transversal line hence $\mathfrak{s}_{\sigma}$ is contained in $\mathcal{S}_{U}$ for a unique $U$. We conclude by the definition of $\mathcal{H}_A$. For dominancy of the map we observe that if $\mathfrak{c}\in \mathcal{H}_A$ then $\mathfrak{c}\cap q^*_A\subset S_A$ and clearly contains a subscheme in $S_A^{[3]}\setminus(\mathcal{B}_1\cup\mathcal{B}_2)$ .
\end{proof}
We can now pass to the proof of Proposition \ref{exceptional}
\begin{proof}[Proof of Proposition \ref{exceptional}]  Any subscheme $\mathfrak{s}$ of length 3 in $S_A$ spans a plane $\Pi_{\mathfrak{s}}$. The map $\psi$ associates to $\mathfrak{s}$ the space $V^q_{\mathfrak{s}}$ of quadrics containing $S_A\cup \Pi_{\mathfrak{s}}$. For general $\mathfrak{s}$ the latter is a space of dimension 3. Now, $\psi$ is well defined exactly on those $\mathfrak{s}$ for which $\dim V^q_{\mathfrak{s}}=3$. But $V^q_{\mathfrak{s}}$
is the kernel of the restriction map $H^0(S_A,\mathcal{I}_{S_A}(2))\to H^0(\Pi_{\mathfrak{s}}, \mathcal{I}_{S_A\cap \Pi_{\mathfrak{s}}}(2)).$ The latter kernel is 3-dimensional unless  $\dim H^0(\Pi_{\mathfrak{s}}, \mathcal{I}_{S_A\cap \Pi_{\mathfrak{s}}}(2))\leq 2$. Hence $\psi$ is not defined only if
$S_A\cap \Pi_{\mathfrak{s}}$ has length at least 4. Then the intersection $\Pi_{\mathfrak{s}}\cap G(3,V)$ contains a scheme of length 4. As $S_A$ contains no conics,  $\Pi_{\mathfrak{s}}$ cannot be contained in $G(3,V)$. We infer by \cite[proof of Lemma 2.2]{MukaiGrassmannian} that $\Pi_{\mathfrak{s}}\cap G(3,V)$ contains a line or a unique conic.  If $\Pi_{\mathfrak{s}}\cap G(3,V)$ contains a line, then it is either a reducible conic or the union of this line with a point. In the latter case, since $S_A$ contains no lines,  the intersection $\Pi_{\mathfrak{s}}\cap S_A$ does not contain any subscheme of length 4. It follows that there is a map with finite fibers from the indeterminacy locus of $\psi$ to the  Hilbert scheme of conics in  $G(3,V)\cap \mathbb{P}^6$ which is of dimension 4. 
We conclude that the indeterminacy locus is of dimension at most 4. In fact it is equal to 4 since a general $V_4\subset V$ defines a conic in $G(3,V)\cap \mathbb{P}^6$ which meets $S_A$ in four points.

Finally to bound the dimension of the ramification locus, we again let $\mathfrak{s}$ be a subscheme of length 3 in $S_A$ corresponding to a point from $S_A^{[3]}\setminus(\mathcal{B}_1\cup\mathcal{B}_2 )$. Then by Lemma \ref{unique twisted cubic} there is a possibly degenerate twisted cubic from $\mathcal{H}_A$ spanning a $\mathbb{P}^3$ and containing $\mathfrak{s}$. Now, from the proof of Proposition \ref{prop specialA} we know that a point from $S_A^{[3]}\setminus(\mathcal{B}_1\cup\mathcal{B}_2 )$ can be in the ramification locus of $\psi$ only if the quadric $Q_A$ is totally tangent to the twisted cubic. The latter is a codimension 3 condition on twisted cubics in $G(3,V)\cap \mathbb{P}^6$, hence  by Lemma \ref{unique twisted cubic} a codimension 3 condition for the ramification locus. To be more precise we have an incidence:
$$\mathcal{X}=\{(C,Q)\in \mathcal{H}_A\times H^0(O_{\PP^6}(2))|\;\; Q|_C\;\; \text{is totally non reduced}\}.$$
 We compute its dimension from the projection onto $\mathcal{H}_A$. Indeed, fixing $C$ we get  a codimension 3 space of quadrics totally tangent to it. The dimension of the general fiber of the second projection follows giving codimension 3 in $\mathcal{H}_A$.
\end{proof}

\section{the proof of Theorem \ref{main}}\label{proof}
Let us choose a generic Lagrangian space $A_0$ satisfying $[A_0]\in \Delta \setminus (\Gamma\cup \Sigma)\subset LG_\eta(10,\wedge^3W)$.
Note that from Lemma \ref{distinct delta gamma}, we can choose $A_0$ such that $K$ is generic in $F_{[v_0]}$.
From Proposition \ref{prop specialA} there is a rational $2:1$ map  $\psi: S_{A_0}^{[3]}\to D^{A_0}_2$.
On the other hand from Proposition \ref{existence and smoothness of double cover} there exists a double cover $Y_{A_0}\to D^{A_0}_2$ such that $Y_{A_0}$ is a smooth sixfold with trivial canonical bundle.
Our aim is to construct a birational map \[ S_{A_0}^{[3]}\dashrightarrow X_{A_0}. \]

We consider the subset $\mathcal{B}$ in $S_{A_0}^{[3]}$, the union of the indeterminacy locus and the ramification locus of the rational $2:1$ map $\psi \colon S_{A_0}^{[3]}\to D_2^{A_0}$. Clearly the restriction of the map $\psi$ to $S_{A_0}^{[3]}\setminus \mathcal{B}$ is an \'etale covering of degree $2$ onto a smooth open subset $\mathcal{D}\subset D_2^{A_0}$. In particular $\mathcal{D}\cap D_3^{A_0}=\emptyset.$
Note that  $S_{A_0}^{[3]}$ is simply connected and by Proposition \ref{exceptional} the subset $\mathcal{B}$ is of codimension $2$. This implies that $S_{A_0}^{[3]}\setminus \mathcal{B}$ is also simply connected.
It follows that $\pi_1(\mathcal{D})=\mathbb{Z}_2$ and $\psi |_{S_{A_0}^{[3]}\setminus \mathcal{B}}$ is a universal covering.

Since  $\mathcal{D}$ is disjoint from $D_3^{A_0}$, the restriction of the double cover $f_{A_0}\colon Y_{A_0}\to D_2^{A_0}$ to $f_{A_0}^{-1}(\mathcal{D})$ is also an \'etale covering.

By Proposition \ref{existence and smoothness of double cover} the variety $Y_{A_0}$
is smooth and irreducible. It follows that the \'etale covering $f_{A_0}|_{f_{A_0}^{-1}(\mathcal{D})}$ is not trivial.
 We infer that $f_{A_0}|_{f_{A_0}^{-1}(\mathcal{D})}$ is also the universal covering, and deduce that $Y_{A_0}$ is birational to $S_{A_0}^{[3]}$. 

Note that the fact that $f_{A_0}|_{f_{A_0}^{-1}(\mathcal{D})}$ is the universal covering implies $f_{A_0}^{-1}(\mathcal{D})$  is simply connected. It follows that $Y_{A_0}$ is also simply connected because $f_{A_0}^{-1}(\mathcal{D})$ is obtained from the smooth variety $Y_{A_0}$ by removing a subset of real codimension 2. Moreover, since both $Y_{A_0}$ and $S^{[3]}$ have trivial canonical bundle, by \cite[Theorem~1.1]{Ito} they have equal Hodge numbers. Thus $$h^{2}(\oo_{Y_{A_0}})=h^2(\oo_{S_{A_0}^{[3]}})=1.$$
From the Beauville classification theorem \cite[Theorem~2]{Beauville} we infer that $Y_{A_0}$ is IHS.

Recall the notation $$LG_\eta^1(10,\wedge^3 W):=\{[A]\in LG_{\eta}(10,\wedge^3 W)| \mathbb{P}(A)\cap G(3,W)=\emptyset, \forall [U]\in G(3,W): \dim(A\cap T_U)\leq 3 \}.$$ Consider now the varieties: 
$$\mathcal{D}_k=\{ ([A],[U])\in LG_\eta^1(10,\wedge^3 W)\times G(3,W)| [U]\in D_k^{A}  \},$$
for $k=2,3$. By globalizing the construction in Proposition \ref{existence and smoothness of double cover} to the affine variety $LG_\eta^1(10,\wedge^3 W)$ we construct a variety $\mathcal{Y}$ which is a double cover of $\mathcal{D}_2 $ branched in $\mathcal{D}_3$. We get a smooth family $$\mathcal{Y}\to LG_{\eta}^1(10,\wedge^3 W) $$
 with fibers  $\mathcal{Y}_{[A]}=Y_{A}$ polarized by the divisor defining the double cover. In particular a special fiber $\mathcal{Y}_{[A_0]}=Y_{A_0}$ is an IHS manifold. 
Since a smooth deformation of an IHS manifold is still IHS we obtain that $Y_A$ is IHS for every $A\in LG_\eta^1(10,\wedge^3 W)$. So $\mathcal{Y}\to LG_\eta^1(10,\wedge^3 W)$ is a family of IHS manifolds.

In order to show that the IHS sixfolds in the family  $\mathcal{Y}$ are of $K3^{[3]}$-type we use the fact proved above that $S_{A_0}^{[3]}$ and $Y_{A_0}$ are birational. Indeed, two birational IHS manifolds are deformation equivalent from \cite[Theorem~4.6]{Huybrechts}. The Beauville-Bogomolov degree $q=4$ of our polarization follows from our computation of degree in Section \ref{invariants}.

We end the proof of Theorem \ref{main} by performing a study of the moduli map defined by the family $\mathcal{Y}$.

\begin{prop} Let $\mathcal{M}$ be the coarse moduli space of polarized  IHS %hyperk\"{a}her 
sixfolds of $K3^{[3]}$-type and Beauville-Bogomolov degree $4$. Let  
$$\mathfrak{M}_{\mathcal{Y}} :LG_{\eta}^1(10,\wedge^3 W)\to \mathcal{M}, \quad [A] \mapsto [Y_{A}] $$
 be the map given by $\mathcal{Y}$. The image of $\mathfrak{M}_{\mathcal{Y}}$ is a dense open subset of a component of dimension 20 in $\mathcal{M}$. 
\end{prop}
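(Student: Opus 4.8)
The plan is to reduce the statement, by a Chevalley-type argument, to the single assertion that $\operatorname{Im}\mathfrak M_{\mathcal Y}$ has dimension $20$, and to establish that by combining a $PGL(W)$-orbit count with a reconstruction of $A$ from the polarized variety $(Y_A,\xi)$. First recall that every irreducible component of $\mathcal M$ has dimension $20$: deformations of an IHS manifold are unobstructed, so near $[Y_A]$ the space $\mathcal M$ is smooth of dimension $h^{1,1}(Y_A)-1=b_2(Y_A)-3=20$, the polarization being a fixed class in $H^2$; cf.\ \cite{Beauville}. Now $\mathfrak M_{\mathcal Y}$ is a morphism of quasi-projective varieties, so $\operatorname{Im}\mathfrak M_{\mathcal Y}$ is constructible, and it is irreducible, being the image of the irreducible variety $LG^1_\eta(10,\wedge^3W)$. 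Once we know it has dimension $20$ it is dense in one of the ($20$-dimensional) components $\mathcal M_0$ of $\mathcal M$, and a dense constructible subset of an irreducible variety contains a dense open subset; this proves the proposition.

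For the bound $\dim\operatorname{Im}\mathfrak M_{\mathcal Y}\le 20$: the group $PGL(W)=PGL(6)$, of dimension $35$, acts on $LG_\eta(10,\wedge^3W)$ and preserves $LG^1_\eta(10,\wedge^3W)$, since for $g\in PGL(W)$ the automorphism $\wedge^3g$ of $\wedge^3W$ rescales $\eta$ and sends $T_U$ to $T_{gU}$, hence carries $D^A_k$ isomorphically (as a subscheme of $G(3,W)$) to $D^{gA}_k$. By the functoriality of the constructions of Proposition \ref{existence and smoothness of double cover} and Section \ref{proof} this induces an isomorphism $Y_A\cong Y_{gA}$ identifying the polarizations, so $\mathfrak M_{\mathcal Y}$ is constant on $PGL(W)$-orbits. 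For general $[A]$ the stabilizer of $A$ in $PGL(W)$ is finite --- the locus of $A$ fixed by a nontrivial reductive subgroup is a proper closed subvariety of $LG_\eta(10,\wedge^3W)$, the fact underlying O'Grady's analysis of EPW sextics, see \cite{EPWMichigan,Ogrady-IHS}. Hence the fiber of $\mathfrak M_{\mathcal Y}$ through a general point contains a $35$-dimensional orbit and $\dim\operatorname{Im}\mathfrak M_{\mathcal Y}\le 55-35=20$.

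The reverse inequality follows if $\mathfrak M_{\mathcal Y}$ separates general $PGL(W)$-orbits, and for this I would show that $(Y_A,\xi)$ determines $A$ up to $PGL(W)$, in three steps. (i) The polarization $\xi$ (whose complete linear system is, by Riemann--Roch, $20$-dimensional) defines precisely the morphism $Y_A\to D^A_2\hookrightarrow\PP(\wedge^3W)$, where $f\colon Y_A\to D^A_2$ is the double cover; this requires identifying $H^0(Y_A,\xi)$ with $H^0(D^A_2,\oo(1))=H^0(\PP(\wedge^3W),\oo(1))$, a cohomology computation on the resolution $\tilde D^A_2$ (using its canonical class and exceptional divisor from Lemmas \ref{canonical class for LG bundle} and \ref{lem 2.5}) together with the linear normality of $D^A_2$ in $\PP(\wedge^3W)$. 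In particular the embedded variety $D^A_2\subset\PP(\wedge^3W)$ is intrinsic to $(Y_A,\xi)$. (ii) One recovers $G(3,W)\subset\PP(\wedge^3W)$ from $D^A_2$ as the common zero locus of the quadrics containing $D^A_2$, using that $H^0(\mathcal I_{D^A_2}(2))$ consists exactly of the Pl\"ucker quadrics of $G(3,W)$; hence a polarized isomorphism $(Y_A,\xi)\cong(Y_{A'},\xi')$ induces an automorphism of $G(3,W)$, i.e.\ an element of $PGL(W)\rtimes\Z/2$. (iii) One recovers $A$ from $D^A_2$ itself: by Corollary \ref{cor of 2.9} its singular locus is $D^A_3$, and for $[U]\in D^A_3$ the tangent cone of $D^A_2$ at $[U]$ is a cone over a Veronese surface in a $\PP^5\subset\PP(T_U)$ (again Corollary \ref{cor of 2.9}), from which one reads off the plane $\PP(A\cap T_U)\subset\PP(T_U)$; as $[U]$ varies over $D^A_3$ these planes span $A$. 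Finally the Grassmann duality $G(3,W)\cong G(3,W^\vee)$ preserves $LG^1_\eta(10,\wedge^3W)$ and intertwines $A\mapsto D^A_2$ with its dual, so the residual $\Z/2$ only enlarges the fibers of $\mathfrak M_{\mathcal Y}$ by finite sets. Therefore the general fiber equals $PGL(W)\cdot A$ up to a finite set, has dimension $35$, and $\dim\operatorname{Im}\mathfrak M_{\mathcal Y}=20$.

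I expect the reconstruction of step (iii) --- equivalently, ruling out ``exotic'' Lagrangians $A'$ with $D^{A'}_2$ projectively equal to $D^A_2$ --- to be the main obstacle, resting on two genericity facts each needing a separate argument: that the quadrics through $D^A_2$ are no more than the Pl\"ucker quadrics of $G(3,W)$, and that the planes $\PP(A\cap T_U)$, $[U]\in D^A_3$, span $\PP(A)$. A natural way to settle them is a cohomology computation on $\tilde D^A_2$ for the first and a direct check on a sufficiently special $A$ --- for instance one arising from an $S_A^{[3]}$ as in Section \ref{special-special} --- for the second, using that both are open conditions on $[A]$. Alternatively one could bypass reconstruction and prove $\dim\operatorname{Im}\mathfrak M_{\mathcal Y}=20$ infinitesimally, by showing that the Kodaira--Spencer map of $\mathcal Y\to LG^1_\eta(10,\wedge^3W)$ is surjective modulo the image of $\mathfrak{pgl}(W)$ at one point; that, however, requires an equally delicate control of $H^1(Y_A,T_{Y_A})$.
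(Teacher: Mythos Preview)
Your overall strategy coincides with the paper's: show that $(Y_A,\xi)$ determines $A$ up to $Aut(G(3,W))\cong\Z/2\times PGL(W)$, then invoke O'Grady's stability result to get finite stabilizers and conclude via the count $55-35=20$. The difference lies in how step~(ii) is carried out.

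For step~(ii) the paper does \emph{not} compute $H^0(\mathcal I_{D^A_2}(2))$ or try to show that the Pl\"ucker quadrics exhaust the quadrics through $D^A_2$. Instead it proves directly (Lemma~\ref{intersection of grassmannians}) that any $\mathfrak g\in PGL(\wedge^3W)$ with $D^A_2\subset G(3,W)\cap\mathfrak g(G(3,W))$ must satisfy $\mathfrak g(G(3,W))=G(3,W)$. The argument is purely numerical: if the two Grassmannians were distinct, an irreducible component $X$ of their intersection containing $D^A_2$ would be cut out of each $G_i$ by quadrics, and one rules out each possible codimension of $X$ by comparing degrees. For codimension $3$ one has $\deg D^A_2=480>336=8\cdot\deg G(3,W)$; for codimensions $1,2$ one restricts to the $4$-dimensional sub-Grassmannian quadrics $F(1,5)\subset G(3,W)$ and again compares degrees, in the codimension-$1$ case finishing with Donagi's characterization of the orbit $\Omega\subset\PP^{19}$ via secant lines. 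This avoids any cohomology on $\tilde D^A_2$; your route via quadrics is plausible but, as you note, needs separate control of $H^0(\mathcal I_{D^A_2}(2))$, and the paper's degree argument is the cheaper substitute.

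On step~(iii) you have correctly put your finger on the delicate point. The paper's stated claim is that one can take $\mathfrak g(A_1)=A_2$, but its written argument stops at $\mathfrak g\in Aut(G(3,W))$ with $\mathfrak g(D^2_{A_1})=D^2_{A_2}$; the passage from $D^A_2$ back to $A$ is left implicit there as well. Your tangent-cone reconstruction via Corollary~\ref{cor of 2.9} is a natural way to supply it. Likewise your step~(i) is handled more explicitly than in the paper, which simply asserts that a polarized isomorphism $Y_{A_1}\cong Y_{A_2}$ yields a projective linear isomorphism $D^2_{A_1}\cong D^2_{A_2}$ inside $\PP(\wedge^3W)$.
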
 
For the proof we will need the following lemma.
\begin{lem}  \label{intersection of grassmannians} Let $A\in LG_{\eta}^1(10,\wedge^3 W)$. If $\mathfrak{g}\in PGL(\wedge^3 W)$ is such that $D^A_2\subset G(3,W) \cap \mathfrak{g}(G(3,W))$,  then $G(3,W)=\mathfrak{g}(G(3,W))$.  
\end{lem}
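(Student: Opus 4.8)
The plan is to exploit the fact that $D^A_2$ is a nondegenerate, irreducible sixfold in $\PP(\wedge^3W)$ of large enough degree, together with the classification of the linear automorphisms of $\PP^{19}$ preserving $G(3,W)$. First I would record that, by Lemma \ref{irreducible D2}, $D^A_2$ is integral of dimension $6$, and — as computed in Section \ref{invariants} — it has degree $480>1$, so in particular $D^A_2$ is not contained in any hyperplane of $\PP(\wedge^3W)$; indeed $D^A_2$ spans $\PP^{19}$ since its ideal, being generated by the $9\times 9$ minors of a symmetric map built from the Plücker coordinates, contains no linear forms (one may also argue that $G(3,W)$ itself is linearly normal and $D^A_2$ is an ample divisor-type locus on it, hence also spans). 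The key point extracted from the hypothesis is then: $\mathfrak g$ is a projective linear automorphism of $\PP(\wedge^3W)$ carrying the linear span of $\mathfrak g^{-1}(D^A_2)\subset G(3,W)$ to the linear span of $D^A_2$; since both spans are all of $\PP^{19}$, $\mathfrak g$ is a well-defined element of $PGL(\wedge^3W)$ whose restriction matters only through its action on $G(3,W)$.

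The heart of the argument is a dimension/secant count showing that $D^A_2$ together with $\mathfrak g(G(3,W))$ already forces $\mathfrak g(G(3,W))$ to contain $G(3,W)$, and then an appeal to irreducibility and equal dimension. Concretely, I would argue as follows. The tangent space to $G(3,W)$ at a point $[U]$ is $\PP(T_U)$, and for $[U]\in D^A_2$ generic the locus $D^A_2$ is smooth at $[U]$ (Corollary \ref{cor of 2.9}), so $\mathfrak g(G(3,W))$, being smooth and containing the smooth sixfold $D^A_2$ through $[U]$, has tangent space at $[U]$ containing $T_{[U]}D^A_2$. I would then show that as $[U]$ ranges over $D^A_2$ the tangent spaces $T_{[U]}D^A_2$ span, at each point, enough of $\PP(\wedge^3W)$ that the eight-dimensional smooth variety $\mathfrak g(G(3,W))$ through $[U]$ must be tangent to $G(3,W)$ to high order; more robustly, I would use that two distinct smooth $8$-folds in $\PP^{19}$ of the same degree and Hilbert polynomial as $G(3,W)$ cannot share an ample sixfold. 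The cleanest route: $G(3,W)$ is cut out by quadrics (the Plücker relations), so $\mathfrak g(G(3,W))$ is cut out by the translated quadrics $\mathfrak g(q)$; the condition $D^A_2\subset \mathfrak g(G(3,W))$ says every $\mathfrak g(q)$ vanishes on $D^A_2$. Since $D^A_2$ spans $\PP^{19}$ and $D^A_2\subset G(3,W)$, I must show that the only quadrics through $D^A_2$ are the Plücker quadrics through $G(3,W)$, i.e. $H^0(\mathcal I_{D^A_2}(2))=H^0(\mathcal I_{G(3,W)}(2))$; granting this, $\mathfrak g$ permutes the space of Plücker quadrics, hence $\mathfrak g(G(3,W))$, being their common zero locus, equals $G(3,W)$.

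So the real work reduces to the quadric-normality statement $H^0(\PP^{19},\mathcal I_{D^A_2}(2))=H^0(\PP^{19},\mathcal I_{G(3,W)}(2))$. I would prove $\supseteq$ trivially and $\subseteq$ by restricting to the resolution $\tilde D^A_2$ and the embedding $\tilde\iota:\tilde D^A_2\hookrightarrow \tilde{\mathbb D}^A_2$: using $\tilde\iota^*H=4h$ and the description of $\Pic \tilde D^A_2$, a quadric through $D^A_2$ pulls back to a section of $\mathcal O(8h)$ vanishing on $\tilde D^A_2$, and one computes that this linear system on $G(3,W)$ restricted along $D^A_2$ (an ample sixfold, so restriction of $H^0(\mathcal O_{G}(2))$ to it is injective by Kodaira vanishing / the long exact sequence) has the expected codimension, namely exactly the Plücker quadrics. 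Alternatively, and perhaps more safely, I would use Lemma \ref{restriction of quadrics to K} and the transversality of Proposition \ref{transversality}: since $\iota(G(3,W))$ meets $\mathbb D^A_k$ transversally, $D^A_2$ is not contained in any smaller Grassmannian-like locus, and a quadric through it that is not Plücker would, by the local analysis in Lemma \ref{quadricsC_v}, have to vanish on all tangent cones $\mathcal C_{U_0}$, hence on all of $G(3,W)$, forcing it to be Plücker after all. The main obstacle I anticipate is precisely this quadratic-normality claim: bounding $h^0(\mathcal I_{D^A_2}(2))$ from above requires either a Koszul/vanishing computation on $G(3,W)$ or a careful use of the tangent-cone description, and care is needed because $D^A_2$ is singular along $D^A_3$, so one must either work on the resolution or check that the singularities impose no extra quadrics. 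Once that is in hand, the conclusion $G(3,W)=\mathfrak g(G(3,W))$ is immediate.
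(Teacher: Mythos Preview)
Your approach is genuinely different from the paper's, and the reduction you propose is clean: if one could show that the only quadrics in $\PP^{19}$ vanishing on $D^A_2$ are the Pl\"ucker quadrics of $G(3,W)$, then indeed $\mathfrak g$ would have to permute these quadrics and hence preserve their common zero locus $G(3,W)$. By contrast, the paper argues by a case analysis on the codimension of the component $X$ of $G(3,W)\cap\mathfrak g(G(3,W))$ containing $D^A_2$: codimension $3$ is ruled out because a complete intersection of three quadric sections of $G(3,W)$ has degree $336<480=\deg D^A_2$; codimension $2$ is ruled out by slicing with a sub-Grassmannian quadric $F(1,5)$ and comparing degrees; and in codimension $1$ the paper invokes Donagi's description of the orbit $\Omega$ as the locus of points on more than one secant line to $G(3,W)$, showing $\Omega_1\subset\Omega_2$ and hence $G_1=\operatorname{Sing}(\Omega_1)=\operatorname{Sing}(\Omega_2)=G_2$.

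The gap in your proposal is exactly the one you flag: the quadratic normality statement $H^0(\mathcal I_{D^A_2}(2))=H^0(\mathcal I_{G(3,W)}(2))$ is neither proven nor obviously true, and your two suggested routes do not establish it. The first route contains a confusion: the relation $\tilde\iota^*H=4h$ refers to the hyperplane class $H$ on the abstract Lagrangian Grassmannian $LG_\eta(10,\wedge^3W)$, not on $\PP^{19}$; a quadric in $\PP(\wedge^3W)$ restricts to $\mathcal O(2h)$ on $G(3,W)$, not $\mathcal O(8h)$, so no useful leverage comes from that identity. Moreover, the injectivity you need is $H^0(G,\mathcal I_{D^A_2/G}(2))=0$, which is not a Kodaira-type statement (there is no obvious ample twist of a canonical bundle here), and since $D^A_2$ is singular along $D^A_3$ and only of codimension $3$, there is no off-the-shelf vanishing theorem that gives it. The second route via tangent cones and Lemma~\ref{quadricsC_v} is too vague to evaluate: knowing that a quadric vanishes on $D^A_2$ does not force it to vanish on all tangent cones $\mathcal C_{U_0}$ for $[U_0]\in G(3,W)$, only for $[U_0]\in D^A_2$. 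Tellingly, the paper's own treatment of the codimension-$1$ case does \emph{not} exclude that $D^A_2$ lies in a quadric section of $G(3,W)$; instead it handles that possibility with the $\Omega$-argument. So unless you can supply an actual computation of $h^0(\mathcal I_{D^A_2/G}(2))$, your strategy remains incomplete.
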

\begin{proof}Let us denote by $G_1$, $G_2$ the varieties $G(3,W)$ and $\mathfrak{g} (G(3,W))$ respectively.

Let $X\subset G_1\cap G_2$ be an irreducible component of the intersection that contains $D^2_A$.
Then $X$ has codimension at most $3$ in both $G_1$ and $G_2$ and spans  $\PP^{19}$.  Furthermore it is contained in a complete intersection of quadric hypersurfaces on each $G_i$.
If $X$ has codimension $3$, then $X=D^A_2$ and lies in a complete intersection of three quadrics.  But the complete intersection has degree $8\cdot 42=336$, while $D^A_2$ has degree $480$, so this is impossible.  

For lower codimension of $X$ we first note that $D^A_2\subset D^A_1$.  Since  $[D^A_1]=[c_1( \mathcal{T}^{\vee})\cap G(3,W)]$ and $c_1( \mathcal{T}^{\vee})=4h$, the divisor $D^A_1$ is a quartic hypersurface section of $G_1$ and $G_2$. So %when $X$ has codimension two or one in the $G_i$, 
we may assume that $D^A_2$ is contained in a quartic hypersurface section of $X$.

Consider the following subvariety in $G_1$:  Let $V_5\subset W$ be a general $5$-dimensional subspace, and let $V_1$ be a general $1$-dimensional subspace of $V_5$.
Let $F(1,5)=\{[U]\in G_1|V_1\subset U\subset V_5\}\subset G_1$ and denote by $P(1,5)$ the span of $F(1,5)$. %=\{[U]\in G_1|V_1\subset U\subset V_5\}\subset G_1$.
Then $F(1,5)$ is a $4$-dimensional smooth quadric and the span, $P(1,5)$, is a $\PP^5$.  

If $X$ has codimension $2$, then $X_{(1,5)}:=X\cap F(1,5)$ is an irreducible surface.  Furthermore,  $X_{(1,5)}$ is contained in at least $2$ quadric sections of $F(1,5)$.  So $X_{(1,5)}$ has degree at most $8$.
On the other hand $$D_{(1,5)}:=D^A_2\cap F(1,5)\subset X_{(1,5)} $$ is a curve of degree $56$, contained in a quartic hypersurface section of $X_{(1,5)}$, which has degree at most $32$.   
Since this is absurd, we may assume that $X$ has codimension one, i.e. is a divisor in the $G_i$.

Since $D^A_2$ spans $\PP^9$, the divisor $X$ must be a quadric hypersurface section of each $G_i$.   Then $P(1,5)\cap X$ is complete intersection of two quadrics, and through every point of $P(1,5)$ there are inifinitely many secants lines to $X$. 
The union of the spaces $P(1,5)$ as $V_5$ and $V_1$ varies is a variety $\Omega_1\subset \PP^{19}$,  characterized in  \cite[Lemma 3.3]{Donagi} as the locus of points in $\PP^{19}$ that lies on more than one secant line to $G_1$.  Furthermore $G_1$ is the singular locus of $\Omega_1$.  Similarly, $\Omega_2$ is defined with respect to $G_2$.  By the above argument each $P(1,5)$ in $\Omega_1$ is also contained in $\Omega_2$.  Thus  $\Omega_1\subset \Omega_2$.  But then they coincide, and since $G_i={\rm Sing}(\Omega_i)$, the two grassmannians $G_1$ and $G_2$ coincide.

 \end{proof}
 
\begin{proof}

We claim that $\mathfrak{M}_{\mathcal{Y}}([A_1])=\mathfrak{M}_{\mathcal{Y}}([A_2])$ if and only if there exists a linear automorphism $\mathfrak{g}\in Aut(G(3,W))\simeq \mathbb{Z}/2\times PGL(W)$ such that $\mathfrak{g}(A_1)= A_2$. 
Indeed, assume that $\mathfrak{M}_{\mathcal{Y}}([A_1])=\mathfrak{M}_{\mathcal{Y}}([A_2])$. Then $Y_{A_1}$ and $Y_{A_2}$, polarized by ample classes defining double covers to
$D^2_{A_1}$ and $D^2_{A_2}$  respectively, are isomorphic. It follows that there is a linear automorphism $\mathfrak{g}\in PGL(\wedge^3 W)$ such that $\mathfrak{g}(D^2_{A_1}) =D^2_{A_2}$. It follows that $D^2_{A_2}\subset  G(3,W)\cap \mathfrak{g}(G(3,W))$. By Lemma \ref{intersection of grassmannians}, we deduce that $G(3,W)=\mathfrak{g}(G(3,W))$. It follows that $\mathfrak{g}\in Aut(G(3,W))$.

By \cite{Ogrady-IHS} the locus $LG^1_\eta(10,\wedge^3W)$ is contained in the stable locus of the natural linearized $PGL(W)$ action on $LG_\eta(10,\wedge^3W)$.
From our claim we hence infer that 
$$\dim (\mathfrak{M}_{\mathcal{Y}}(LG^1_\eta(10,\wedge^3 W)))\geq  \dim LG_\eta^1(10,\wedge^3 W) - \dim (PGL(W))=55 - 35= 20.$$ 
But 20 is the dimension of $\mathcal{M}$, so our map is surjective onto an (also by stability) open subset of a component of $\mathcal{M}$ of dimension 20. 
\end{proof}
We conclude by determining the component of the moduli space that is filled by our family.  

Recall that for $v\in H^2((K3)^{[3]},\Z)$ the divisibility of $v$ is defined as the generator of the subgroup 
$(v,H^2((K3)^{[3]},\Z))\subset \Z$ where $(.,.)$ is the scalar product induced by the Beauville-Bogomolov form.
Note that for Beauville-Bogomolov degree $4$ there are two possible divisibilities for $H$ either $l=1$ or $2$ (see \cite[Proposition~3.6]{GritsenkoHulekSankaran}). It follows from \cite[Proposition~2.1(3) and Corollary~2.4]{Apostolov} that there are  exactly two components, distinguished by the divisibility, of the coarse moduli space of polarized  IHS %hyperk\"{a}her 
sixfolds of $K3^{[3]}$-type and Beauville-Bogomolov degree $4$.  Which one is determined by the following proposition, whose proof was pointed out to us by Kieran O'Grady.
\begin{prop} \label{divisibility}The image of $\mathfrak{M}_{\mathcal{Y}}$ is open and dense in the connected component of the coarse
moduli space of IHS 
%hyperk\"{a}her 
sixfolds of $K3^{[3]}$-type, Beauville-Bogomolov degree $4$ and divisibility 2; 
\end{prop}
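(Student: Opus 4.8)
The plan is to pin down the divisibility $l$ of the polarization carried by the sixfolds of the family $\mathcal{Y}$ and to show that $l=2$. By the previous proposition the image of $\mathfrak{M}_{\mathcal{Y}}$ is already known to be open and dense in a connected component of $\mathcal{M}$ of dimension $20$, while by the results of \cite{Apostolov} and \cite{GritsenkoHulekSankaran} recalled above, $\mathcal{M}$ has exactly two connected components, one for each value $l\in\{1,2\}$ of the divisibility of the polarization class. Since the divisibility is a deformation invariant, it suffices to compute it on a single fibre, and the convenient choice is the special fibre $Y_{A_0}$ of Section~\ref{proof}.

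First I would transport the question to $S_{A_0}^{[3]}$. In Section~\ref{proof} we showed that $Y_{A_0}$ is birational to $S_{A_0}^{[3]}$; a birational map of IHS manifolds is an isomorphism away from codimension two, hence induces an isomorphism $H^2(Y_{A_0},\mathbb{Z})\cong H^2(S_{A_0}^{[3]},\mathbb{Z})$ of Hodge structures which is an isometry for the Beauville--Bogomolov forms and takes the polarization $\xi$ of $Y_{A_0}$ to a class $L\in\mathrm{Pic}(S_{A_0}^{[3]})$. By Section~\ref{invariants} the class $\xi$ is primitive with $q(\xi)=4$, so $L$ is primitive with $q(L)=4$. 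As $A_0$ is chosen generic (in particular $K$ generic in $F_{[v_0]}$), the genus $6$ K3 surface $S_{A_0}$ is generic, so $\mathrm{Pic}(S_{A_0})=\mathbb{Z}H_S$ with $H_S^2=10$, and therefore
\[
H^2(S_{A_0}^{[3]},\mathbb{Z})=\Lambda\oplus\mathbb{Z}E,\qquad \mathrm{Pic}(S_{A_0}^{[3]})=\mathbb{Z}H\oplus\mathbb{Z}E,
\]
where $\Lambda=H^2(S_{A_0},\mathbb{Z})$ is the unimodular K3 lattice, $H$ is induced from $H_S$ (so $q(H)=10$), and $E$ is the primitive class with $2E$ the divisor of non-reduced subschemes, so that $q(E)=-4$ and $(H,E)=0$.

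It then remains to observe that \emph{every} primitive $L=aH+bE$ with $q(L)=10a^2-4b^2=4$ has divisibility $2$ in $H^2(S_{A_0}^{[3]},\mathbb{Z})$ (so in particular the class $2H-3E$ singled out in the Remark of Section~\ref{invariants} does). Indeed $5a^2-2b^2=2$ forces $a$ even, say $a=2a_1$, whence $b^2=10a_1^2-1$; reduction modulo $4$ then forces $a_1$, and hence also $b$, to be odd, while primitivity of $L$ gives $\gcd(a_1,b)=1$. Since $\Lambda$ is unimodular one has $(H,\Lambda)=\mathbb{Z}$, and $E^2=-4$, so
\[
\big(L,\,H^2(S_{A_0}^{[3]},\mathbb{Z})\big)=\gcd(a,4b)\,\mathbb{Z}=2\,\mathbb{Z},
\]
i.e. $l=\mathrm{div}(\xi)=2$. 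Together with the first paragraph this shows that $\mathrm{Im}(\mathfrak{M}_{\mathcal{Y}})$ is open and dense in the component of $\mathcal{M}$ of divisibility $2$, which is the assertion.

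I do not expect a genuine obstacle here once the earlier results of the paper are granted; the only non-trivial (but standard) input is that a birational map of IHS manifolds is an isomorphism outside codimension two, hence induces a Beauville--Bogomolov isometry on $H^2$ matching the two primitive polarization classes of square $4$. Given that, the statement reduces to the elementary computation in the rank-two lattice $\langle 10\rangle\oplus\langle-4\rangle$ sketched above.
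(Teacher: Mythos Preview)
Your argument is correct, but it follows a different route from the paper's.

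The paper works on a \emph{general} fibre $Y_A$ (not on $\Delta$) and exploits the covering involution $\iota$ of $Y_A\to D^A_2$ directly: since the fixed locus of $\iota$ is $3$-dimensional, $\iota$ is anti-symplectic; since the family has maximal moduli one may take $\mathrm{Pic}(Y_A)=\mathbb{Z}[P]$, so $\iota^*$ acts as $+1$ on $\mathbb{Z}[P]$ and $-1$ on $[P]^\perp$, i.e.\ $\iota^*v=-v+\tfrac12(v,[P])[P]$. Integrality of $\iota^*$ then forces $(v,[P])$ to be even for every integral $v$, hence $\mathrm{div}([P])=2$.

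Your approach instead transports the question to the special fibre and reduces to a purely lattice-theoretic computation in $\mathrm{Pic}(S_{A_0}^{[3]})=\mathbb{Z}H\oplus\mathbb{Z}E$, classifying all primitive classes of square $4$ and reading off their divisibility. This is more concrete and has the pleasant byproduct of pinning down the polarization class (up to sign and the Hodge isometry) as $2H-3E$, matching the Remark in Section~\ref{invariants}. Its cost is two extra ingredients you must import: (i) the standard fact that a birational map of IHS manifolds induces a Hodge isometry on integral $H^2$; and (ii) that for very general $[A_0]\in\Delta$ the associated K3 surface $S_{A_0}$ has Picard rank one, i.e.\ the assignment $A_0\mapsto S_{A_0}$ dominates the moduli of genus~$6$ K3 surfaces. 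The latter is true (it goes back to O'Grady's analysis of $\Delta$ in \cite{EPWMichigan}), but is not established in the present paper, so you should cite it explicitly. The paper's involution argument avoids both of these inputs and works uniformly on a very general fibre.
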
 
\begin{proof} By the above, it remains to compute the divisibility of our polarization. For this, fix $A$ general and denote the polarization by $P$. Observe that the involution of the double cover $Y_A \to D^2_A$
defined by the polarization is anti-symplectic. Indeed as an involution on an IHS manifold it is either symplectic or anti-symplectic, but the fixed point locus of a symplectic
involution is a symplectic manifold (see \cite[Proposition 3]{Camere}) whereas the fixed locus of our involution is of dimension 3. This means that the involution must be anti-symplectic . 
Moreover, since we proved that our family is of maximal dimension, we may assume that $Y_A$ has Picard group spanned by the polarization $P$. It follows that the action of the involution on $H^2(Y_A)$ has an invariant subspace spanned by the class $[P]$. Furthermore, the involution 
respects the Beauville-Bogomolov bilinear form $(.,.)$. Thus, since $([P],[P])=4$, the involution on $H^2(Y_A)$ is of the form 
$$v\mapsto -v+\frac{1}{2}(v,[P])[P].$$
 Since the involution must map integral cohomology to integral cohomology, it follows that $(v,[P])$ is even for all integral classes $v$. This implies that the divisibility of $[P]$ is not equal to 1. We infer that it is equal to  2.   
\end{proof}

\bibliography{biblio} \bibliographystyle{alpha}

\begin{thebibliography}{LLSvS15}

\bibitem[Apo11]{Apostolov}
Apostol Apostolov.
\newblock Moduli spaces of polarized irreducible symplectic manifolds are not
  necessarily connected.
\newblock {\em arXiv:1109.0175}, 2011.

\bibitem[AR04]{AR}
Alberto Alzati and Francesco Russo.
\newblock Some extremal contractions between smooth varieties arising from
  projective geometry.
\newblock {\em Proc. London Math. Soc. (3)}, 89(1):25--53, 2004.

\bibitem[BD85]{BeauvilleDonagi}
Arnaud Beauville and Ron Donagi.
\newblock La vari\'et\'e des droites d'une hypersurface cubique de dimension
  {$4$}.
\newblock {\em C. R. Acad. Sci. Paris S\'er. I Math.}, 301(14):703--706, 1985.

\bibitem[Bea83]{Beauville}
Arnaud Beauville.
\newblock Vari\'et\'es {K}\"ahleriennes dont la premi\`ere classe de {C}hern
  est nulle.
\newblock {\em J. Differential Geom.}, 18(4):755--782 (1984), 1983.

\bibitem[Cam12]{Camere}
Chiara Camere.
\newblock Symplectic involutions of holomorphic symplectic four-folds.
\newblock {\em Bull. Lond. Math. Soc.}, 44(4):687--702, 2012.

\bibitem[Don77]{Donagi}
Ron~Y. Donagi.
\newblock On the geometry of {G}rassmannians.
\newblock {\em Duke Math. J.}, 44(4):795--837, 1977.

\bibitem[DV10]{DV}
Olivier Debarre and Claire Voisin.
\newblock Hyper-k\"ahler fourfolds and {G}rassmann geometry.
\newblock {\em J. Reine Angew. Math.}, 649:63--87, 2010.

\bibitem[GHS07]{GHS}
V.~A. Gritsenko, K.~Hulek, and G.~K. Sankaran.
\newblock The {K}odaira dimension of the moduli of {$K3$} surfaces.
\newblock {\em Invent. Math.}, 169(3):519--567, 2007.

\bibitem[GHS10]{GritsenkoHulekSankaran}
V.~Gritsenko, K.~Hulek, and G.~K. Sankaran.
\newblock Moduli spaces of irreducible symplectic manifolds.
\newblock {\em Compos. Math.}, 146(2):404--434, 2010.

\bibitem[Har70]{HartshorneLN}
Robin Hartshorne.
\newblock {\em Ample subvarieties of algebraic varieties}.
\newblock Lecture Notes in Mathematics, Vol. 156. Springer-Verlag, Berlin-New
  York, 1970.
\newblock Notes written in collaboration with C. Musili.

\bibitem[Har77]{Hartshorne}
Robin Hartshorne.
\newblock {\em Algebraic geometry}.
\newblock Springer-Verlag, New York-Heidelberg, 1977.
\newblock Graduate Texts in Mathematics, No. 52.

\bibitem[Huy99]{Huybrechts}
Daniel Huybrechts.
\newblock Compact hyper-{K}\"ahler manifolds: basic results.
\newblock {\em Invent. Math.}, 135(1):63--113, 1999.

\bibitem[IR01]{IlievRanestad}
Atanas Iliev and Kristian Ranestad.
\newblock {$K3$} surfaces of genus 8 and varieties of sums of powers of cubic
  fourfolds.
\newblock {\em Trans. Amer. Math. Soc.}, 353(4):1455--1468, 2001.

\bibitem[Isk77]{Iskovskih}
V.~A. Iskovskih.
\newblock Fano threefolds. {I}.
\newblock {\em Izv. Akad. Nauk SSSR Ser. Mat.}, 41(3):516--562, 717, 1977.

\bibitem[Ito03]{Ito}
Tetsushi Ito.
\newblock Birational smooth minimal models have equal {H}odge numbers in all
  dimensions.
\newblock In {\em Calabi-{Y}au varieties and mirror symmetry ({T}oronto, {ON},
  2001)}, volume~38 of {\em Fields Inst. Commun.}, pages 183--194. Amer. Math.
  Soc., Providence, RI, 2003.

\bibitem[Kap14]{GKapustkab23}
Grzegorz Kapustka.
\newblock On ihs fourfolds with $b_2=23$.
\newblock {\em arXiv:1403.1074, The Michigan Mathematical Journal 65 (1), 3-33}, 2014.

\bibitem[LLSvS15]{LehnLehnSorgervanStraten}
Christian Lehn, Manfred Lehn, Christoph Sorger, and Duco van Straten.
\newblock Twisted cubics on cubic fourfolds.
\newblock {\em J. reine angew. Math.}, 2015.

\bibitem[Mor82]{Mori}
Shigefumi Mori.
\newblock Threefolds whose canonical bundles are not numerically effective.
\newblock {\em Ann. of Math.}, 116:133--176, 1982.

\bibitem[Muk92]{M1}
Shigeru Mukai.
\newblock Polarized {$K3$} surfaces of genus {$18$} and {$20$}.
\newblock In {\em Complex projective geometry ({T}rieste, 1989/{B}ergen,
  1989)}, volume 179 of {\em London Math. Soc. Lecture Note Ser.}, pages
  264--276. Cambridge Univ. Press, Cambridge, 1992.

\bibitem[Muk93]{MukaiGrassmannian}
Shigeru Mukai.
\newblock Curves and {G}rassmannians.
\newblock In {\em Algebraic geometry and related topics ({I}nchon, 1992)},
  Conf. Proc. Lecture Notes Algebraic Geom., I, pages 19--40. Int. Press,
  Cambridge, MA, 1993.

\bibitem[Muk06]{M2}
Shigeru Mukai.
\newblock Polarized {$K3$} surfaces of genus thirteen.
\newblock In {\em Moduli spaces and arithmetic geometry}, volume~45 of {\em
  Adv. Stud. Pure Math.}, pages 315--326. Math. Soc. Japan, Tokyo, 2006.

\bibitem[Muk10]{Mukai}
Shigeru Mukai.
\newblock Curves and symmetric spaces, {II}.
\newblock {\em Ann. of Math. (2)}, 172(3):1539--1558, 2010.

\bibitem[Muk12]{M3}
Shigeru Mukai.
\newblock K3 surfaces of genus 16.
\newblock {\em RIMS 1743}, 2012.

\bibitem[O'G06]{Ogrady-IHS}
Kieran~G. O'Grady.
\newblock {Irreducible symplectic 4-folds and {Eisenbud}-{Popescu}-{Walter}
  sextics}.
\newblock {\em Duke Math. J.}, 134(1):99--137, 2006.

\bibitem[O'G10]{OGradyTaxonomy}
Kieran~G. O'Grady.
\newblock Epw-sextics: taxonomy.
\newblock {\em arXiv:1007.3882 [math.AG]}, 2010.

\bibitem[O'G13]{EPWMichigan}
Kieran~G. O'Grady.
\newblock Double covers of {E}{P}{W}-sextics.
\newblock {\em Michigan Math. J.}, 62:143--184, 2013.

\bibitem[PR97]{PragaczRatajski}
P.~Pragacz and J.~Ratajski.
\newblock Formulas for {L}agrangian and orthogonal degeneracy loci;
  {$Q$}-polynomial approach.
\newblock {\em Compositio Math.}, 107(1):11--87, 1997.

\bibitem[Pra91]{PragaczLN1478}
Piotr Pragacz.
\newblock Algebro-geometric applications of {S}chur {$S$}- and
  {$Q$}-polynomials.
\newblock In {\em Topics in invariant theory ({P}aris, 1989/1990)}, volume 1478
  of {\em Lecture Notes in Math.}, pages 130--191. Springer, Berlin, 1991.

\end{thebibliography}
\end{document}